\renewcommand{\subsection}[1]{\vspace{.18in}\par\noindent\addtocounter{subsection}{1}\setcounter{equation}{0}{\bf\thesubsection.\hspace{5pt}#1}}
\theoremstyle{definition}
\newtheorem{Def}[subsection]{Definition}%[section]
\newtheorem{Bsp}[subsection]{Example}%[section]
\newtheorem{Rem}[subsection]{Remark}
\newtheorem{Rems}[subsection]{Remarks}
\theoremstyle{plain}
\newtheorem{Prop}[subsection]{Proposition}
\newtheorem{Thm}[subsection]{Theorem}
\newtheorem{Lem}[subsection]{Lemma}
\newtheorem{Coro}[subsection]{Corollary}
 \numberwithin{equation}{subsection}
\newcommand{\sg}{\sigma}
\newcommand{\tu}{{\tilde u}}
\newcommand{\bin}{\bigcup}
\newcommand{\bla}{\boldsymbol{\lambda}}
\newcommand{\be}{\boldsymbol{e}}
\newcommand{\il}{{i_0}}
\newcommand{\ile}{{i_0+1}}
\newcommand{\lk}{\left[}
\newcommand{\rk}{\right]}
\newcommand{\kong}{\varnothing}
\newcommand{\jiao}{\bigcap}
\newcommand{\han}{\subseteq}
\def\leq{\leqslant}\def\geq{\geqslant}
\newcommand{\hl}{\widehat L}
\newcommand{\soc}{\mathrm{soc}}
\newcommand{\rhol}{\rho_{r+l',r}}
\newcommand{\diag}{\mathrm{diag}}
\newcommand{\dt}{\delta}
\newcommand{\Dt}{\Delta}
\newcommand{\map}{\mapsto}
\newcommand{\vs}{\varepsilon}
\newcommand{\og}{\omega}
\newcommand{\up}{\upsilon}
\newcommand{\ep}{\varepsilon}
\newcommand{\al}{\alpha}
\newcommand{\bt}{\beta}
\newcommand{\h}{\widehat}
\newcommand{\ti}{\widetilde}
\newcommand{\zr}{\zeta_r}
\newcommand{\s}{\sigma}
\newcommand{\ot}{\otimes}
\newcommand{\ttk}{\mathtt{k}}
\newcommand{\ttF}{\mathtt{F}}
\newcommand{\tte}{\mathtt{e}}
\newcommand{\ttf}{\mathtt{f}}
\newcommand{\fS}{\frak S}
\newcommand{\fL}{\frak L}
\newcommand{\fM}{\frak M}
\newcommand{\fI}{\frak I}
\newcommand{\sZ}{\mathcal Z}
\newcommand{\sH}{\mathcal H}
\newcommand{\bm}{\bigm}
\newcommand{\ba}{\bar}
\newcommand{\ol}{\overline}
\newcommand{\lra}{\longrightarrow}
\newcommand{\ra}{\rightarrow}
\newcommand{\la}{\lambda}
\newcommand{\La}{\Lambda}
\newcommand{\lrb}{\overline{\Lambda(n,r)}_{l'}}
\newcommand{\lb}{\overline{\lambda}}
\newcommand{\mb}{\overline{\mu}}
\newcommand{\nb}{\overline{\nu}}
\newcommand{\mbn}{\mathbb N}
\newcommand{\mbq}{\mathbb Q}
\newcommand{\mbz}{\mathbb Z}
\newcommand{\bfd}{\mathbf{d}}
\newcommand{\bfz}{\mathbf{z}}
\newcommand{\bft}{\mathbf{t}}
\newcommand{\bfU}{\mathbf{U}}
\newcommand{\ttp}{\mathtt{p}}
\newcommand{\Ext}{\operatorname{Ext}}
\newcommand{\Hom}{\operatorname{Hom}}
\newcommand{\sB}{\mathcal{B}}
\def\sfK{{\mathsf K}}
\def\sfX{{\mathsf X}}
\newcommand{\hlr}{\widehat L_h}
\begin{document}
\title{Representations of little $q$-Schur algebras}
\author{Jie Du, Qiang Fu and Jian-pan Wang}
\address{School of Mathematics, University of New South Wales,
Sydney 2052, Australia.} \email{j.du@unsw.edu.au\ \,\, {\it Home
page:} \tt http://web.maths.unsw.edu.au/$\sim$jied}
\address{Department of Mathematics, Tongji University, Shanghai, 200092, China.}
\email{q.fu@hotmail.com}
\address{Department of Mathematics, East China Normal University, Shanghai, 200062, China.}
\email{jpwang@ecnu.edu.cn}
\thanks{Supported by the Natural Science Foundation of China (Grant No. 10971154), the Program NCET and the UNSW FRG}
\date{\today}
\sloppy \maketitle
\begin{abstract}
In \cite{DFW} and \cite{Fu07}, little $q$-Schur algebras were introduced as
homomorphic images of the infinitesimal quantum groups. In this paper,
we will investigate representations of these algebras. We will classify simple  modules for little
$q$-Schur algebras and classify semisimple little $q$-Schur algebras.
Moreover, through the classification of the blocks of little $q$-Schur algebras
for $n=2$, we will determine little $q$-Schur algebras of finite representation type in the odd roots of unity case.
%Also, we shall show that the epimorphism over
%$\mbq(\up)$ from the quantum enveloping algebra
%$\bfU(\frak{sl}_n)$ to a $q$-Schur algebra can be taken up to any
%field via the quantum Kostant $\mbz[\up,\up^{-1}]$-form of
%$\bfU(\frak{sl}_n)$.
\end{abstract}

\section{Introduction}

The $q$-Schur algebras are certain finite dimensional algebras which were used by Jimbo in the establishment of the quantum Schur--Weyl reciprocity \cite[Prop.~3]{JI} and were introduced by Dipper and James in
\cite{DJ1},\cite{DJ2} in the study the representations of Hecke algebras and finite general linear groups. Using a geometric setting for
$q$-Schur algebras, Belinson, Lusztig and  MacPherson \cite{BLM}
reconstructed (or realized) the quantum enveloping algebra $\bfU(n)$ of $\frak{gl}_n$ as a limit of $q$-Schur algebras over $\mathbb Q(\upsilon)$. This results in an explicit description of the
epimorphism $\zr$ from $\bfU(n)$ to the $q$-Schur algebra $\bfU(n,r)$ for all $r\geq 0$.
Restriction induces an epimorphism from the Lusztig form $U_\sZ(n)$ over $\sZ=\mbz[\up,\up^{-1}]$ to the integral $q$-Schur algebra $U_\sZ(n,r)$ \cite{Du} and, in particular, an epimorphism from $U_k(n)$ to $U_k(n,r)$
by specializing the parameter to any root of unity in a field $k$. The little $q$-Schur algebras $\tu_k(n,r)$ are defined as the homomorphic images of the finite dimensional Hopf subalgebra $\tu_k(n)$ of $U_k(n)$ under $\zeta_r$.
The structure of these algebras are investigated by the authors (\cite{DFW},\cite{Fu07}). For example,
through a BLM type realization for $\tu_k(n)$, various bases for $\tu_k(n,r)$ were constructed and dimension formulas were given. This paper is a continuation of \cite{DFW} and \cite{Fu07}.

It should be point out that, through the coordinate algebra approach,  Doty, Nakano and Peters in \cite{DNP1} defined infinitesimal
Schur algebras, closely related to the Frobenius kernels of an
algebraic group over a field of positive characteristic. A theory
for the quantum version of the infinitesimal Schur algebras was
studied by Cox in \cite{Cox} and \cite{Cox00}.  The
relation between the algebra structures of the infinitesimal and
the little $q$-Schur algebra was investigated in \cite{Fu05}.
Indeed, we see that the relation between the little and the
infinitesimal $q$-Schur algebra is similar to that between the
$h$-th Frobenius kernel $G_h$ and the corresponding Jantzen
subgroup $G_hT$. However, there is a subtle difference between infinitesimal $q$-Schur algebras and little $q$-Schur a1gebras.

Suppose $\ep$ is an $l'$-th root of 1 in a field $k$ and define $l=l'$ if $l'$ is
odd, and $l=l'/2$, if $l'$ is even.
 The parameter involved for defining $G_h$, $G_hT$ and the infinitesimal $q$-Schur algebras is $q=\ep^2$ which is always an $l$-th root of 1. So their representations are independent of $l'$ (cf. \cite[3.1]{Don} or \ref{the relation between various irreducible module} below). However, the parameter used for defining
little $q$-Schur algebras is $\ep$ which is a square root of $q$.\footnote{For this reason, little $q$-Schur algebras should probably be more accurately renamed as little $\sqrt{q}$-Schur algebras.} The structures and representations of $\tu_k(n)$ and little $q$-Schur algebras {\it do } depend on $l'$ and are quite different (cf. \cite[5.11]{L2} or \ref{the irreducible module of u(n)} below). In fact, the interesting case is the even case, where simple representations of $\tu_k(n)$ are indexed by $(\mathbb Z_{l'})^n$ and not every simple module can be obtained as a restriction of a simple module of $U_k(n)$ with an $l$-restricted highest weight. In contrast with the algebraic group case, this is a kind of ``quantum phenomenon''. Moreover, it should be noted that the representation theory of quantum enveloping algebras at the even roots of unity
has found some new applications in the conformal field theory (or the theory of vertex operator algebras).
See, e.g., \cite{GSTF1} and \cite{KS}.

We will show that every simple module of $\tu_k(n)$ is a restriction of a simple  $G_1T$-module.
To achieve this, we first classify simple $\tu_k(n,r)$-modules through the ``sandwich'' relation
$u_k(n,r)_1\subseteq\tu_k(n,r)\subseteq s_k(n,r)_1$ given in \eqref{sandwich}. By introducing the baby transfer map (cf. \cite{L00}), we will see a simple $\tu_k(n,r)$-module for $n\geq r$ and $l'$ odd is either an inflation of a simple
$\tu_k(n,r-l')$ via the module transfer map or a lifting of a simple module of the Hecke algebra via the $q$-Schur functor.
Main results of the paper also include the classifications of semisimple little $q$-Schur algebras and, when $l'$ is odd, the finite representation type of little $q$-Schur algebras.

We organize the paper as follows. We recall the definition for the
infinitesimal quantum groups $\tu_k(\frak g)$ associated with a simple Lie algebra $\frak g$ of a simply-laced type and $\tu_k(n)$ associated with $\frak {gl}_n$ in \S2. In \S3, we shall study the
baby Weyl module for infinitesimal quantum group $\tu_k(\frak g)$.  We shall prove in \ref{the little
weyl module of u'(g)} that for the restricted weight, the
corresponding baby Weyl module is equal to the Weyl module. This is a well-known fact.
Furthermore, using this result we can give another proof of
\cite[7.1(c)(d)]{L1}.  In \S4, we shall recall
some results about the little and the infinitesimal $q$-Schur
algebra and establish the sandwich relation mentioned above. Moreover, classifications of simple $G_h$- and $G_hT$-modules from  \cite{Don}, \cite{Cox} and \cite{Cox00} will be mentioned.
 In \S5, we shall study the
baby Weyl module for the infinitesimal quantum group $\tu_k(n)$ and give the classification of simple  module for
the little $q$-Schur algebra $\tu_k(n,r)$. The baby transfer maps are discussed in \S6. In \S7,
we shall classify semisimple little
$q$-Schur algebras, while in \S8 we classify the finite representation type of little $q$-Schur algebras
 through the classification of the blocks of little
$q$-Schur algebras for $n=2$. Finally, in an Appendix, we show that
the epimorphism from $U_\sZ(n)$ onto $U_\sZ(n,r)$ remains surjective when restricted to the
 Lusztig form  $U_\sZ(\frak {sl}_n)$ of the quantum $\mathfrak{sl}_n$. Thus,
 the results developed in \S3 for $\mathfrak{sl}_n$ can be directly used in \S5.

Throughout, let $\upsilon$ be an indeterminate and let
$\sZ=\mathbb Z[\upsilon,\upsilon^{-1}]$. Let $k$ be a field
containing a primitive $l'$th root $\ep$ of $1$ with $l'\geq 3$.
Let $l>1$ be defined by
$$l=
\begin{cases}
l'&\text{if $l'$ is
odd},\\
l'/2&\text{if $l'$ is even}.
\end{cases}$$
Thus, $\ep^2$ is always a  primitive $l$th root of 1.
Specializing $\up$ to $\ep$, $k$ will be viewed as a
$\sZ$-module.

For a  finite dimensional algebra $A$ over $k$, let $\text{Mod}(A)$ be the category of finite dimensional left $A$-modules. If $B$ is a quotient algebra of $A$, then the {\it inflation functor} embeds $\text{Mod}(B)$ into $\text{Mod}(A)$ as a full subcategory.

\section{Lusztig's infinitesimal quantum enveloping algebras}
In this section, following \cite{L2}, we recall the definition for
the infinitesimal quantum group.

Let $\frak g$ be a semisimple complex Lie algebra associated with an
indecomposable positive definite symmetric Cartan matrix
$C=(a_{ij})_{1\leq i,j\leq n}$.

\begin{Def} \label{the definition of U(g)}
The quantum enveloping algebra of $\frak g$ is the algebra
$\bfU(\frak g)$ over $\mathbb Q(\upsilon)$ generated by the
elements
$$E_i,\ F_i,\ {\sfK}_i,\ \sfK_i^{-1}\ (1 \leq i \leq n)$$
subject to the following relations $:$
\begin{itemize}
\item[(QG1)]
$\ \sfK_{i}\sfK_{j}=\sfK_{j}\sfK_{i},\ \sfK_{i}\sfK_{i}^{-1}=1;$
\item[(QG2)]
$\ \sfK_iE_j=\upsilon^{a_{ij}}E_j\sfK_i,\ \sfK_iF_j=\upsilon^{-a_{ij}}
F_j\sfK_i;$
\item[(QG3)]
$\ E_iF_j-F_jE_i=\delta_{ij}\frac
{\sfK_{i}-\sfK_{i}^{-1}}{\upsilon-\upsilon^{-1}};$
\item[(QG4)]
$\ E_iE_j=E_jE_i,\ F_iF_j=F_jF_i,$ if $a_{ij}=0;$
\item[(QG5)]
$\ E_i^2E_j-(\upsilon+\upsilon^{-1})E_iE_jE_i+E_jE_i^2=0,$ if $a_{ij}=-1;$
\item[(QG6)]
$\ F_i^2F_j-(\upsilon+\upsilon^{-1})F_iF_jF_i+F_jF_i^2=0,$ if $a_{ij}=-1.$
\end{itemize}
\end{Def}

Definition \ref{the
definition of U(g)} implies immediately the following result.
\begin{Lem}\label{the algebra automorphism sigma on Uupsilon(g)}
There is a unique $\mbq(\up)$-algebra automorphism $\s$ on
$\bfU(\frak g)$ satisfying
$$\s(E_i)=F_i,\ \s(F_i)=E_i,\ \s(\sfK_i)=\sfK_i^{-1}.$$
\end{Lem}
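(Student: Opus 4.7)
The plan is to define $\s$ on the generators by the three prescribed formulas and verify that these assignments extend to a $\mbq(\up)$-algebra endomorphism of $\bfU(\frak g)$ by checking, one at a time, that each of the defining relations (QG1)--(QG6) is sent to a consequence of those same relations. Uniqueness will then be immediate from the fact that an algebra homomorphism out of $\bfU(\frak g)$ is determined by its values on the generating set $\{E_i,F_i,\sfK_i^{\pm 1}\}$, and the automorphism property will follow from the observation that $\s^2$ fixes each generator and so equals $\id$.

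For the relation-by-relation check I would work through the list as follows. Applying $\s$ to (QG1) produces $\sfK_i^{-1}\sfK_j^{-1}=\sfK_j^{-1}\sfK_i^{-1}$ and $\sfK_i^{-1}\sfK_i=1$, both trivial consequences of (QG1). Applying $\s$ to $\sfK_iE_j=\up^{a_{ij}}E_j\sfK_i$ gives $\sfK_i^{-1}F_j=\up^{a_{ij}}F_j\sfK_i^{-1}$, which, after multiplying on both sides by $\sfK_i$, is precisely the second identity in (QG2); a symmetric argument handles the first. Applying $\s$ to (QG3) yields
$$F_iE_j-E_jF_i=\dt_{ij}\frac{\sfK_i^{-1}-\sfK_i}{\up-\up^{-1}},$$
which is just (QG3) with $i$ and $j$ interchanged once one absorbs the sign into the commutator and uses $\dt_{ij}=\dt_{ji}$. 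Finally, the Serre-type relations (QG4)--(QG6) are manifestly symmetric under the interchange $E_i\leftrightarrow F_i$, so $\s$ maps the $E$-versions to the $F$-versions and vice versa.

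Given these checks, the universal property of the presentation in Definition~\ref{the definition of U(g)} supplies a unique $\mbq(\up)$-algebra endomorphism $\s\colon\bfU(\frak g)\to\bfU(\frak g)$ acting on generators as stated. Since $\s^2$ satisfies $E_i\mapsto F_i\mapsto E_i$, $F_i\mapsto E_i\mapsto F_i$, and $\sfK_i\mapsto\sfK_i^{-1}\mapsto\sfK_i$, we conclude $\s^2=\id$, so $\s$ is an involutive automorphism. I do not expect any real obstacle in this proof; the only point requiring a moment of attention is the sign bookkeeping in (QG3), where the minus sign introduced by inverting the Cartan part is compensated by the swap of the commutator.
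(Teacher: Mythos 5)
Your proof is correct and is exactly the routine verification the paper has in mind: the paper simply states that Lemma~\ref{the algebra automorphism sigma on Uupsilon(g)} ``follows immediately'' from the presentation in Definition~\ref{the definition of U(g)}, and your relation-by-relation check of (QG1)--(QG6), together with uniqueness on generators and $\s^2=\id$, is precisely that argument spelled out. No gaps; the sign handling in (QG3) via $\dt_{ij}=\dt_{ji}$ and swapping the commutator is handled correctly.
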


For any integers
$c,t$ with $t\geq 1$, let $[c]=\frac{\up^c-\up^{-c}}{\up-\up^{-1}} \in\sZ,$ $[t]^{!}=[1][2]\cdots[t],$ and
$$\bigg [ { c \atop t} \bigg ]=\prod _{s=1} ^{t}\frac
{\upsilon^{c-s+1}-\upsilon^{-c+s-1}}
{\upsilon^{s}-\upsilon^{-s}}\in\sZ.$$
If we put $[0]^{!}=0=\left [ { c \atop 0} \right ]$, then $\left[ { c \atop t} \right]=[c]!/[t]![c-t]!$ for  $c\geq t\geq 0$, and $\left[ { c \atop t} \right]=0$ for $t>c\geq
0$.

Let $ U_\sZ(\frak g)$
(resp., $ U_\sZ^+(\frak g), U_{\sZ}^-(\frak g))$ be the
$\sZ$-subalgebra of $\bfU(\frak g)$ generated by the elements
$E_i^{(N)}=E_i^N/[N]!,$ $F_i^{(N)}=F_i^N/[N]!$, and $\sfK_j^{\pm 1}$
$(1\leq i\leq n-1,\ 1\leq j\leq n,\ N\geq 0)$ (resp., $E_i^{(N)},\
F_i^{(N)})$. Let $\mathit U_\sZ^0(\frak g)$ be the
$\sZ$-subalgebra of $\bfU(\frak g)$ generated by all $\sfK_i^{\pm 1}$
and $\big[ {\sfK_i;0 \atop t}\big]$, where for $t\in\mathbb N$ and
$c\in\mathbb Z$,
$$\bigg[ {\sfK_i;c \atop t} \bigg] =
         \prod_{s=1}^t \frac
         {\sfK_i\upsilon^{c-s+1}-\sfK_i^{-1}\upsilon^{-c+s-1}}{\upsilon^s-\upsilon^{-s}}.$$
%Consider the symmetric bilinear form (\ ,\ ) on $\mathbb Z^n$
%(with canonical basis $\Pi=\{\alpha_1,\cdots,\alpha_n\}$) given by
%$(\alpha_i,\alpha_j)=a_{ij}.$

%Let $R=R({C})=\{\alpha\in \mathbb Z^n|(\alpha,\alpha)=2\}.$ Then
%$R$ is a root system in $\mathbb Z^n$ with set of simple roots
%$\Pi$.

%Any root $\alpha\in R$ defines a reflection
%$s_\alpha:\mathbb Z^n\rightarrow Z^n, z\mapsto
%z-(z,\alpha)\alpha.$ We shall write $s_i$ instead of
%$s_{\alpha_i}(1\leq i\leq n)$. Let $W$ be the Weyl group of
%$R,$ it is the subgroup of $GL(\mathbb Z^n)$ generated by
%the reflections $s_i(1\leq i\leq n).$

%Let $R^+$ (resp., $R^-)$ be the set of positive (resp.,
%negative) roots in $R$ with respect to the set of simple
%roots $\Pi$.

\begin{Prop}
The following identities hold in $U_\sZ(\frak g)$.
\begin{equation}\label{the communicative formula Fi in Uupsilon(g)}
F_i^{(N)}F_j^{(M)}=\sum_{N-M\leq s\leq
N}(-1)^{s+N-M}\left[{s-1\atop
N-M-1}\right]F_i^{(N-s)}F_j^{(M)}F_i^{(s)},
\end{equation}
\begin{equation}\label{the communicative formula Fi1 in Uupsilon(g)}
F_j^{(M)}F_i^{(N)}=\sum_{N-M\leq s\leq
N}(-1)^{s+N-M}\left[{s-1\atop
N-M-1}\right]F_i^{(s)}F_j^{(M)}F_i^{(N-s)},
\end{equation}
 where $N>M\geq 0$ and
$a_{ij}=-1$.
\end{Prop}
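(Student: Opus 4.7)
The plan is to establish \eqref{the communicative formula Fi in Uupsilon(g)} by induction and then derive \eqref{the communicative formula Fi1 in Uupsilon(g)} as an immediate consequence. For the derivation I would use the $\mbq(\up)$-algebra anti-automorphism $\tau\colon\bfU(\frak g)\to\bfU(\frak g)$ determined on generators by $\tau(E_i)=E_i$, $\tau(F_i)=F_i$, $\tau(\sfK_i)=\sfK_i^{-1}$. This is well-defined because the defining relations (QG1)--(QG6) are stable under reversal combined with $\sfK_i\leftrightarrow\sfK_i^{-1}$, as a direct check shows (e.g., reversing $\sfK_iE_j=\up^{a_{ij}}E_j\sfK_i$ gives $E_j\sfK_i=\up^{a_{ij}}\sfK_iE_j$, which after $\sfK_i\mapsto\sfK_i^{-1}$ becomes (QG2) again). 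Since $\tau$ fixes each divided power $F_i^{(N)}$, applying $\tau$ to \eqref{the communicative formula Fi in Uupsilon(g)} sends the left-hand side to $F_j^{(M)}F_i^{(N)}$ and each summand $F_i^{(N-s)}F_j^{(M)}F_i^{(s)}$ to $F_i^{(s)}F_j^{(M)}F_i^{(N-s)}$, yielding \eqref{the communicative formula Fi1 in Uupsilon(g)}.

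For \eqref{the communicative formula Fi in Uupsilon(g)} itself I would induct on $M$. The case $M=0$ is trivial: only the $s=N$ term survives and evaluates to $F_i^{(N)}$. The base case $M=1$ asserts $F_i^{(N)}F_j=F_iF_jF_i^{(N-1)}-F_jF_i^{(N)}$, the divided-power form of the quantum Serre relation (QG6); I would prove it by a secondary induction on $N\geq 2$, the case $N=2$ being (QG6) itself after division by $[2]$, and the step $N\Rightarrow N+1$ being powered by the multiplication identity $F_iF_i^{(r)}=[r+1]F_i^{(r+1)}$.

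For the step $M-1\Rightarrow M$ I would use $F_j^{(M-1)}F_j=[M]F_j^{(M)}$ to split $F_i^{(N)}F_j^{(M)}=[M]^{-1}\bigl(F_i^{(N)}F_j^{(M-1)}\bigr)F_j$, invoke the induction hypothesis on the bracketed factor, then apply the $M=1$ relation to each of the resulting trailing $F_i^{(s)}F_j$ subproducts, and finally collapse adjacent divided powers of $F_i$ via $F_i^{(a)}F_i^{(b)}=\bigl[{a+b\atop a}\bigr]F_i^{(a+b)}$ and reindex. The main obstacle will be the coefficient bookkeeping: proving that after this collapse the coefficient at $F_i^{(N-s)}F_j^{(M)}F_i^{(s)}$ is exactly $(-1)^{s+N-M}\bigl[{s-1\atop N-M-1}\bigr]$. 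This reduces to a Gaussian-binomial identity of Vandermonde type, which I would handle via the $\up$-Pascal rule $\bigl[{a\atop b}\bigr]=\up^{-b}\bigl[{a-1\atop b}\bigr]+\up^{a-b}\bigl[{a-1\atop b-1}\bigr]$ together with an alternating-sum telescoping to collapse the inner summation.
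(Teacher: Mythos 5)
Your reduction of \eqref{the communicative formula Fi1 in Uupsilon(g)} to \eqref{the communicative formula Fi in Uupsilon(g)} via the anti-automorphism $\tau$ (fixing $E_i,F_i$ and inverting $\sfK_i$) is fine. The problems are in your proof of \eqref{the communicative formula Fi in Uupsilon(g)} itself. First, the $M=1$ specialization is not $F_i^{(N)}F_j=F_iF_jF_i^{(N-1)}-F_jF_i^{(N)}$: the $s=N$ term of the sum carries the coefficient $(-1)^{2N-1}\left[{N-1\atop N-2}\right]=-[N-1]$, so the correct statement is $F_i^{(N)}F_j=F_iF_jF_i^{(N-1)}-[N-1]F_jF_i^{(N)}$; e.g.\ for $N=3$ one computes from (QG6) that $F_i^3F_j=[3]F_iF_jF_i^2-[2]F_jF_i^3$, i.e.\ $F_i^{(3)}F_j=F_iF_jF_i^{(2)}-[2]F_jF_i^{(3)}$. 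Your secondary induction on $N$ therefore targets a false identity and breaks down already at $N=3$. Second, the main induction step on $M$ does not close with the tools you allow yourself: after writing $F_i^{(N)}F_j^{(M)}=[M]^{-1}\bigl(F_i^{(N)}F_j^{(M-1)}\bigr)F_j$, applying the induction hypothesis, and rewriting each trailing $F_i^{(s)}F_j$ by the (corrected) $M=1$ relation, you obtain, besides the good terms $F_i^{(N-s)}F_j^{(M)}F_i^{(s)}$, terms of the shape $F_i^{(N-s)}F_j^{(M-1)}F_iF_jF_i^{(s-1)}$. In these the single $F_i$ is wedged between $F_j^{(M-1)}$ and $F_j$: there are no adjacent divided powers of $F_i$ to collapse, and neither \eqref{the communicative formula Fi in Uupsilon(g)} nor \eqref{the communicative formula Fi1 in Uupsilon(g)} is applicable (they require the $F_i$-exponent to exceed the $F_j$-exponent, which fails here), nor are they yet available inductively in that range. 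So the argument stalls before the coefficient bookkeeping you flagged as the ``main obstacle'' even begins; a genuine direct proof of these higher-order quantum Serre relations needs either an auxiliary family of commutation identities or extra structure, and cannot be run on (QG6) plus divided-power collapsing alone.

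For comparison, the paper does not prove the identities from scratch at all: it quotes the corresponding identities for the $E_i$'s from \cite[2.5(a),(b)]{L2} and applies the algebra automorphism $\s$ of \ref{the algebra automorphism sigma on Uupsilon(g)} ($E_i\mapsto F_i$, $F_i\mapsto E_i$, $\sfK_i\mapsto\sfK_i^{-1}$), which transports them verbatim to \eqref{the communicative formula Fi in Uupsilon(g)} and \eqref{the communicative formula Fi1 in Uupsilon(g)}. If you wish to avoid citing Lusztig, you must supply a correct proof of the higher-order Serre relations themselves, which is a substantially more delicate induction than the one you sketch.
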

\begin{proof}
Applying the algebra automorphism $\s$ given in \ref{the algebra
automorphism sigma on Uupsilon(g)} to \cite[2.5(a),(b)]{L2}, we
get the desired formulas.
\end{proof}

Regarding the field $k$ as a
$\sZ$-algebra by specializing $\upsilon$ to $\ep$, we will write $[t]_\ep$ and $\lk{c\atop t}\rk_\ep$ for the images of $[t]$ and $\lk{c\atop t}\rk$ in $k$, and define, following \cite{L2}, the $k$-algebras  $U_k^+(\frak g)$,
$U_k^-(\frak g)$, $U_k^0(\frak g)$, and $U_k(\frak g)$ by applying the functor
$(\ )\otimes_\sZ k$ to $U_\sZ^+(\frak g)$, $U_\sZ^-(\frak g)$,
$U_\sZ^0(\frak g)$, and $U_\sZ(\frak g)$.   We will denote the images of
$E_i$, $F_i$, etc. in $U_k(\frak g)$ by the same letters.

Let $\tu_k^+(\frak g)$, $\tu_k^-(\frak g)$, $\tu_k^0(\frak g)$,
and $\tu_k(\frak g)$ be the $k$-subalgebras of $U_k(\frak g)$
generated respectively by the elements $E_i\ (1\leq i\leq n)$,
$F_i\ (1\leq i\leq n)$, $\sfK_i^{\pm 1}\ (1\leq i\leq n)$, and $E_i$,
$F_i$, $\sfK_i^{\pm 1}\ (1\leq i\leq n)$. By a proof similar to \cite[Th.~2.5]{DFW},
$\tu_k(\frak g)$ can be presented by generators $E_i$,
$F_i$, $\sfK_i^{\pm 1}\ (1\leq i\leq n)$ and the relations (QG1)--(QG6) together with
$$E_i^l=0=F_i^l,\quad \sfK_i^{2l}=1.$$

 The algebra $\tu_k(\frak
g)$ is called {\it the infinitesimal quantum group} associated with
$\frak{g}$.
When $l'=l$ is odd, we will also call the algebra $u_k(\frak g)=\tu_k(\frak g)/\langle
\sfK_1^{l}-1,\cdots,\sfK_n^{l}-1\rangle$, considered in \cite{L2},  an infinitesimal quantum group.

For the reductive complex Lie algebra $\frak{g}=\frak{gl}_n$, we now modify the definitions above
to introduce infinitesimal quantum $\frak{gl}_n$ which will be used to define little $q$-Schur algebras in \S4.

Let $\bfU(n)=\bfU(\frak{gl}_n)$ be the quantum enveloping
algebra of $\frak{gl}_n$ which is a slightly modified version of
Jimbo \cite{JI}; see \cite[3.2]{T}. It is generated by the
elements $E_i,F_i$ $(1\leq i\leq n-1)$ and $K_i^{\pm 1}$ $(1\leq i\leq
n)$ subject to the relations as given in \cite[Def.~2.1]{DFW}.

Let $\ti{K}_i=K_iK_{i+1}^{-1}$ for $1\leq i\leq n-1$. Then
the subalgebra $'\bfU(n)$ of $\bfU(n)$ generated by the $E_i,F_i$ and
$\ti{K}_i$ $(1\leq i\leq n-1)$ is isomorphic to the quantum enveloping algebra
$\bfU(\mathfrak{sl}_n)$. By identifying $\ti{K}_i$ with $\sfK_i$, we will identify
$'\bfU(n)$ with $\bfU(\mathfrak{sl}_n)$.

Following \cite{T}, let $U_\sZ(n)$ (resp.,
$U_\sZ^+(n),\ U_\sZ^-(n)$) be the $\sZ$-subalgebra of $\bfU(n)$
generated by all $E_i^{(m)}$, $F_i^{(m)}$, $K_i$ and $\left[
{K_i;0 \atop t} \right]$, (resp., $E_i^{(m)}$, $F_i^{(m)}$). Let
$U_\sZ^0(n)$ be the $\sZ$-subalgebra of $\bfU(n)$ generated by all
$K_i$ and $\left[ {K_i;0 \atop t} \right]$.
Replacing $K_i$ by $\ti K_i$, we may defined integral forms
$'U_\sZ(n)$, which is identified with $U_\sZ(\mathfrak{sl}_n)$, and define ${}'U_\sZ^0(n)$  and ${}'U_\sZ^\pm(n)=U_\sZ^\pm(n)$ similarly.

Let $U_k(n)=U_\sZ(n)\ot_\sZ k$ and $'U_k(n)={}'U_\sZ(n)\ot_\sZ k$. Since
$'U_\sZ(n)$ is a pure $\sZ$-submodule of $U_\sZ(n)$ (see \cite[Prop.~2.6]{Du}),
$'U_k(n)$ is a subalgebra of $U_k(n)$ identified with $U_k(\mathfrak{sl}_n)$.

Following \cite{L2}, let $\tu_k(n)$ be the $k$-subalgebra of $U_k(n)$ generated by the
elements $E_i$, $F_i$, $K_i^{\pm 1}$ ($1\leq i\leq n$).  Let
$\tu_k^+(n)$, $\tu_k^0(n)$, $\tu_k^-(n)$ be the
$k$-subalgebra of $\tu_k(n)$ generated respectively by the
elements $E_i$ $(1\leq i\leq n-1)$, $K_j^{\pm 1}$ $(1\leq j\leq n)$,
$F_i$ $(1\leq i\leq n-1)$.  We shall denote the images of $E_i$,
$F_i$, etc. in $U_k(n)$, $\tu_k(n)$  by the same letters. In
the case of $l'$ being an odd number, let
$$u_k(n)=\tu_k(n)/\langle K_1^l-1,\cdots,K_n^l-1\rangle.$$

Similarly, we can define $'\tu_k(n)$ etc. as subalgebras of $'U_k(n)$, which are identified with
$\tu_k(\mathfrak{sl}_n)$ etc.

\section{Baby Weyl modules}% of $\tilde u_k(\frak g)$}
Following \cite[5.15]{Jan96} for $\bfd=(d_1,\cdots,d_n)\in\mbn^n$ the
$\bfU(\frak g)$-module $V(\bfd)=\bfU(\frak g)/I(\bfd)$ is
irreducible where
$$I(\bfd)=\sum_{1\leq i\leq n}(\bfU(\frak
g)E_i+\bfU(\frak g)F_i^{d_i+1}+\bfU(\frak g)(\sfK_i-\up^{d_i})).$$
Let $x_0=1+I(\bfd)\in L(\bfd)$. Let $V_\sZ(\bfd)$ be the
$U_\sZ(\frak g)$-submodule of $V(\bfd)$ generated by $x_0$. Let
$V_k(\bfd)=V_\sZ(\bfd)\ot_\sZ k$. This is the Weyl module of
$U_k(\frak g)$ with highest weight $\bfd$. For convenience, we shall denote the image of
$x_0$ in $V_k(\bfd)$ by the same letter.
We call the $\tu_k(\frak g)$-module $V'_k(\bfd):=\tilde u_k(\frak g)x_0$ the {\it baby Weyl module} of $U_k(\frak g)$ (or the Weyl module of $\tu_k(\frak g)$).

\begin{Lem}\label{the communicative formula in u'(g)} Let $N\geq 0$ be an integer.
\begin{itemize}
\item[(1)] If $Y\in\tilde u_k(\frak g)$ is a monomial in the $F_i$'s, then
\begin{equation*}
F_i^{(N)}Y=\sum_{s\geq 0}X_sF_i^{(s)},\quad\text{ for some }X_s\in \tilde u_k^-(\frak g).
\end{equation*}

\item[(2)] If $Y\in \tilde u_k(\frak g)$ is a monomial in the $E_i$'s, then
have
\begin{equation*}
YE_i^{(N)}=\sum_{s\geq 0}E_i^{(s)}X_s,\quad\text{ for some }X_s\in \tilde u_k^+(\frak g).
\end{equation*}
\end{itemize}
\end{Lem}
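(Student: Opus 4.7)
The plan is to prove part (1) by induction on the length $p$ of $Y$ written as a word in the generators $F_j$, and then obtain part (2) by the symmetric argument, using the automorphism $\s$ of \ref{the algebra automorphism sigma on Uupsilon(g)} to transport \eqref{the communicative formula Fi1 in Uupsilon(g)} to an analogous commutation identity for the $E_i$'s. The base cases are immediate: if $p=0$ (so $Y=1$) take $X_N=1$ and $X_s=0$ otherwise; if $N=0$ then $F_i^{(0)}=1$ and one takes $X_0=Y$.

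For the inductive step of (1), I write $Y=F_jY'$ with $Y'$ a shorter monomial. If $a_{ij}=0$, then (QG4) shows $F_i^{(N)}$ commutes with $F_j$, so the inductive hypothesis applied to $F_i^{(N)}Y'$ gives
$$F_i^{(N)}Y = F_j\sum_s X'_s F_i^{(s)} = \sum_s (F_jX'_s)F_i^{(s)},$$
and $F_jX'_s\in\tu_k^-(\frak g)$. If $a_{ij}=-1$, I specialize \eqref{the communicative formula Fi in Uupsilon(g)} at $M=1$ (reading the formula modulo the specialization $\up\mapsto\ep$), which collapses to the two-term identity
$$F_i^{(N)}F_j = F_iF_jF_i^{(N-1)} - [N-1]_\ep F_jF_i^{(N)}$$
in $U_k(\frak g)$. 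Applying the inductive hypothesis separately to $F_i^{(N-1)}Y'$ and $F_i^{(N)}Y'$ and absorbing the prefixes $F_iF_j$ and $F_j$, which both lie in $\tu_k^-(\frak g)$, into the resulting coefficients yields the required expansion for $F_i^{(N)}Y$.

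For part (2) the induction runs on the length of $Y$, but now peeled off from the right as $Y=Y''E_j$. Applying $\s$ to \eqref{the communicative formula Fi1 in Uupsilon(g)} produces the companion identity
$$E_j^{(M)}E_i^{(N)} = \sum_{N-M\leq s\leq N}(-1)^{s+N-M}\lk{s-1\atop N-M-1}\rk E_i^{(s)}E_j^{(M)}E_i^{(N-s)},$$
which at $M=1$ and $a_{ij}=-1$ gives $E_jE_i^{(N)} = E_i^{(N-1)}E_jE_i - [N-1]_\ep E_i^{(N)}E_j$, while for $a_{ij}=0$ the two generators commute. Combined with the inductive hypothesis for $Y''E_i^{(N-1)}$ and $Y''E_i^{(N)}$, this yields $YE_i^{(N)}=\sum_s E_i^{(s)}X_s$ with $X_s\in\tu_k^+(\frak g)$, exactly by mirroring the argument of (1).

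The step that looks most like an ``obstacle'' is really just bookkeeping: one must check at each stage that the new coefficients stay inside $\tu_k^\pm(\frak g)$. This is automatic because the only factors introduced by the commutation identities are the generators $F_i,F_j$ (respectively $E_i,E_j$) themselves, which lie in $\tu_k^-(\frak g)$ (respectively $\tu_k^+(\frak g)$), so multiplying them onto the inductively produced $X_s$'s preserves membership. Note that the $F_i^{(s)}$ (respectively $E_i^{(s)}$) appearing on the right-hand side are \emph{not} asserted to lie in $\tu_k^-(\frak g)$; the statement is an equality in $U_k(\frak g)$, which is what the induction directly produces.
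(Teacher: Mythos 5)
Your proof is correct and follows essentially the same route as the paper: induction on the length of the monomial $Y$ driven by the commutation formula \eqref{the communicative formula Fi in Uupsilon(g)}, except that you peel off single generators (so only the $M=1$ instance, a two-term identity, is needed and the coefficients visibly stay in $\tilde u_k^{\mp}(\frak g)$), whereas the paper peels off divided-power blocks $F_j^{(M)}$ with $M<l$ and uses the observation $N-s\leq M<l$ for the same purpose. The only case left implicit --- exactly as in the paper's own proof --- is $j=i$, where $F_i^{(N)}$ simply commutes with $F_i$ (resp.\ $E_i^{(N)}$ with $E_i$), so nothing breaks.
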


\begin{proof} We only prove (1). The proof of (2) is similar.

Assume $Y=F_{j_1}^{(M_1)}F_{j_2}^{(M_2)}\cdots
F_{j_t}^{(M_t)}$ where $0\leq M_i<l$ for all $i$. We proceed by
induction on $t$.

Suppose $t=1$ and $M<l$. If $N\leq M$, then $F_i^{(N)}F_j^{(M)}\in
\tilde u_k^-(\frak g)$, where $j=j_1$. Hence the result follows by putting
$X_0=F_i^{(N)}F_j^{(M)}$. We now assume $0\leq M<N$. If
$a_{ij}=0$, then $F_i^{(N)}F_j^{(M)}=F_j^{(M)}F_i^{(N)}$ by the
definition \ref{the definition of U(g)}. If $a_{ij}=-1$, then, by
\eqref{the communicative formula Fi in Uupsilon(g)},
\begin{equation*}
F_i^{(N)}F_j^{(M)}=\sum_{N-M\leq s\leq
N}(-1)^{s+N-M}\left[{s-1\atop
N-M-1}\right]F_i^{(N-s)}F_j^{(M)}F_i^{(s)}=\sum_{N-M\leq s\leq N}X_sF_i^{(s)},
\end{equation*}
where $X_s=(-1)^{s+N-M}\left[{s-1\atop
N-M-1}\right]F_i^{(N-s)}F_j^{(M)}$ for $N-M\leq s\leq N$. Note
that if $N-M\leq s$ with $M< l$ then $N-s< l$.  Hence, $X_s\in \tilde u_k^-(\frak g)$.

Assume now $t>1$. Let $Y'=F_{j_1}^{(M_1)}F_{j_2}^{(M_2)}\cdots
F_{j_{t-1}}^{(M_{t-1})}$. Then by induction, we have
$$F_i^{(N)}Y'=\sum_{s\geq 0}X_sF_i^{(s)},$$
where $X_s\in \tilde u_k^-(\frak g)$. The previous argument show
that for each $s\geq 0$, we have
$$F_i^{(s)}F_{j_t}^{(M_t)}=\sum_{m\geq 0}Y_{s,m}F_i^{(m)},\text{ for some }Y_{s,m}\in \tilde u_k^-(\frak g).$$
 Let $X_m'=\sum_{s\geq
0}X_sY_{s,m}\in \tilde u_k^-(\frak g)$ for $m\geq 0$. Then
$$F_i^{(N)}Y=\sum_{m\geq 0}X_m'F_i^{(m)},$$
as required.
\end{proof}
\begin{Thm}\label{the little weyl module of u'(g)}
For $\bfd=(d_1,\cdots,d_n)\in \mathbb N^n$ with $0\leq d_i<l$ for all $i$,
we have $V'_k(\bfd)=V_k(\bfd)$.
\end{Thm}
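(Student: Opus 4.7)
The plan is to prove the nontrivial inclusion $V_k(\bfd)\subseteq V'_k(\bfd)$, the reverse being immediate from the definitions. Because $x_0$ is a highest weight vector on which $E_ix_0=0$ and $\sfK_i$ acts by $\ep^{d_i}$, the triangular decomposition of $U_k(\frak g)$ reduces the problem to showing $U_k^-(\frak g)x_0\subseteq\tu_k^-(\frak g)x_0$, i.e.\ that every monomial $F_{i_1}^{(N_1)}\cdots F_{i_t}^{(N_t)}x_0$ lies in $\tu_k^-(\frak g)x_0$. A key preliminary I would record first is the vanishing $F_i^{(N)}x_0=0$ whenever $N>d_i$; this holds in the generic Weyl module $V(\bfd)$ over $\mbq(\up)$ since $F_i^{d_i+1}\in I(\bfd)$ and $[N]!$ is invertible there, and it descends to $V_\sZ(\bfd)$ (a $\sZ$-lattice in $V(\bfd)$) and hence to $V_k(\bfd)$ by base change.

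With that in hand, I would induct on the length $t$ of the monomial. The case $t=0$ is trivial, and for $t=1$ the vector $F_i^{(N)}x_0$ is either $0$ or has $N\leq d_i<l$, placing $F_i^{(N)}$ already in $\tu_k^-(\frak g)$. For the inductive step, split the monomial as $F_{i_1}^{(N_1)}Y'$ with $Y'=F_{i_2}^{(N_2)}\cdots F_{i_t}^{(N_t)}$; by induction $Y'x_0=\sum_\alpha W_\alpha x_0$ with each $W_\alpha\in\tu_k^-(\frak g)$ a monomial in the $F_j^{(M)}$, $M<l$. Lemma \ref{the communicative formula in u'(g)}(1) then delivers
\[
F_{i_1}^{(N_1)}W_\alpha=\sum_{s\geq 0}X_{s,\alpha}F_{i_1}^{(s)},\qquad X_{s,\alpha}\in\tu_k^-(\frak g).
\]
Applying this to $x_0$ and invoking $F_{i_1}^{(s)}x_0=0$ for $s>d_{i_1}$ collapses the sum to the range $0\leq s\leq d_{i_1}<l$, on which $F_{i_1}^{(s)}\in\tu_k^-(\frak g)$. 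Hence every surviving summand $X_{s,\alpha}F_{i_1}^{(s)}x_0$ lies in $\tu_k^-(\frak g)x_0$, which closes the induction and yields $V_k(\bfd)\subseteq\tu_k^-(\frak g)x_0\subseteq V'_k(\bfd)$.

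I do not anticipate any serious obstacle: the restricted hypothesis $d_i<l$ does double duty, ensuring simultaneously that the only nonzero divided powers $F_i^{(s)}x_0$ already come from $\tu_k^-(\frak g)$ and that the commutation tails produced by Lemma \ref{the communicative formula in u'(g)}(1) get truncated to exactly this admissible range. The one point that deserves a careful sentence is the descent of the vanishing $F_i^{(N)}x_0=0$ for $N>d_i$ from the generic Weyl module to $V_k(\bfd)$, which is what allows the otherwise unbounded commutation sums to become finite and to consist only of elements of $\tu_k^-(\frak g)$.
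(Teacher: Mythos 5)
Your proof is correct and follows essentially the same route as the paper: both reduce to showing $U_k^-(\frak g)x_0\subseteq \tu_k^-(\frak g)x_0$, invoke Lemma \ref{the communicative formula in u'(g)}(1) to rewrite $F_i^{(N)}$ times a restricted monomial as $\sum_s X_sF_i^{(s)}$ with $X_s\in\tu_k^-(\frak g)$, and then truncate the sum using the vanishing of $F_i^{(s)}x_0$ beyond the restricted range (the paper phrases this as one-step stability of $\tu_k^-(\frak g)x_0$ under each $F_i^{(N)}$ rather than your explicit induction on monomial length, but this is the same argument). Your extra sentence on descending $F_i^{(s)}x_0=0$ for $s>d_i$ from the generic Weyl module through $V_\sZ(\bfd)$ to $V_k(\bfd)$ is exactly the justification the paper leaves implicit.
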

\begin{proof}By the definition of $V_k(\bfd)$ and $V'_k(\bfd)$,
we have $V_k(\bfd)=U_k^-(\frak g)x_0$ and $V'_k(\bfd)=\tilde
u_k^-(\frak g)x_0$. Since $U_k^-(\frak g)$ is generated by the
elements $F_i^{(N)}$ ($1\leq i\leq n,\ N\geq 0$), it is enough to
prove $F_i^{(N)}\tilde u_k^-(\frak g)x_0\han \tilde u_k^-(\frak
g)x_0$ for all $1\leq i\leq n$ with $N\geq 0$. For a monomial
$Y$ of $F_i$ in $\tilde u_k(\frak g)$, by \ref{the communicative
formula in u'(g)}(1), we have
\begin{equation}\label{the communicative formula in Vk(g)}
F_i^{(N)}Yx_0=\sum_{s\geq 0}X_sF_i^{(s)}x_0,
\end{equation}
where $X_s\in \tilde u_k^-(\frak g)$ and $N\geq 0$. Since $0\leq
d_i<l$ and $F_i^{(d_i+1)}x_0=0$ for all $i$, we have
$F_i^{(s)}x_0=0$ for $s\geq l$. By \eqref{the communicative
formula in Vk(g)}, we have
$$F_i^{(N)}Yx_0=\sum_{0\leq s<l}X_sF_i^{(s)}x_0\han \tilde u_k^-(\frak g)x_0.$$
Hence the result follows.
\end{proof}
Following \cite[4.6]{L1}, we say that a $U_k(\frak g)$-module  $V$
has type $\bf 1$ if $V=\{v\in V\mid \sfK_i^lv=v\ \text{for}\
i=1,\cdots,n\}$. Let $V$ be a $U_k(\frak g)$-module of type $\bf
1$. For any $\bfz=(z_1,\cdots,z_n)\in\mbz^n$, following
\cite[5.2]{L1}, we define the $\bfz$-weight space
$$V_{\bfz}=\left\{x\in V\mid \sfK_ix=\ep^{z_i}x,\
\lk{\sfK_i;0\atop l}\rk x=\lk{z_i\atop l}\rk_\ep x\ for \
i=1,\cdots,n\right\}.$$

\begin{Lem}\label{Ki;c l-Ki;c' l}
$($\cite[4.2]{L1}$)$Let $V$ be a $U_k(\frak g)$-module
and let $x\in V$ be such that $\sfK_ix=\ep^mx$ for some $m\in\mbz$.
Then for any $c,c'\in\mbz$ we have $\lk{\sfK_i;c\atop l}\rk
x-\lk{\sfK_i;c'\atop l}\rk x=\left(\lk{m+c\atop
l}\rk_\ep-\big[{m+c'\atop l}\big]_\ep\right)x\in\mbz x$.
\end{Lem}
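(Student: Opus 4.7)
The plan is to work inside the Kostant $\sZ$-form $U_\sZ^0(\frak g)$ and exploit that, for $t<l$, the element $\lk{\sfK_i;0\atop t}\rk$ becomes an honest Laurent polynomial in $\sfK_i$ after specialization, so that the $\sZ$-subring $\sH_{<l}\subseteq U_\sZ^0(\frak g)$ generated by $\sfK_i^{\pm 1}$ and $\lk{\sfK_i;0\atop t}\rk$ ($1\leq t<l$) acts on $x$ through scalars depending only on $m$.

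First I would verify the $\upsilon$-Pascal recursion
$$\lk{\sfK_i;c\atop t}\rk=\upsilon^{-t}\lk{\sfK_i;c-1\atop t}\rk+\sfK_i\upsilon^{c-t}\lk{\sfK_i;c-1\atop t-1}\rk$$
in $U_\sZ^0(\frak g)$ by a direct expansion of the defining product. A double induction on $t$ and on $|c''|$, using this recursion and its reverse, shows $\lk{\sfK_i;c''\atop t}\rk\in\sH_{<l}$ for every $c''\in\mbz$ and every $1\leq t<l$. Iterating the recursion $|c-c'|$ times and collecting the resulting $\sfK_i\cdot(\text{terms from }\sH_{<l})$ gives the key identity
$$\lk{\sfK_i;c\atop l}\rk-\upsilon^{-l(c-c')}\lk{\sfK_i;c'\atop l}\rk=R_{c,c'}\in\sH_{<l}.$$

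Second, I would check that every element of $\sH_{<l}$ acts on $x$ by a scalar computable via the substitution $\sfK_i\mapsto\ep^m$. For $t<l$ the $\sZ$-form relation $\prod_{s=1}^t(\upsilon^s-\upsilon^{-s})\lk{\sfK_i;0\atop t}\rk=\prod_{s=1}^t(\sfK_i\upsilon^{-s+1}-\sfK_i^{-1}\upsilon^{s-1})$ has, after specialization, an invertible left coefficient $\prod_{s=1}^t(\ep^s-\ep^{-s})\in k^\times$; hence $\lk{\sfK_i;0\atop t}\rk$ is realised as an explicit Laurent polynomial in $\sfK_i$ in $U_k^0(\frak g)$ and acts on $x$ by the scalar $\lk{m\atop t}\rk_\ep$. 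Consequently $R_{c,c'}\cdot x=(\phi_m(R_{c,c'})|_{\upsilon=\ep})\,x$, where $\phi_m\colon U_\sZ^0(\frak g)\to\sZ$ is the $\sZ$-algebra homomorphism sending $\sfK_i\mapsto\upsilon^m$ and $\lk{\sfK_i;0\atop t}\rk\mapsto\lk{m\atop t}\rk$. Applying $\phi_m$ to the iterated identity yields $\phi_m(R_{c,c'})=\lk{m+c\atop l}\rk-\upsilon^{-l(c-c')}\lk{m+c'\atop l}\rk$ in $\sZ$.

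In the odd case $l'=l$ one has $\ep^l=1$, so $\upsilon^{-l(c-c')}|_\ep=1$ and combining everything yields
$$\lk{\sfK_i;c\atop l}\rk x-\lk{\sfK_i;c'\atop l}\rk x=\left(\lk{m+c\atop l}\rk_\ep-\lk{m+c'\atop l}\rk_\ep\right)x$$
at once. The main obstacle is the even case $l=l'/2$ with $\ep^l=-1$, where $\upsilon^{-l(c-c')}|_\ep=(-1)^{c-c'}$ contributes a correction factor whenever $c$ and $c'$ differ in parity. This is resolved by invoking the weight-space identity $\lk{\sfK_i;0\atop l}\rk x=\lk{m\atop l}\rk_\ep x$ supplied by the paper's definition of $V_\bfz$: applying the iterated identity with $c'=0$ and using this extra relation recovers $\lk{\sfK_i;c\atop l}\rk x=\lk{m+c\atop l}\rk_\ep x$ outright, and the lemma then follows by subtraction.
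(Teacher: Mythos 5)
Your odd-case argument is correct, and since the paper offers no proof of this lemma (it only cites Lusztig, whose setting has $\ep^l=1$), what you have written is essentially a reconstruction of the standard argument: the Pascal identity for $\lk{\sfK_i;c\atop t}\rk$, the observation that for $t<l$ the element $\lk{\sfK_i;c\atop t}\rk$ acts on $x$ by the scalar $\lk{m+c\atop t}\rk_\ep$ because $\prod_{s=1}^{t}(\ep^s-\ep^{-s})$ is invertible in $k$, and the evaluation homomorphism $\phi_m$ all hold up, and together they yield $\lk{\sfK_i;c\atop l}\rk x-\ep^{-l(c-c')}\lk{\sfK_i;c'\atop l}\rk x=\bigl(\lk{m+c\atop l}\rk_\ep-\ep^{-l(c-c')}\lk{m+c'\atop l}\rk_\ep\bigr)x$. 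When $\ep^l=1$, i.e.\ $l'$ odd, this is exactly the lemma.

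The even case, however, contains a genuine gap. The lemma assumes only $\sfK_ix=\ep^mx$; the relation $\lk{\sfK_i;0\atop l}\rk x=\lk{m\atop l}\rk_\ep x$ that you invoke to remove the sign $(-1)^{c-c'}$ is not among the hypotheses --- it is part of the definition of the weight space $V_{\bfz}$, which comes after the lemma and is precisely where the lemma is applied, so importing it here amounts to assuming the essential part of what is to be proved. Moreover the gap cannot be closed: for $c-c'$ odd the statement as printed fails when $\ep^l=-1$. The left-hand side does not depend on the choice of admissible $m$ (admissible $m$'s differ by multiples of $l'=2l$), while the right-hand side does; concretely, for $\frak g=\frak{sl}_2$ and $l'=4$ (so $l=2$, $\ep^2=-1$), the highest weight vector $x_0$ of the Weyl module $V_k(4)$ satisfies $\sfK_1x_0=\ep^4x_0=\ep^0x_0$, yet $\lk{\sfK_1;0\atop 2}\rk x_0-\lk{\sfK_1;1\atop 2}\rk x_0=\bigl(\lk{4\atop 2}\rk_\ep-\lk{5\atop 2}\rk_\ep\bigr)x_0=4x_0\neq 0=\bigl(\lk{0\atop 2}\rk_\ep-\lk{1\atop 2}\rk_\ep\bigr)x_0$, contradicting the formula with the admissible value $m=0$. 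So the honest output of your method is: the identity holds as stated whenever $\ep^l=1$; for $l'$ even it holds for $c\equiv c'\pmod 2$, and for all $c,c'$ only under the additional hypothesis $\lk{\sfK_i;0\atop l}\rk x=\lk{m\atop l}\rk_\ep x$ --- which is exactly the situation $x\in V_\bfz$ in which the paper uses the lemma. You should present this as a correction or restriction of the statement rather than silently adding the hypothesis in the last step.
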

Using \ref{Ki;c l-Ki;c' l}, we see that $\lk{\sfK_i;c\atop l}\rk
x=\lk{z_i+c\atop l}\rk_\ep x$, $x\in V_\bfz,\ c\in\mbz$. Let
$\al(i)=(a_{1i},a_{2i},\cdots,a_{ni})\in\mbz^n$, $(1\leq i\leq
n)$. We define a partial order on $\mbz^n$ by $\bfz\leq
\bfz'\Leftrightarrow \bfz'-\bfz=\sum\limits_{i=1}^nc_i\al(i)$ for
some $c_1,\cdots,c_n\in\mbn$. This is a partial order on $\mbz^n$.
It is clear that we have the following lemma.
\begin{Lem}\label{the action of Ei and Fi on Vz}
Let $V$ be a $U_k(\frak g)$-module of type $\bf 1$.
Then for $N>0$, we have

$(1)$ $E_i^{(N)}V_\bfz\han V_{\bfz+N\al(i)}$

$(2)$ $F_i^{(N)}V_\bfz\han V_{\bfz-N\al(i)}$.
\end{Lem}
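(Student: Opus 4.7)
The plan is to verify directly that for $x\in V_\bfz$, the vector $E_i^{(N)}x$ satisfies both defining conditions of $V_{\bfz+N\al(i)}$, and similarly for part (2). Since the statement is flagged as ``clear'', the proof reduces to two routine commutation calculations, one for each generator of $U^0_k(\frak g)$.

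First I would handle the $\sfK_j$-eigenvalue condition. Iterating the relation (QG2) yields $\sfK_j E_i^{(N)}=\up^{Na_{ji}}E_i^{(N)}\sfK_j$, so for $x\in V_\bfz$,
\begin{equation*}
\sfK_j E_i^{(N)}x=\ep^{Na_{ji}}\ep^{z_j}E_i^{(N)}x=\ep^{z_j+Na_{ji}}E_i^{(N)}x,
\end{equation*}
which is the correct eigenvalue since $(\bfz+N\al(i))_j=z_j+Na_{ji}$.

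Next I would handle the $\bigl[{\sfK_j;0\atop l}\bigr]$ condition. The standard Lusztig-type identity derived from (QG2) (cf.\ \cite{L1}) reads
\begin{equation*}
\bigg[{\sfK_j;0\atop l}\bigg]E_i^{(N)}=E_i^{(N)}\bigg[{\sfK_j;Na_{ji}\atop l}\bigg].
\end{equation*}
Applying this to $x$ and then using Lemma \ref{Ki;c l-Ki;c' l} (with $m=z_j$, $c=Na_{ji}$, $c'=0$) together with $\bigl[{\sfK_j;0\atop l}\bigr]x=\bigl[{z_j\atop l}\bigr]_\ep x$ yields
\begin{equation*}
\bigg[{\sfK_j;Na_{ji}\atop l}\bigg]x=\bigg[{z_j+Na_{ji}\atop l}\bigg]_\ep x,
\end{equation*}
so that $\bigl[{\sfK_j;0\atop l}\bigr]E_i^{(N)}x=\bigl[{(\bfz+N\al(i))_j\atop l}\bigr]_\ep E_i^{(N)}x$. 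Combined with the previous step this shows $E_i^{(N)}x\in V_{\bfz+N\al(i)}$, proving (1). Part (2) is entirely parallel: one uses $\sfK_j F_i^{(N)}=\up^{-Na_{ji}}F_i^{(N)}\sfK_j$ and the analogous commutation
\begin{equation*}
\bigg[{\sfK_j;0\atop l}\bigg]F_i^{(N)}=F_i^{(N)}\bigg[{\sfK_j;-Na_{ji}\atop l}\bigg],
\end{equation*}
yielding the weight shift by $-N\al(i)$.

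There is no real obstacle here; the only thing one has to be mildly careful about is that the two conditions defining $V_\bfz$ are \emph{independent} when $\ep$ is a root of unity (this is precisely why the $\bigl[{\sfK_j;0\atop l}\bigr]$ term is part of the definition), so both must be checked separately. Both reduce immediately to the commutation formulas above and to Lemma \ref{Ki;c l-Ki;c' l}.
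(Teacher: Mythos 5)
Your proof is correct and is exactly the routine verification the paper has in mind when it calls the lemma ``clear'': the eigenvalue condition follows from iterating (QG2), and the $\lk{\sfK_j;0\atop l}\rk$ condition follows from the standard commutation $\lk{\sfK_j;0\atop l}\rk E_i^{(N)}=E_i^{(N)}\lk{\sfK_j;Na_{ji}\atop l}\rk$ together with Lemma \ref{Ki;c l-Ki;c' l}, which is precisely the observation the paper records immediately after that lemma. Nothing is missing, and the parallel treatment of the $F_i^{(N)}$ case is likewise fine.
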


By \cite[6.2]{L1}, for $\bfd=(d_1,\cdots,d_n)\in\mbn^n$,
$V_k(\bfd)$ has a unique maximal $U_k(\frak g)$-submodule
$W_k(\bfd)$. Let $L_k(\bfd)=V_k(\bfd)/W_k(\bfd)$. Then $L_k(\bfd)$
is a simple $U_k(\frak g)$-module. Similarly, the $\tilde u_k(\frak g)$-module
$V_k'(\bfd)$ has a unique maximal $\tilde u_k(\frak g)$ submodule
$W_k'(\bfd)$ by the proof of \cite[5.11]{L2}. Let $L_k'(\bfd)=V_k'(\bfd)/W_k'(\bfd)$. Then
$L_k'(\bfd)$ is a simple $\tilde u_k(\frak g)$-module.

Let $I^-$ be the ideal in $\tilde u_k^-(\frak g)$ spanned as a
$k$-vector space by the nonempty words in $F_i$, $1\leq i\leq n$.
\begin{Lem}\label{I^-x0}
Assume $\bfd\in\mbn^n$ with $d_i<l$ for all $i$. Let $x_0$ be the
highest weight vector of $V(\bfd)$. Then we have
$$I^-x_0=\sum_{\bfz<\bfd}V_k(\bfd)_\bfz.$$
\end{Lem}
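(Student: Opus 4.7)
The plan is to combine Theorem~\ref{the little weyl module of u'(g)} with the weight space decomposition of $V_k(\bfd)$. Because $0\leq d_i<l$ for every $i$, Theorem~\ref{the little weyl module of u'(g)} gives $V_k(\bfd)=V'_k(\bfd)=\tilde u_k^-(\frak g)x_0$, and using the vector-space decomposition $\tilde u_k^-(\frak g)=k\cdot 1+I^-$ one obtains
\[
V_k(\bfd)=kx_0+I^-x_0.
\]

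For the inclusion $I^-x_0\subseteq\sum_{\bfz<\bfd}V_k(\bfd)_\bfz$, I would take a spanning monomial $F_{j_1}^{(M_1)}\cdots F_{j_t}^{(M_t)}$ of $I^-$, with $t\geq 1$ and each $M_s\geq 1$. Since $x_0\in V_k(\bfd)_{\bfd}$, an iterated application of Lemma~\ref{the action of Ei and Fi on Vz}(2) lands this product in $V_k(\bfd)_{\bfd-\sum_s M_s\al(j_s)}$; as $\sum_s M_s\al(j_s)$ is a nonzero $\mbn$-combination of the $\al(i)$, the resulting weight is strictly less than $\bfd$ in the partial order defined before Lemma~\ref{the action of Ei and Fi on Vz}.

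For the reverse inclusion, I would take an arbitrary $v\in\sum_{\bfz<\bfd}V_k(\bfd)_\bfz$ and, using $V_k(\bfd)=kx_0+I^-x_0$, write $v=cx_0+w$ with $c\in k$ and $w\in I^-x_0\subseteq\sum_{\bfz<\bfd}V_k(\bfd)_\bfz$. Projecting onto the $\bfd$-weight space, which equals $kx_0$ by the standard fact that the highest weight space of the Weyl module is one-dimensional, forces $c=0$, hence $v=w\in I^-x_0$.

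The main obstacle is not the argument itself but justifying the two supporting facts: that $V_k(\bfd)$ is of type $\mathbf{1}$ so that Lusztig's weight-space definition applies, and that $V_k(\bfd)_\bfd=kx_0$. Both follow from the defining relations $E_ix_0=0$, $\sfK_ix_0=\up^{d_i}x_0$, $F_i^{d_i+1}x_0=0$ of $V(\bfd)$ together with the triangular decomposition of $U_k^-(\frak g)$, and should be invoked from the background material rather than reproved here.
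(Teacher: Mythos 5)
Your proposal is correct and follows essentially the same route as the paper: both rest on Theorem \ref{the little weyl module of u'(g)} to get $V_k(\bfd)=V_k'(\bfd)=kx_0+I^-x_0$, on the weight decomposition $V_k(\bfd)=kx_0\oplus\sum_{\bfz<\bfd}V_k(\bfd)_\bfz$ with $V_k(\bfd)_\bfd=kx_0$, and on the easy inclusion $I^-x_0\subseteq\sum_{\bfz<\bfd}V_k(\bfd)_\bfz$ (which you justify explicitly via Lemma \ref{the action of Ei and Fi on Vz}). The only cosmetic difference is that you eliminate the $x_0$-component by projecting onto the $\bfd$-weight space, whereas the paper compares dimensions of the two complements of $kx_0$; the two steps are interchangeable.
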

\begin{proof}It is clear that $I^-x_0\han
\sum_{\bfz<\bfd}V_k(\bfd)_\bfz$. Since $V_k(\bfd)=kx_0\oplus
\sum_{\bfz<\bfd}V_k(\bfd)_\bfz$ and $V_k'(\bfd)=kx_0\oplus
I^-x_0$, by \ref{the little weyl module of u'(g)}, we have
$kx_0\oplus \sum_{\bfz<\bfd}V_k(\bfd)_\bfz=kx_0\oplus I^-x_0$.
Hence dim\,$\sum_{\bfz<\bfd}V_k(\bfd)_\bfz=$dim\,$I^-x_0$, and the
result follows.
\end{proof}
\begin{Lem}\label{I^-x0 is stabilized undet the left action by elements EiN with N geq l}
Assume $\bfd\in\mbn^n$ with $d_i<l$ for all $i$. Let $x_0$ be the
highest weight vector of $V(\bfd)$. Then, $E_i^{(N)}I^-x_0\han I^-x_0$
whenever $N\geq l$.
\end{Lem}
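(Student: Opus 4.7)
My plan is to combine the weight-space description of $I^-x_0$ from Lemma~\ref{I^-x0} with the weight-shift statement of Lemma~\ref{the action of Ei and Fi on Vz}(1). Since $V_k(\bfd)=U_k^-(\frak g)x_0$, all weights of $V_k(\bfd)$ are $\leq\bfd$ in the partial order, so the goal is to show $E_i^{(N)}V_k(\bfd)_\bfz\han\sum_{\bfy<\bfd}V_k(\bfd)_\bfy$ for every $\bfz<\bfd$ and every $N\geq l$. Lemma~\ref{the action of Ei and Fi on Vz}(1) already gives $E_i^{(N)}V_k(\bfd)_\bfz\han V_k(\bfd)_{\bfz+N\al(i)}$, so the entire question reduces to the location of the target weight $\bfz+N\al(i)$.

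I distinguish three cases. If $\bfz+N\al(i)\not\leq\bfd$, the target weight space vanishes and there is nothing to do. If $\bfz+N\al(i)<\bfd$, then $V_k(\bfd)_{\bfz+N\al(i)}$ is one of the summands of $I^-x_0$ and we are done. The delicate case is $\bfz+N\al(i)=\bfd$, equivalently $\bfz=\bfd-N\al(i)$, because the image then sits in $kx_0$, which is \emph{not} inside $I^-x_0$. I would handle this by showing that the source $V_k(\bfd)_\bfz$ is already zero: linear independence of the columns $\al(1),\dots,\al(n)$ of the positive-definite Cartan matrix forces, via a PBW-basis inspection, every element of $U_k^-(\frak g)$ of weight $-N\al(i)$ to be a scalar multiple of $F_i^{(N)}$, so $V_k(\bfd)_\bfz=kF_i^{(N)}x_0$. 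By construction of $V(\bfd)$ we have $F_i^{d_i+1}x_0=0$, hence $F_i^{(d_i+1)}x_0=0$ in $V(\bfd)$ (since $[d_i+1]!\neq 0$ in $\mbq(\up)$), and a standard divided-power identity propagates this to $F_i^{(N)}x_0=0$ for every $N\geq d_i+1$. This relation descends to $V_\sZ(\bfd)\han V(\bfd)$ and then by base change to $V_k(\bfd)$; since $d_i<l\leq N$, we conclude $V_k(\bfd)_\bfz=0$, as required.

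The main obstacle is exactly this third case: without the hypothesis $N\geq l$ the operator $E_i^{(N)}$ could a priori send a nonzero element of $I^-x_0$ onto a nonzero multiple of $x_0\notin I^-x_0$, and the argument must exploit $d_i<l\leq N$ to rule this out by collapsing the source weight space to zero. The other two cases, and the underlying weight bookkeeping, are entirely routine.
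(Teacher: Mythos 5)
Your argument is correct, and its skeleton is the one the paper uses: reduce via Lemma \ref{I^-x0} to the weight spaces $V_k(\bfd)_\bfz$ with $\bfz<\bfd$, move them with Lemma \ref{the action of Ei and Fi on Vz}(1), and use linear independence of $\al(1),\dots,\al(n)$ to isolate $\bfz=\bfd-N\al(i)$ as the only dangerous configuration. The real divergence is how that configuration is disposed of. The paper argues by contradiction on a spanning monomial $Y=F_{j_1}^{M_1}\cdots F_{j_s}^{M_s}$ of $I^-$ (all exponents $<l$): if $E_i^{(N)}Yx_0\notin I^-x_0$, the weight count forces $Y=F_i^N$, which is already zero in $\tilde u_k^-(\frak g)$ because $F_i^l=0$ and $N\geq l$. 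You instead annihilate the entire source weight space: since $V_\sZ(\bfd)=U_\sZ^-(\frak g)x_0$ and the only PBW monomial of weight $-N\al(i)$ is $F_i^{(N)}$, you get $V_k(\bfd)_{\bfd-N\al(i)}=kF_i^{(N)}x_0=0$ from the Weyl-module relation $F_i^{(d_i+1)}x_0=0$ together with $N\geq l>d_i$. Your route buys a marginally stronger statement (the inclusion holds for every $N\geq d_i+1$, with no appeal to the nilpotency of $F_i$ in the infinitesimal quantum group), at the price of invoking the PBW description of $U_\sZ^-(\frak g)$ and of identifying the eigenvalue-defined space $V_k(\bfd)_\bfz$ with the reduction of the integral weight space of $V_\sZ(\bfd)$; the latter identification is exactly the one the paper itself uses implicitly (e.g.\ in \ref{I^-x0} and in deducing $N\al(i)=M_1\al(j_1)+\cdots+M_s\al(j_s)$), so you are at the same level of rigor. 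The paper's version, by contrast, only needs the elementary observation that a monomial in the $F_j$'s of weight $-N\al(i)$ must be a power of $F_i$, and never leaves $I^-x_0$.
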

\begin{proof}Let $Y=F_{j_1}^{M_1}F_{j_2}^{M_2}\cdots F_{j_s}^{M_s}$ where
$0<M_j<l$ for all $j$. If $E_i^{(N)}Yx_0\not\in I^-x_0$, then by
\ref{I^-x0} we have $E_i^{(N)}Yx_0\in V_k(\bfd)_\bfd$. By \ref{the
action of Ei and Fi on Vz}, we have
$N\al(i)=M_1\al(j_1)+\cdots+M_s\al(j_s)$. Since
$\al(1),\cdots,\al(n)$ are linearly independent, we have
$N=M_1+\cdots+M_s$ and $j_1=\cdots=j_s=i$. So $Y=F_i^N$. Since
$F_i^l=[l]_\ep!F_i^{(l)}=0$ and $N\geq l$, we have $Y=0$. This is
a contradiction.
\end{proof}

The following result is given in \cite[7.1(c)(d)]{L1} when $l'$ is odd.

\begin{Thm}\label{Lk=Lk'}
Assume that $\bfd=(d_1,\cdots,d_n)\in\mbn^n$ with
$0\leq d_i< l$ for all $i$. Then $L_k(\bfd)=L_k'(\bfd)$.
\end{Thm}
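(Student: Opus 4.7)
The plan is to identify the two maximal proper submodules $W_k(\bfd)$ and $W_k'(\bfd)$ by working entirely inside the common vector space $V_k(\bfd)=V_k'(\bfd)$, which is legitimate thanks to Theorem \ref{the little weyl module of u'(g)}. Since every $U_k(\frak g)$-submodule is automatically a $\tu_k(\frak g)$-submodule, the maximality of $W_k'(\bfd)$ as a proper $\tu_k(\frak g)$-submodule immediately yields the inclusion $W_k(\bfd)\subseteq W_k'(\bfd)$. The theorem will therefore follow as soon as the reverse inclusion is established, and for this it suffices to exhibit some proper $U_k(\frak g)$-submodule of $V_k(\bfd)$ containing $W_k'(\bfd)$.

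The natural candidate is $I^-x_0$. First, by Lemma \ref{I^-x0} this is exactly $\sum_{\bfz<\bfd}V_k(\bfd)_\bfz$, so it is a proper subspace (it misses $x_0$) and it is visibly stable under the toral subalgebra, including all $\bigl[{\sfK_i;0\atop t}\bigr]$ via Lemma \ref{Ki;c l-Ki;c' l}. Second, $I^-x_0$ is stable under $\tu_k(\frak g)$ by construction. Since $W_k'(\bfd)$ is a proper $\tu_k(\frak g)$-submodule, it cannot contain $x_0$, and the weight-space decomposition forces $W_k'(\bfd)\subseteq I^-x_0$.

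The step that carries the real content is showing that $I^-x_0$ is stable under the remaining generators of $U_k(\frak g)$, namely the divided powers $E_i^{(N)}$ and $F_i^{(N)}$ with $N\geq l$. For $F_i^{(N)}$ this is harmless: by Lemma \ref{the action of Ei and Fi on Vz} it sends $V_k(\bfd)_\bfz$ into $V_k(\bfd)_{\bfz-N\al(i)}$, and $\bfz<\bfd$ together with $N\al(i)\geq 0$ yields $\bfz-N\al(i)<\bfd$, so we stay inside $\sum_{\bfz<\bfd}V_k(\bfd)_\bfz=I^-x_0$. For $E_i^{(N)}$ with $N\geq l$ the required stability is precisely the content of Lemma \ref{I^-x0 is stabilized undet the left action by elements EiN with N geq l}. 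Combining these, $I^-x_0$ is a proper $U_k(\frak g)$-submodule.

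With $I^-x_0$ in hand, the argument closes quickly: the $U_k(\frak g)$-submodule of $V_k(\bfd)$ generated by $W_k'(\bfd)$ lies inside $I^-x_0$, hence is proper, hence is contained in the unique maximal proper $U_k(\frak g)$-submodule $W_k(\bfd)$. Therefore $W_k'(\bfd)\subseteq W_k(\bfd)$, giving the equality $W_k(\bfd)=W_k'(\bfd)$ and consequently $L_k(\bfd)=L_k'(\bfd)$. The main obstacle was really verifying that $I^-x_0$ absorbs the ``high'' divided powers $E_i^{(N)}$, $N\geq l$; once Lemma \ref{I^-x0 is stabilized undet the left action by elements EiN with N geq l} is available this becomes bookkeeping on weight spaces.
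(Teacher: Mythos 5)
Your reduction to proving $W_k(\bfd)=W_k'(\bfd)$ and your derivation of $W_k(\bfd)\han W_k'(\bfd)$ from the maximality of $W_k'(\bfd)$ agree with the paper. The gap is in the central step: the claim that $I^-x_0$ is ``stable under $\tu_k(\frak g)$ by construction'' is false, and with it the claim that $I^-x_0$ is a proper $U_k(\frak g)$-submodule. By construction $I^-x_0$ is stable under $\tu_k^-(\frak g)$ and under the torus, but not under the generators $E_i$: by (QG3) and $E_ix_0=0$ one has $E_iF_ix_0=\frac{\sfK_i-\sfK_i^{-1}}{\ep-\ep^{-1}}x_0=[d_i]_\ep\,x_0$, and $[d_i]_\ep\neq0$ whenever $0<d_i<l$, so $E_i$ sends $F_ix_0\in I^-x_0$ to a nonzero multiple of $x_0\notin I^-x_0$. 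Concretely, for $\frak g=\frak{sl}_2$ and $\bfd=(1)$ the Weyl module $V_k(\bfd)$ is two-dimensional and simple, yet $I^-x_0=kF_1x_0\neq0$; so $I^-x_0$ cannot be a submodule. Hence checking stability under the $F_i^{(N)}$ and, via Lemma \ref{I^-x0 is stabilized undet the left action by elements EiN with N geq l}, under the $E_i^{(N)}$ with $N\geq l$ does not yield a $U_k(\frak g)$-submodule, and your route to $W_k'(\bfd)\han W_k(\bfd)$ breaks down at exactly the point you flagged as carrying the real content.

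The paper avoids this by never asking $I^-x_0$ to be a submodule: it bounds the $U_k(\frak g)$-submodule $V$ generated by $W_k'(\bfd)$ instead. The crucial difference is that $W_k'(\bfd)$, unlike $I^-x_0$, \emph{is} stable under $\tu_k^+(\frak g)$, so the commutation formula \ref{the communicative formula in u'(g)}(2) allows one to absorb all ``small'' positive divided powers into $W_k'(\bfd)$ and write $U_k^+(\frak g)W_k'(\bfd)$ inside the span of $E_{i_1}^{(N_1)}\cdots E_{i_s}^{(N_s)}W_k'(\bfd)$ with every $N_i\geq l$; only then is Lemma \ref{I^-x0 is stabilized undet the left action by elements EiN with N geq l} invoked, together with $W_k'(\bfd)\han I^-x_0$, to get $U_k^+(\frak g)W_k'(\bfd)\han I^-x_0$. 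Finally $V=U_k^-(\frak g)U_k^+(\frak g)W_k'(\bfd)\han U_k^-(\frak g)I^-x_0=\sum_{\bfz<\bfd}V_k(\bfd)_\bfz$ by \ref{I^-x0} and \ref{the action of Ei and Fi on Vz}, whence $V\han W_k(\bfd)$ by maximality. If you replace your assertion that $I^-x_0$ is a $U_k(\frak g)$-submodule by this bound on $U_k(\frak g)W_k'(\bfd)$, the remainder of your argument is sound.
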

\begin{proof}By \ref{the little weyl module of u'(g)}, it is
enough to prove that $W_k(\bfd)=W_k'(\bfd)$. Also, by \ref{the little
weyl module of u'(g)}, the restriction of $W_k(\bfd)$ to $\tilde
u_k(\frak g)$ is a submodule of $V_k(\bfd)=V_k'(\bfd)$. Hence, by
the maximality of $W_k'(\bfd)$, we have $W_k(\bfd)\han
W_k'(\bfd)$. On the other hind, we consider the $U_k(\frak
g)$-submodule $V$ of $V_k(\bfd)$ generated by $W_k'(\bfd)$. We
shall prove that $V\han \sum_{\bfz<\bfd}V_k(\bfd)_\bfz$. Since
$W_k'(\bfd)$ is a $\tilde u_k(\frak g)$-module, by \ref{the
communicative formula in u'(g)}(2), we have
\begin{equation*}
\begin{split}
U_k^+(\frak g)W_k'(\bfd) &\han \text{span}\{E_{i_1}^{(N_1)} \cdots
E_{i_s}^{(N_s)}W_k'(\bfd)\mid s\geq 0,\ N_i\geq l\ for\ all\ i\}\\
&\han \{E_{i_1}^{(N_1)} \cdots E_{i_s}^{(N_s)}I^-x_0\mid s\geq 0,\
N_i\geq l\ for\ all\ i\}\\
&\han I^-x_0\qquad\qquad (\text{by \ref{I^-x0 is stabilized undet
the left action by elements EiN with N geq l}}).
\end{split}
\end{equation*}
Hence,  by \ref{I^-x0} and \ref{the action of Ei and Fi on Vz},
$V =U_k^-(\frak g)U_k^+(\frak g)W_k'(\bfd)\han U_k^-(\frak
g)I^-x_0=U_k^-(\frak
g)\sum_{\bfz<\bfd}V_k(\bfd)_\bfz=\sum_{\bfz<\bfd}V_k(\bfd)_\bfz$.
So by the maximality of $W_k(\bfd)$, we have $W_k'(\bfd)\han V\han
W_k(\bfd)$. The result follows.
\end{proof}
\begin{Rem}
Note that, if $l'$ is odd and $\bfd\in\mbn^n$ with $d_i<l$ for all $i$,
then $L_k'(\bfd)=L_k(\bfd)$ is also a $u_k(\frak g)$-module. So, by
\cite[6.6]{L2}, the $u_k(\frak g)$-module $L_k(\bfd)$
($\bfd\in\mbn^n,\ 0\leq d_i<l$ for all $i$) give all simple
$u_k(\frak g)$-modules.
\end{Rem}

\section{The infinitesimal and little  $q$-Schur algebras}
In this section, we shall recall the definitions of the infinitesimal
$q$-Schur algebra defined in \cite{Cox} and \cite{Cox00} and the little
$q$-Schur algebra defined in \cite{DFW,Fu07}.

For the moment, we assume that $R$ is a ring and $q^{\frac12}\in R.$

Following \cite{DD}, let $A_{q}(n)$
be the $R$-algebra generated by the $n^2$ indeterminates $c_{ij}$,
with $1\leq i,j\leq n$, subject to the relations
\begin{align*}
c_{ij}c_{il}&= c_{il}c_{ij}& &\text{for all }i,j,l,\\
c_{ij}c_{rs}&=q c_{rs}c_{ij}& &\text{for $i>r$ and
$j\leq s$},\\
c_{ij}c_{rs}&=(q-1)c_{rj}c_{is}+c_{rs}c_{ij}& &\text{for
$i>r$ and $j>s$}.
\end{align*}
The algebra $A_{q}(n)$ has a bialgebra structure such that
the coalgebra structure is given by
$$\Dt(c_{ij})=\sum_{t=1}^nc_{it}\ot c_{tj}\qquad\text{and}\qquad
\epsilon(c_{ij})=\dt_{ij}.$$
%Let $M=M_q(n)$ be the quantum monoid $M(n)$ associated to the bialgebra $A_q(n)$.
Let $A_q(n,r)$ denote the subspace of elements in $A_q(n)$ of
degree $r$. Then $A_q(n,r)$ are in fact subcoalgebras of $A_q(n)$
for all $r$, and hence, $U_R(n,r):=A_q(n,r)^*$ is an $R$-algebra, which is call a $q$-{\it Schur algebra}.

Let $\Xi(n)$ be the set of all $n\times n$ matrices over
$\mathbb N$.
%with all off diagonal entries in $\mathbb N$, and let
%$\Xi(n)=M_n(\mathbb N)$ be the subset of $\widetilde\Xi(n)$
%consisting of matrices with entries all in $\mathbb N$.
Let $\sigma:\Xi(n)\rightarrow\mathbb N$ be the map sending a matrix to
the sum of its entries. Then, for $r\in\mathbb N$, the inverse
image $\Xi(n,r):=\sigma^{-1}(r)$ is the set of $n\times n$
matrices in $\Xi(n)$ whose entries sum to $r$.

For $A\in\Xi(n)$, let
$$c^A=c_{1,1}^{a_{1,1}}c_{2,1}^{a_{2,1}}\cdots
c_{n,1}^{a_{n,1}}c_{1,2}^{a_{1,2}}c_{2,2}^{a_{2,2}}\cdots
c_{n,2}^{a_{n,2}}\cdots
c_{1,n}^{a_{1,n}}c_{2,n}^{a_{2,n}}\cdots
c_{n,n}^{a_{n,n}}\in A_q(n).
$$
Then by \cite{DD} (see also \cite{T1}), the set
%\begin{equation}\label{basis-Aq(n,r)}
$\{c^A\mid A\in\Xi(n,r)\}$
%\end{equation}
forms an $R$-basis for $A_q(n,r)$.
Putting ${\xi_A}:= (c^A)^*$, we obtain the dual basis $\{\xi_A\mid A\in\Xi(n,r)\}$ for the $q$-Schur algebra $U_R(n,r)$.

For $A\in\Xi(n,r)$,  let
$$[A]=q^{\frac{-d_A}2}\xi_A\quad\text{ where }\quad d_A=-\sum_{i<s,\,j>t}a_{i,j}a_{s,t}+\sum_{j>t}a_{i,j}a_{i,t}.$$ Then $\{[A]\}_{A\in\Xi(n,r)}$ forms also a basis for $U_R(n,r)$.

%By \cite[(5.7)]{DPW}, \cite[A.1]{Du92} and \cite[3.4]{Du96}, we have the following result.
%\begin{Prop}\label{iso-vi}
%There is an algebra isomorphism $\vi:(A_q(n,r))^*\ra U_k(n,r)$ defined by sending $\xi_A$ to $\vs^{d_A}[A]$ %for $A\in\Xi(n,r)$,\end{Prop}
%With the above result, we will identify $(A_q(n,r))^*$ with  $U_k(n,r)$. In particular, we will identify %$\xi_A$ with $\vs^{d_A}[A]$.
We now introduce the infinitesimal $q$-Schur algebras. Thus, we assume $R=k$ is a field of {\it characteristic $p>0$} and $q=\vs^2\in k$. Since $\ep$ is a primitive $l'$-th root of unity, $q$ is {\it always} a primitive $l$-th root of unity.

Consider the following ideals in $A_q(n)$
$$\aligned
I_h&=\langle c_{ij}^{lp^{h-1}},c_{ij}^{lp^{h-1}}-1\mid 1\leq i\neq j\leq n\rangle,\\
\tilde I_h&=\langle c_{ij}^{lp^{h-1}},c_{ii}^{l'p^{h-1}}-1\mid1\leq i\not=j\leq n\rangle,\text{ and }\\
J_h&=\langle c_{ij}^{lp^{h-1}}\mid1\leq i\neq j\leq n\rangle.\\
\endaligned$$
Clearly, $J_h\han \tilde I_h\han I_h$. Note that $J_h$ is a graded ideal and, if $l'$ is odd, then $l=l'$ and $I_h=\tilde I_h$.

\begin{Lem}\label{coideal}
The ideals $I_h$, $\tilde I_h$ and $J_h$ are all coideals of $A_q(n)$.
\end{Lem}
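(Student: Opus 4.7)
The plan is to reduce the verification to a single ``quantum Frobenius'' comultiplication identity, namely $\Delta(c_{ij}^N)=\sum_{t=1}^n c_{it}^N\otimes c_{tj}^N$ for $N=lp^{h-1}$. A preliminary observation: whenever $I$ is a two-sided ideal of $A_q(n)$, the set of $x\in A_q(n)$ with $\Delta(x)\in I\otimes A_q(n)+A_q(n)\otimes I$ and $\epsilon(x)=0$ is again a two-sided ideal, since $I\otimes A_q(n)+A_q(n)\otimes I$ is a two-sided ideal of $A_q(n)\otimes A_q(n)$ and both $\Delta$ and $\epsilon$ are algebra maps. Consequently, for each of $J_h$, $\tilde I_h$, $I_h$ it suffices to verify the coideal conditions on the listed generators.

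To prove the identity, set $y_t=c_{it}\otimes c_{tj}\in A_q(n)\otimes A_q(n)$, so that $\Delta(c_{ij})=y_1+\cdots+y_n$. The defining relations of $A_q(n)$ give $c_{it}c_{is}=c_{is}c_{it}$ (same row) and $c_{tj}c_{sj}=qc_{sj}c_{tj}$ for $t>s$ (same column), and hence $y_ty_s=qy_sy_t$ for $t>s$. This relation propagates: if $Y=y_1+\cdots+y_{n-1}$ then $y_nY=qYy_n$, so the $q$-binomial theorem for two $q$-commuting variables, together with the vanishing of $\binom{l}{k}_q$ for $0<k<l$ at the primitive $l$-th root $q$, yields by induction on $n$ that $(y_1+\cdots+y_n)^l=\sum_{t=1}^n y_t^l$. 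Since $q^l=1$, each $y_t^l$ is central in the subalgebra generated by the $y_s$'s, so the $y_t^l$ pairwise commute; the Freshman's dream in characteristic $p$ then gives $(\sum_t y_t^l)^{p^{h-1}}=\sum_t y_t^{lp^{h-1}}$. Combining these two steps establishes the displayed identity.

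The rest is bookkeeping on each generator. For $J_h$, in each summand $c_{it}^N\otimes c_{tj}^N$ of $\Delta(c_{ij}^N)$ (with $i\neq j$) either $t\neq i$ (so $c_{it}^N\in J_h$) or $t=i$ (so $c_{tj}^N=c_{ij}^N\in J_h$), while $\epsilon(c_{ij}^N)=0$. For the extra generator $c_{ii}^N-1$ of $I_h$, one has $\epsilon(c_{ii}^N-1)=0$ and $\Delta(c_{ii}^N-1)=\sum_{t\neq i}c_{it}^N\otimes c_{ti}^N+c_{ii}^N\otimes(c_{ii}^N-1)+(c_{ii}^N-1)\otimes 1$, which visibly lies in $I_h\otimes A_q(n)+A_q(n)\otimes I_h$. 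For $\tilde I_h$ the only new case is $l'$ even, where $M:=l'p^{h-1}=2N$; since $c_{it}^N\in\tilde I_h$ for $t\neq i$, reducing modulo $\tilde I_h\otimes A_q(n)+A_q(n)\otimes\tilde I_h$ gives $\Delta(c_{ii}^N)\equiv c_{ii}^N\otimes c_{ii}^N$, whence $\Delta(c_{ii}^M)=\Delta(c_{ii}^N)^2\equiv c_{ii}^M\otimes c_{ii}^M$, and the same rearrangement as for $I_h$ finishes the case.

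The main technical obstacle is the quantum Frobenius identity: once the $q$-commutation of the $y_t$ is in hand, what drives the proof is the combination of the vanishing of $q$-binomial coefficients at a primitive $l$-th root of unity with the characteristic-$p$ Freshman's dream applied to the (now commuting) $l$-th powers $y_t^l$. Everything else is routine and reduces to the observation that $I\otimes A_q(n)+A_q(n)\otimes I$ is a two-sided ideal of $A_q(n)\otimes A_q(n)$.
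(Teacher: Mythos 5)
Your proof is correct and follows essentially the same route as the paper: both rest on the Frobenius-type identity $\Delta(c_{ij}^{lp^{h-1}})=\sum_{t=1}^n c_{it}^{lp^{h-1}}\otimes c_{tj}^{lp^{h-1}}$ and then check the listed generators of $J_h$, $I_h$ and $\tilde I_h$ term by term, with the $l'$ even case of $\tilde I_h$ handled by squaring $\Delta(c_{ii}^{lp^{h-1}})$. The only real difference is that the paper simply cites this identity from Du--Parshall--Wang (3.4), whereas you give a correct self-contained proof of it (the $q$-commutation $y_ty_s=qy_sy_t$ of the elements $y_t=c_{it}\otimes c_{tj}$, vanishing of Gaussian binomials at the primitive $l$-th root $q$, and the characteristic-$p$ argument for the $p^{h-1}$-th power), and you also make explicit the reduction to generators and the counit condition, which the paper leaves implicit.
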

\begin{proof}
The assertion for $J_h$ and $I_h$ is well known, using \cite[(3.4)]{DPW}. More precisely, we have
\begin{equation*}
\aligned
\Dt(c_{i,j}^{lp^{h-1}})&=\sum_{1\leq k\leq n}c_{i,k}^{lp^{h-1}}\ot c_{k,j}^{lp^{h-1}}\in J_h\ot A_q(n)+A_q(n)\ot J_h,\quad (i\neq j)\\
\Dt(c_{i,i}^{lp^{h-1}}-1)&=\sum_{k\not=i}c_{i,k}^{lp^{h-1}}
\ot c_{k,i}^{lp^{h-1}}+(c_{i,i}^{lp^{h-1}}-1)\ot c_{i,i}
^{lp^{h-1}}+1\ot(c_{i,i}^{lp^{h-1}}-1)\\
&\in I_h\ot A_q(n)+A_q(n)\ot I_h.
\endaligned
\end{equation*}
%Thus $J_h$ and $I_h$ are all coideals of $A_q(n)$.
If $l'$ is odd, then $\tilde I_h=I_h$ is the coideal of $A_q(n)$.
Now we assume $l'$ is even. Then
\begin{equation*}
\begin{split}
\Dt(c_{i,i}^{l'p^{h-1}}-1)&=(\Dt(c_{i,i}^{lp^{h-1}}))^2-1\ot 1\\
&=\bigg(\sum_{1\leq k\leq n}c_{i,k}^{lp^{h-1}}
\ot c_{k,i}^{lp^{h-1}}\bigg)^2-1\ot 1\\
&=\sum_{j\not=k}c_{i,j}^{lp^{h-1}}c_{i,k}^{lp^{h-1}}\ot
c_{j,i}^{lp^{h-1}} c_{k,i}^{lp^{h-1}}+\sum_{1\leq k\leq n}c_{i,k}^{l'p^{h-1}}\ot c_{k,i}^{l'p^{h-1}}-1\ot 1\\
&=\sum_{j\not=k}c_{i,j}^{lp^{h-1}}c_{i,k}^{lp^{h-1}}\ot
c_{j,i}^{lp^{h-1}} c_{k,i}^{lp^{h-1}}+
\sum_{k\not=i}c_{i,k}^{l'p^{h-1}}\ot c_{k,i}^{l'p^{h-1}}\\
&\qquad+(c_{i,i}^{l'p^{h-1}}-1)\ot
c_{i,i}^{l'p^{h-1}}+1\ot(c_{i,i}^{l'p^{h-1}}-1)\\
&\in \tilde I_h\ot A_q(n)+A_q(n)\ot \tilde I_h.
\end{split}
\end{equation*}
Thus, $ \tilde I_h $ is a coideal of $A_q(n)$.
\end{proof}
Now, by the above lemma,
$A_q(n)/J_h$, $A_q(n)/I_h$ and $A_q(n)/ \tilde I_h $ are all
bialgebras, and $A_q(n)/J_h$ is graded.
% and give rise to three quantum monoids which we denote
%by $M_hD$, $M_h$, $\widetilde M_h$ respectively. It should be
%noted that $M_h$ is isomorphic to $G_h$.
Let $A_q(n,r)_h$ be the
subspace of $A_q(n)/J_h$ consisting of the homogeneous polynomials
of degree $r$ in the $c_{ij}$. Since $A_q(n,r)_h$ is a finite
dimensional subcoalgebra of $A_q(n)/J_h$, its dual
$$s_k(n,r)_h=A_q(n,r)_h^*$$ is a finite dimensional algebra, which
is called an {\it infinitesimal $q$-Schur algebra} in \cite{Cox} and
\cite{Cox00} (cf. \cite{DNP1}). There are two canonical maps
\begin{equation}\label{tpi}
\pi:A_q(n)/J_h\twoheadrightarrow A_q(n)/I_h\text{ and
}\tilde\pi:A_q(n)/J_h\twoheadrightarrow A_q(n)/ \tilde I_h .
\end{equation}
Since $\pi(A_q(n,r)_h)$
and $\tilde\pi(A_q(n,r)_h)$ are all coalgebras, we may
define the algebras
$$u_k(n,r)_h=(\pi(A_q(n,r)_h))^*,\quad \tu_k(n,r)_h=(\tilde\pi(A_q(n,r)_h))^*.$$
 By definition, we see easily that
\begin{equation}\label{sandwich}
u_k(n,r)_h\han\tu_k(n,r)_h\han s_k(n,r)_h.
\end{equation} In the case of
$l'$ being an odd number, we have $u_k(n,r)_h=\tu_k(n,r)_h$. In general, we will use these inclusions together with results on simple  modules of $u_k(n,r)_1$ and $s_k(n,r)_1$, which is stated in the next theorem, to determine all simple  $\tu_k(n,r)_1$-modules in \S5.

\begin{Rem} When $l'$ is even, the coideal $\tilde I_h$ was not introduced in the literature, say, \cite{DD} or \cite{Cox, Cox00}. The definitions of $u_k(n,r)_h$ and $s_k(n,r)_h$ are independent of $l'$, while that of $\tu_k(n,r)$ depends on $l'$.
We will establish below in \ref{the relation between the little q-Schur algebra with the
infinitesimal q-Schur algebra} a connection between  $\tu_k(n,r)_1$ and the little $q$-Schur algebra $\tu_k(n,r)$.
\end{Rem}

Let
$D_q=\sum_{\pi\in\frak
S_n}(-1)^{\ell(\pi)}c_{1,1\pi}c_{2,2\pi}\cdots c_{n,n\pi}\in A_q(n)$ be the quantum determinant, where $\frak S_n$ is the symmetric
group and $\ell(\pi)$ is the length of $\pi$. Then
the localization $A_q(n)_{D_q}$ is a Hopf
algebra. Let $G=G_q(n)$ be the quantum linear group whose
coordinate algebra is $k[G]:=A_q(n)_{D_q}$.
%Let $T=T_q(n)$ be the torus of $G$. In fact, $T$ is just the ordinary
%(classical) $n$-dimensional torus. We denote the character group
%of $T$ by $X(T)$, which is isomorphic to $\mbz^n$. Similarly, we
%can define the submonoid $D=D_q(n)$ of $M$ corresponding to $T$.
Following \cite[\S 3.1,\S3.2]{Don} (see also \cite[1.3]{Cox} and \cite{Cox00}), let $G_h$ be the
$h$-th Frobenius kernel and $G_hT$, where $T=T_q(n)$ be the torus of $G$, be the corresponding ``Jantzen
subgroups''. Then
\begin{equation}\label{Gh=Mh}
k[G_h]:=A_q(n)_{D_q}/\langle I_h\rangle\cong A_q(n)/I_h\quad\text{ and }\quad
k[G_hT]:=A_q(n)_{D_q}/\langle J_h\rangle,
\end{equation}
and $A_q(n)/J_h$ is the polynomial part of $k[G_hT]$.\footnote{If one introduces the quantum matrix semigroup $M$, its `torus' $D$ and the $h$th Frobenoius kernel $M_h$, then $A_q(n)$, $A_q(n)/I_h$, $A_q(n)/J_h$ are respectively the coordinate algebras of $M$, $M_h$ and $M_hD$.}

Denote the character group of $T$ by $$\sfX:=\mathbb Z^n\cong X(T).$$
For each $\la\in \sfX$, by
\cite[3.1(13)(i)]{Don} (see also \cite[1.7]{Cox} and
\cite{Cox00}),  there is a
simple object $L_h(\la)$ in the category Mod($G_h$) of $G_h$-modules and a simple object $\h
L_h(\la)$ in the category  Mod$(G_hT)$ of $G_hT$-modules. Let
$$\sfX_h:=X_h(T)=\{\la\in \sfX(=\mathbb Z^n)\mid0\leq \la_i-\la_{i+1}\leq lp^{h-1}-1,\ 1\leq i\leq n \},$$
where we set $\la_{n+1}=0$. In particular, $\sfX_1=\{\la\in \mathbb Z^n\mid0\leq\la_i-\la_{i+1}<l, 1\leq i\leq n \},$
\begin{Thm}$($\cite[3.1(13),(18)]{Don}$)$
\label{the relation between various irreducible module} The set
$\{L_h(\la)\mid\la\in \sfX_h\}$ is a full set of nonisomorphic
simple   $G_h$-modules, and $\{\h L_h(\la)\mid\la\in \sfX\}$ is a
full set of nonsiomorphic simple  $G_hT$-modules. Moreover, for
all $\la\in \sfX$, we have $\h L_h(\la)|_{G_h}\cong L_h(\la)$.
\end{Thm}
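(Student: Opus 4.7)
The plan is to adapt Donkin's approach for classical Frobenius kernels to the present quantum setting, where by \eqref{Gh=Mh} the coordinate algebras $k[G_h]$ and $k[G_hT]$ have been identified as concrete quotients of $A_q(n)$. The first step is to introduce analogues of Borel subgroup schemes $B_h^\pm\subseteq G_h$ and $B_h^\pm T\subseteq G_hT$, cut out by the off-diagonal generators $c_{ij}$ with $i>j$ (resp.\ $i<j$); a verification parallel to \ref{coideal} shows that their coordinate rings are finite-dimensional Frobenius Hopf algebras, so one has a well-behaved induction functor $\mathrm{ind}_{B_h^+T}^{G_hT}(-)$ with Frobenius reciprocity and a tensor identity.

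For the $G_hT$-classification: given a simple $G_hT$-module $M$, a standard socle argument (any non-zero map from $M$ into an induced module from a character of $T$ contracts to a non-zero $B_h^+T$-fixed vector) forces $M^{B_h^+}\neq 0$; since $T$ acts semisimply on this $B_h^+$-fixed subspace, $M$ contains a one-dimensional $B_h^+T$-stable line of some weight $\la\in\sfX$, giving a well-defined map $M\mapsto\la$. Conversely, $\mathrm{ind}_{B_h^+T}^{G_hT}(k_\la)$ is non-zero with simple socle $\h L_h(\la)$, and these socles are pairwise non-isomorphic since their top $T$-weights differ. This establishes the bijection $\sfX\leftrightarrow\{\h L_h(\la)\}$.

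For the $G_h$-classification: the same argument, now using the finite torus $T_h=T\cap G_h$ whose character group is $\sfX/lp^{h-1}\sfX$, yields a bijection between simple $G_h$-modules and $\sfX/lp^{h-1}\sfX$; the set $\sfX_h$ visibly provides a transversal (of the right cardinality $(lp^{h-1})^n$), supplying the labelling $\{L_h(\la)\mid\la\in\sfX_h\}$. For the restriction statement, I would exploit that $G_h$ is normal in $G_hT$ with quotient the torus $T/T_h$: either by a Clifford-theoretic argument, or concretely by checking that a highest weight generator of $\h L_h(\la)$ already generates a simple $G_h$-submodule with the correct $T_h$-weight, one concludes that $\h L_h(\la)|_{G_h}$ stays simple and must be isomorphic to $L_h(\la)$.

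The main obstacle is the preliminary step: verifying that the quantum Borel subgroup schemes $B_h^\pm T$ are well-defined via the coideal structure exhibited in \ref{coideal}, and that the resulting induction functor has the usual exactness and reciprocity properties. This is essentially the content of \cite{Cox,Cox00} in the quantum setting, and it replaces the classical use of the distribution algebra $\mathrm{Dist}(G_h)$; once it is in place, the highest-weight machinery proceeds exactly as in the algebraic group case treated in \cite{Don}.
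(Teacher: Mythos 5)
There is nothing in the paper to compare your argument with: Theorem \ref{the relation between various irreducible module} is not proved in the paper at all, but imported as a black box from Donkin \cite[3.1(13),(18)]{Don} (see also \cite{Cox,Cox00}), and the paper only uses its statement later. What you have written is therefore a reconstruction of the proof in the cited sources, and in outline it is the correct and standard one: quantum Borels $B_h^\pm T\subseteq G_hT$ obtained by killing the $c_{ij}$ with $i>j$ (a bialgebra coideal, exactly as in \ref{coideal}), exact induction with Frobenius reciprocity and the tensor identity, highest-weight classification of simple $G_hT$-modules by $\sfX$ and of simple $G_h$-modules by $X(T_h)\cong\sfX/lp^{h-1}\sfX$ with $\sfX_h$ a transversal (your cardinality count is right, since the $n$ differences $\la_i-\la_{i+1}$, with $\la_{n+1}=0$, determine $\la$), and the restriction statement from the normality of $G_h$ in $G_hT$. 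The genuinely quantum input --- that these Borel subgroup schemes and the induction machinery behave as in the classical case, including the mixed quantum/classical Frobenius kernels for $h\geq 2$ --- is exactly what \cite{Cox,Cox00} and \cite[\S\S3.1--3.2]{Don} supply, and you say so; modulo that citation the sketch is sound.

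Two points should be tightened if you wrote this out. First, a simple $G_hT$-module $M$ need not satisfy $M^{B_h^+}\neq 0$: what you get from the unipotent part is $M^{U_h^+}\neq 0$ (the augmentation ideal of the dual of $k[U_h^+]$ is nilpotent, so the trivial module is its only simple), and then decomposing $M^{U_h^+}$ under $T$ gives a one-dimensional $B_h^+T$-\emph{stable} (not fixed) line of weight $\la\in\sfX$, which is what your argument actually uses; also $k[B_h^+T]$ is not finite dimensional, only $k[B_h^\pm]$ is. Second, Clifford theory by itself only shows that $\h L_h(\la)|_{G_h}$ is semisimple and isotypic, say $L_h(\mu)^{\oplus m}$, because the quotient $G_hT/G_h$ is diagonalizable; to get $m=1$ and $\mu$ correct you still need the highest-weight input, e.g.\ write $\la=\la^0+lp^{h-1}\la^1$ with $\la^0\in\sfX_h$, use $\h L_h(\la)\cong \h L_h(\la^0)\otimes lp^{h-1}\la^1$ with the twist trivial on $G_h$, and compare the one-dimensional highest weight lines --- this is how \cite{Don}, and classically \cite[II.9]{Jan87}, complete the argument. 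With these repairs your proposal is a faithful account of the proof the paper chose to cite rather than reproduce.
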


%The objects in $\Mod_{A_q(n,r)_h}(G_hT)$ are called homogeneous of degree $r$.
By \cite{Cox},\cite{Cox00} (cf. \cite{DNP1}),
every polynomial $G_hT$-module (equivalently, $A_q(n)/J_h$-comodule) $V$ has a direct sum decomposition
$V=\oplus_{r\geq 0}V_r$, where $V_r$ is the $r$th homogeneous component (i.e., is an $s_k(n,r)_h$-module).
In particular,  if $|\la|=r$, then $\h L_h(\la)$ is an $s_k(n,r)_h$-module.
Note that a $G_h$-module  does not have such a direct sum
decomposition, since the decomposition $A_q(n)/I_h=\sum_{r\geq
0}\pi(A_q(n,r)_h)$ is not direct sum. However, if $|\la|=r$, then $L_h(\la)$ is a $u_k(n,r)_h$-module.

We now relate the $q$-Schur algebras to quantum enveloping algebra of $\mathfrak{gl}_n$ as given in \cite{BLM}, and define little $q$-Schur algebras.
%Let $U_\sZ(n,r)$ be the algebra over $\sZ$ introduced in
%\cite[1.2]{BLM}. It has a normalized $\sZ$-basis
%$\{[A]\}_{A\in\Xi(n,r)}$ defined in \cite{BLM}. We put
%$\bfU(n,r)=U_\sZ(n,r)\otimes_\sZ\mathbb Q(\upsilon).$ In
%\cite[1.4]{Du}, the algebra $U_\sZ(n,r)$ is shown to be naturally
%$isomorphic to the $q$-Schur algebra. In the sequel, we shall call
%$$U_\sZ(n,r)$ and $\bfU(n,r)$ $q$-Schur algebras.

Let $\Xi^{\pm}(n)$ be the set of all $A\in\Xi(n)$ whose diagonal
entries are zero. Given $r>0$, $A\in\Xi^\pm(n)$ and
${\bf j}=(j_1,j_2,\cdots,j_n)\in \mbz^n$, we define
\begin{equation*}
A({\bf j},r)=\sum_{D\in\Xi^0(n) \atop
\sigma(A+D)=r}\upsilon^{\sum_id_ij_i}[A+D]\in\bfU(n,r):=U_{\mathbb Q(\up)}(n,r).
\end{equation*}
where $\Xi^0(n)$ denotes the subset of diagonal matrices in
$\Xi(n)$ and $D=\text{diag}(d_1,\cdots,d_n)$.

The following result follows from \cite[5.5,5.7]{BLM} (see also \cite[(5.7)]{DPW}, \cite[A.1]{Du92} and \cite[3.4]{Du96}).
For $1\leq i,j\leq
n$, let $E_{i,j}\in\Xi(n)$ be the matrix unit $(a_{k,l})$ with
$a_{k,l}=\delta_{i,k}\delta_{j,l}$.

\begin{Thm} There is an algebra epimorphism $\zeta_r:\bfU(n)\twoheadrightarrow\bfU(n,r)$ satisfying
$$E_h\map E_{h,h+1}(\mathbf 0,r),\ K_1^{j_1}K_2^{j_2}\cdots
K_n^{j_n}\mapsto 0(\mathbf j,r),\ F_h\map E_{h+1,h}(\mathbf
0,r).$$
Moreover, $\zeta_r(U_\sZ(n))=U_\sZ(n,r)$ (\cite{Du}).
\end{Thm}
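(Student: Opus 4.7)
The plan is to define $\zeta_r$ first on the generators $E_h$, $F_h$, $K_i^{\pm 1}$ of $\bfU(n)$ by the prescribed formulas and then verify that it respects the defining relations (QG1)--(QG6), so that it extends to a well-defined algebra homomorphism $\bfU(n)\to\bfU(n,r)$. Surjectivity will follow by showing that each basis element $[A]$ of $\bfU(n,r)$ lies in the image. The integral statement then comes from checking that the divided powers $E_h^{(N)}$, $F_h^{(N)}$ and the Kostant-type elements $\big[{K_i;0\atop t}\big]$ all land in $U_\sZ(n,r)$ and that their ordered products already span it.

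The core technical tool, established in \cite[\S5]{BLM}, is an explicit multiplication formula computing $E_{h,h+1}(\mathbf 0,r)\cdot [A]$ and $E_{h+1,h}(\mathbf 0,r)\cdot [A]$ as $\sZ$-linear combinations of basis elements $[A']$, where $A'$ is obtained from $A$ by transferring a unit from column $h+1$ to column $h$ (respectively the reverse). The operators $0(\mathbf j,r)$ act diagonally on $\{[A]\}$. With these formulas in hand, the relations (QG1), (QG2), and the $a_{ij}=0$ cases of (QG4) reduce to straightforward combinatorial identities on matrices, while (QG3) collapses via a telescoping of sums over the intermediate matrices sharing a row-sum pattern, producing exactly the right-hand side $\frac{\sfK_h-\sfK_h^{-1}}{\up-\up^{-1}}$ under $\zeta_r$.

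Surjectivity is then established by showing that iteration of the multiplication formula expresses every element $A(\mathbf j,r)$ with $A\in\Xi^\pm(n)$ and $\mathbf j\in\mathbb Z^n$ as a $\sZ$-linear combination of ordered products of $\zeta_r(E_h)$, $\zeta_r(F_h)$, and $\zeta_r(K_i^{\pm 1})$; since the family $\{A(\mathbf j,r)\}$ spans $\bfU(n,r)$ by \cite{BLM}, this yields the epimorphism. The integral form $\zeta_r(U_\sZ(n))=U_\sZ(n,r)$ then follows because divided powers of $E_{h,h+1}(\mathbf 0,r)$ and $E_{h+1,h}(\mathbf 0,r)$ together with the diagonal Kostant-type elements already generate a $\sZ$-spanning set of the integral $q$-Schur algebra (cf.~\cite{Du}, \cite[A.1]{Du92}).

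The main obstacle is the verification of the quantum Serre relations (QG5) and (QG6). Two successive applications of the multiplication formula produce double sums of $[A']$'s whose coefficients are products of two Gauss binomials, and only after a nontrivial $q$-binomial identity (summed over an intermediate parameter) do the contributions cancel to zero. A conceptually cleaner route, implicit in \cite{Du96}, is to restrict attention to the subalgebras generated by successive pairs $\{E_h,E_{h+1}\}$ and $\{F_h,F_{h+1}\}$, reducing the check to the rank-two case where the computation is classical; the integral divided-power version (QG5), (QG6) for $F_i^{(N)}$ is then obtained from the identities \eqref{the communicative formula Fi in Uupsilon(g)} and \eqref{the communicative formula Fi1 in Uupsilon(g)} already recorded in the excerpt.
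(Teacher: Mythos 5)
The paper offers no proof of this theorem at all: it is quoted from \cite[5.5, 5.7]{BLM} (see also \cite[(5.7)]{DPW}, \cite[A.1]{Du92}) for the epimorphism and from \cite{Du} for the integral statement, and your outline --- checking the defining relations through the BLM multiplication formulas, obtaining surjectivity from the triangularity argument that expresses each basis element $[A]$ (equivalently each $A(\mathbf j,r)$) via monomials in the images of the generators, and deducing $\zeta_r(U_\sZ(n))=U_\sZ(n,r)$ from the $\sZ$-integrality of those formulas together with the divided-power monomial spanning argument of \cite{Du}, \cite[A.1]{Du92} --- is precisely a reconstruction of those cited arguments. So your proposal is correct and follows essentially the same route as the sources the paper relies on; the only mild caveat is that your ``rank-two reduction'' for the Serre relations localizes, rather than eliminates, the double-sum $q$-binomial cancellation, which is also how the verification is carried out in the literature.
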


 For $A\in\Xi(n)$ and for
$\bft=(t_1,\cdots,t_n)\in\mbn^n$, let $\left[{\ttk_i;c \atop
t_i}\right]=\zr\left(\left[{K_i;c \atop t_i}\right]\right)$,
$\ttk_\bft=\prod\limits_{i=1}^n\left[{\ttk_i;0 \atop t_i}\right]$.
 Let
$$\tte_i=\zr(E_i),\quad
\ttf_i=\zr(F_i),\quad \ttk_j=\zr(K_j)\quad\text{ for }1\leq i\leq n-1, 1\leq
j\leq n.$$
 Let $U_\sZ^+(n,r)$ (resp., $U_\sZ^-(n,r),$ $U_\sZ^0(n,r)$)
be the $\sZ$-subalgebras of $U_\sZ(n,r)$ generated by the
$\tte_i^{(m)}$ (resp., $\ttf_i^{(m)}$, $\ttk_\la$), where $1\leq i\leq n-1$ and
$\la\in\La(n,r)=\{\la\in\mbn^n\mid \sg(\la)=r\}$. Here, $\sg(\la)=\la_1+\cdots+\la_n$.

\begin{Lem}$($\cite{DG2,DP}$)$\label{the property of Ur}
$(1)$ The set $\{\ttk_\la\ \mid \ \la\in\La(n,r)\}$ is a complete set
of primitive orthogonal idempotents $($hence a basis$)$ for
$U_\sZ^0(n,r)$. In particular, $1=\sum_{\la\in\La(n,r)}\ttk_\la.$

$(2)$ Let $\la\in\La(n,r)$, then
$\ttk_i\ttk_\la=\up^{\la_i}\ttk_\la$ for $1\leq i\leq n$.
\end{Lem}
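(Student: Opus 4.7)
My plan is to identify $\ttk_\la$ explicitly with the diagonal basis element $[\diag(\la)]$ of $U_\sZ(n,r)$, since the fact that $\{[\diag(\la)]:\la\in\Lambda(n,r)\}$ is a complete set of primitive orthogonal idempotents of $U_\sZ(n,r)$ is already part of the BLM setup.

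First, I would apply the epimorphism $\zr$ to the generator $K_i$. By the definition given in the theorem preceding the lemma,
\[
\ttk_i=\zr(K_i)=0(\be_i,r)=\sum_{D\in\Xi^0(n),\,\sigma(D)=r}\up^{d_i}[D]=\sum_{\la\in\La(n,r)}\up^{\la_i}[\diag(\la)],
\]
where $\be_i$ is the $i$th standard basis vector in $\mbz^n$. Since $\{[\diag(\la)]\}_{\la\in\La(n,r)}$ consists of orthogonal idempotents summing to $1$ (a standard consequence of \cite{BLM}), we also obtain $\ttk_i^{-1}=\sum_\la\up^{-\la_i}[\diag(\la)]$, and therefore part (2) falls out immediately: $\ttk_i\ttk_\la=\up^{\la_i}\ttk_\la$ once we have established $\ttk_\la=[\diag(\la)]$.

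Next I would unwind the quantum binomial factors. Applying $\zr$ termwise to $\bigl[{K_i;0\atop t}\bigr]$ and using orthogonality of the $[\diag(\la)]$, a direct calculation gives
\[
\bigg[{\ttk_i;0\atop t}\bigg]=\prod_{s=1}^{t}\frac{\ttk_i\up^{-s+1}-\ttk_i^{-1}\up^{s-1}}{\up^s-\up^{-s}}=\sum_{\la\in\La(n,r)}\bigg[{\la_i\atop t}\bigg][\diag(\la)].
\]
Taking the product over $i=1,\dots,n$ and again using orthogonality,
\[
\ttk_\la=\prod_{i=1}^{n}\bigg[{\ttk_i;0\atop \la_i}\bigg]=\sum_{\mu\in\La(n,r)}\bigg(\prod_{i=1}^{n}\bigg[{\mu_i\atop \la_i}\bigg]\bigg)[\diag(\mu)].
\]
The inner product vanishes whenever some $\la_i>\mu_i$, while for $\la,\mu\in\La(n,r)$ with $\la_i\leq\mu_i$ for all $i$ the equality $\sigma(\la)=r=\sigma(\mu)$ forces $\la=\mu$; in that case each factor equals $1$. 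Hence $\ttk_\la=[\diag(\la)]$, which is the heart of the argument.

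With this identification, (1) is immediate: the $[\diag(\la)]$ are primitive orthogonal idempotents in $U_\sZ(n,r)$ with $\sum_\la[\diag(\la)]=1$ by the BLM basis theorem, and since they lie in $\zr(U_\sZ^0(n))=U_\sZ^0(n,r)$ and their $\sZ$-span is already closed under multiplication, they form a $\sZ$-basis of $U_\sZ^0(n,r)$. The step I expect to be the main obstacle, and the only place genuine computation is needed, is the combinatorial expansion showing $\ttk_\la=[\diag(\la)]$; once that is in hand, both conclusions follow formally from the known structure of the BLM basis and the formula for $\zr$ on the $K_i$.
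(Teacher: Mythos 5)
Your proof is correct: the identification $\ttk_\la=[\diag(\la)]$ via $\zr(K_i)=\sum_{\mu\in\La(n,r)}\up^{\mu_i}[\diag(\mu)]$, the diagonal evaluation of the quantum binomials on the orthogonal idempotents, and the vanishing argument forcing $\mu=\la$ are all sound, and parts (1) and (2) do follow formally from this together with the BLM facts you quote. The paper itself gives no proof (the lemma is cited from \cite{DG2,DP}), and your argument is essentially the standard one underlying those references, so there is nothing to add.
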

Since $U_k(n,r)\cong U_\sZ(n,r)\ot_\sZ k$, $\zr$ naturally induces a
surjective homomorphism $\zr\ot 1:U_k(n)\twoheadrightarrow
U_k(n,r).$ For convenience, we shall denote $\zr\ot 1$ by $\zr$.
Similarly, we denote $\tte_i\ot 1$, $\ttf_i\ot 1$, $\ttk_j\ot 1$
by $\tte_i$, $\ttf_i$, $\ttk_j$.

The algebra $\tu_k(n,r):=\zr(\tu_k(n))$ is called a {\it little $q$-Schur algebra} in
\cite{DFW,Fu07} and is generated by $\tte_i$, $\ttf_i$, $\ttk_j$. Putting $\tu_k^+(n,r)=\zr(\tu_k^+(n))$,
$\tu_k^-(n,r)=\zr(\tu_k^-(n))$ and $\tu_k^0(n,r)=\zr(\tu_k^0(n))$, we have
$\tu_k(n,r)=\tu_k^-(n,r)\tu_k^0(n,r)\tu_k^+(n,r).$
Let
$$s_k(n,r)=\tu_k^-(n,r)U_k^0(n,r)\tu_k^+(n,r).$$ This is the subalgebra of $U_k(n,r)$ generated by the
elements $\tte_i,\ \ttf_i,\ \ttk_j,\ \left[{\ttk_j;0 \atop
t}\right]$ $(1\leq i\leq n-1,\ 1\leq j\leq n,\ t\in\mbn)$. We shall see below that $s_k(n,r)$ is isomorphic to the infinitesimal $q$-Schur algebra $s_k(n,r)_1$.

\begin{Rem}\label{unr} When $l'=l$ is odd, the restriction $\zeta_r:\tu_k(n)\to\tu_k(n,r)$  factors through the
quotient algebra $u_k(n)$, the infinitesimal quantum $\mathfrak{gl}_n$, defined at the end of \S2. Thus,
in this case, $\tu_k(n,r)$ is the same algebra as $u_k(n,r)$ considered in \cite{DFW}.
\end{Rem}

For a positive integer $m$, let
$\mbz_{m}=\mbz/m\mbz$.
Let
$$\ba{(\,\,)}_m :\mbz^n\ra(\mbz_{m})^n$$ be
the map defined by
$\ol{(j_1,j_2,\cdots,j_n)}=(\ol{j_1},\ol{j_2},\cdots,\ol{j_n}).$
For a subset $Y$ of $\mbz^n$, we shall denote $\ol{Y}_{m}=\{\bar
y\in(\mbz_{m})^n\mid y\in Y\}$.

For $\ol\la\in(\mbz_{l'})^n$, define
\begin{equation*}\ttp_{\lb}=
\begin{cases}
\sum\limits_{\mu\in\La(n,r),\mb=\lb}\ttk_\mu &
 \text{if}\ \lb\in\lrb\\
 0 & \text{otherwise}.
\end{cases}
\end{equation*}
\begin{Lem}\label{the property of ur}\cite{DFW,Fu07}
The set $\{\ttp_{\lb}\mid\lb\in\lrb\}$ forms a $k$-basis of
$\tu_k^0(n,r)$.
\end{Lem}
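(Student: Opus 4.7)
The plan is to work inside the ambient torus $U_k^0(n,r)$, whose basis $\{\ttk_\la\mid\la\in\La(n,r)\}$ is supplied by the previous lemma on the property of $U_r$, and to exploit the diagonal action $\ttk_j\ttk_\la=\ep^{\la_j}\ttk_\la$. A first observation is that $\ep^{l'}=1$ together with the idempotent decomposition $1=\sum_{\la\in\La(n,r)}\ttk_\la$ gives $\ttk_j^{l'}=\sum_{\la}\ep^{l'\la_j}\ttk_\la=\sum_\la\ttk_\la=1$. Consequently $\tu_k^0(n,r)$, being generated by the $\ttk_j^{\pm 1}$, is spanned over $k$ by the monomials $\ttk^{\bfa}:=\ttk_1^{a_1}\cdots\ttk_n^{a_n}$ with $\bfa=(a_1,\ldots,a_n)\in(\mbz_{l'})^n$.

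Next I would expand each such monomial in the basis $\{\ttk_\la\}$ and group the terms by the residue class $\lb=\bar\la$ modulo $l'$, obtaining $\ttk^{\bfa}=\sum_{\la\in\La(n,r)}\ep^{\bfa\cdot\la}\ttk_\la=\sum_{\lb\in\lrb}\ep^{\bfa\cdot\lb}\ttp_{\lb}$, where the pairings $\bfa\cdot\la$ and $\bfa\cdot\lb$ are unambiguous in $k$ because $\ep^{l'}=1$. This immediately gives one inclusion $\tu_k^0(n,r)\han\sum_{\lb\in\lrb}k\ttp_{\lb}$, and also displays the change-of-basis matrix relating the monomials $\ttk^{\bfa}$ to the elements $\ttp_{\lb}$.

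To invert this change of basis, I would apply discrete Fourier inversion on the finite abelian group $(\mbz_{l'})^n$. Since $\ep$ is a primitive $l'$-th root of unity in $k$, the integer $l'$ is invertible in $k$ and the orthogonality relation $\sum_{\bfa\in(\mbz_{l'})^n}\ep^{\bfa\cdot(\nb-\lb)}=(l')^n\dt_{\nb,\lb}$ holds in $k$. This yields the inversion formula $\ttp_{\lb}=\frac{1}{(l')^n}\sum_{\bfa\in(\mbz_{l'})^n}\ep^{-\bfa\cdot\lb}\ttk^{\bfa}$, so every $\ttp_{\lb}$ with $\lb\in\lrb$ lies in $\tu_k^0(n,r)$. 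Linear independence of $\{\ttp_{\lb}\mid\lb\in\lrb\}$ is then immediate: for distinct $\lb,\nb\in\lrb$, the supports of $\ttp_{\lb}$ and $\ttp_{\nb}$ in the basis $\{\ttk_\mu\mid\mu\in\La(n,r)\}$ of $U_k^0(n,r)$ are disjoint nonempty subsets of $\La(n,r)$.

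The only step that is not purely formal is the invertibility of $l'$ in $k$ and the attendant orthogonality relation, and this is the main potential obstacle. However, it follows from the standing hypothesis that $\ep$ is a primitive $l'$-th root of unity, which forces $\text{char}(k)\nmid l'$. Everything else reduces to a character-theoretic computation inside the commutative algebra $U_k^0(n,r)$.
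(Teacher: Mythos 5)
Your proof is correct: the paper itself states this lemma without proof, citing \cite{DFW,Fu07}, and your argument (expand monomials $\ttk_1^{a_1}\cdots\ttk_n^{a_n}$ in the idempotent basis $\{\ttk_\la\}$, note $\ttk_j^{l'}=1$, and invert by orthogonality of characters of $(\mbz_{l'})^n$, with linear independence from the disjoint supports of the $\ttp_{\lb}$) is exactly the standard root-of-unity/Vandermonde-type argument used in those references. Your handling of the one delicate point is also right: the existence of a primitive $l'$-th root of unity in $k$ forces $\mathrm{char}(k)\nmid l'$, so $(l')^n$ is invertible and the Fourier inversion is legitimate.
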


Let $\Xi(n)_h$ be the set of all $A=(a_{ij})\in\Xi(n)$ such that
$a_{ij}<lp^{h-1}$ for all $i\neq j$. Let $\Xi(n)_h^\pm=\{A\in\Xi(n)_h\mid a_{i,i}=0,\,\forall i\}$. Let $\Xi'(n)_h$ be the set
of all $n\times n$ matrices $A=(a_{ij})$ with $a_{ij}\in\mbn$,
$a_{ij}<lp^{h-1}$ for all $i\neq j$ and $a_{ii}\in\mbz_{l'p^{h-1}}$
for all $i$. We have an obvious map $pr:\Xi(n)_h\ra\Xi'(n)_h$
defined by reducing the diagonal entries modulo $l'p^{h-1}$. We
denote
$\Xi(n,r)_h:=\big\{A\in\Xi(n)_h\mid\sg(A)=r\big\}$ and $\Xi(n,r)_h^\pm=\Xi(n,r)_h\cap \Xi(n)_h^\pm$.

 Clearly, by regarding $s_k(n,r)_h$ as a subalgebra of the $q$-Schur algebra $U_k(n,r)$, the set
$$\{[A]\mid A\in\Xi(n,r)_h\}$$
forms a $k$-basis for $s_k(n,r)_h$.

Assume $A\in\Xi(n)_h^\pm$ with $\sg(A)\leq r$. Given $\lb\in\ol{\La(n,r-\sg(A))}_{l'p^{h-1}}$, let
\begin{equation}\label{[[A,r]]}
[\![A+\diag(\lb),r]\!]_h=\sum_{\mu\in\La(n,r-\s(A)) \atop
\mb=\lb}[A+\diag(\mu)].\end{equation}

\begin{Lem}\label{basis-tuk(n,r)}
The set $$\{[\![A+\diag(\lb),r]\!]_h\mid A\in\Xi(n,r)_h^\pm,\,\lb\in\ol{\La(n,r-\sg(A))}_{l'p^{h-1}}\}$$
forms a $k$-basis for $\tu_k(n,r)_h$. Thus
$\dim_k\tu_k(n,r)_h=\#pr(\Xi(n,r)_h)$. Similarly, the
set
$$\bigg\{\sum_{\mu\in\La(n,r-\s(A)) \atop
\mb=\lb}\xi_{A+\diag(\mu)}\mid
A\in\Xi(n,r)_h^\pm,\,\lb\in\ol{\La(n,r-\sg(A))}_{lp^{h-1}}\bigg\}$$
forms a $k$-basis for $u_k(n,r)_h$.
\end{Lem}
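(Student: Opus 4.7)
The plan is to exploit the definition $\tu_k(n,r)_h=(\tilde\pi(A_q(n,r)_h))^*$ and the dual inclusion $\tu_k(n,r)_h\hookrightarrow s_k(n,r)_h=A_q(n,r)_h^*$: a functional $\sum_{A\in\Xi(n,r)_h}\phi_A\xi_A\in s_k(n,r)_h$ lies in $\tu_k(n,r)_h$ precisely when it annihilates $\ker(\tilde\pi|_{A_q(n,r)_h})$, so to produce a basis it suffices first to identify a basis of $\tilde\pi(A_q(n,r)_h)$ and then dualize. The same strategy will yield the $u_k$ statement, with $\pi$ and $I_h$ in place of $\tilde\pi$ and $\tilde I_h$.

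The first step is to show that $A_q(n)/\tilde I_h$ admits the PBW-type basis $\{c^B\mid B\in\Xi'(n)_h\}$. This starts from the known basis $\{c^A\mid A\in\Xi(n)_h\}$ of $A_q(n)/J_h$ (cited in the paper from \cite{DD,T1}) and uses the further relation $c_{ii}^{l'p^{h-1}}=1$ to reduce each diagonal exponent modulo $l'p^{h-1}$. The reduction is legitimate because $c_{ii}^{l'p^{h-1}}$ is central in $A_q(n)/J_h$ (a standard Frobenius-type fact, since $q^{l'p^{h-1}}=1$) and the central element equals $1$ in the quotient. Consequently $\tilde\pi(c^A)=c^{pr(A)}$ for $A\in\Xi(n,r)_h$, and
\[
\tilde\pi(A_q(n,r)_h)=\mathrm{span}\{c^B\mid B\in pr(\Xi(n,r)_h)\},
\]
with the listed $c^B$ linearly independent as distinct elements of the basis of $A_q(n)/\tilde I_h$. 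Dualizing gives the basis
\[
\phi_B:=\sum_{A\in\Xi(n,r)_h,\,pr(A)=B}\xi_A\qquad (B\in pr(\Xi(n,r)_h))
\]
of $\tu_k(n,r)_h$, which already proves the dimension formula. Writing $B=A+\diag(\lb)$ with off-diagonal part $A\in\Xi(n)_h^\pm$ (satisfying $\sg(A)\leq r$) and diagonal class $\lb\in\overline{\La(n,r-\sg(A))}_{l'p^{h-1}}$ recovers the stated indexing. The $u_k(n,r)_h$ case is identical, with diagonals reduced modulo $lp^{h-1}$; since that basis is already expressed in the $\xi$-form, nothing more is required.

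The one remaining point for the $\tu_k$ case is to convert the $\xi$-basis into the $[\cdot]$-basis via $\xi_A=q^{d_A/2}[A]$, and this is the main technical step. It reduces to the congruence $d_A\equiv d_{A'}\pmod{l'}$ whenever $A,A'\in\Xi(n,r)_h$ differ only on the diagonal by multiples of $l'p^{h-1}$. Since $d$ is quadratic in the matrix entries, writing $A'=A+N$ with $N=l'p^{h-1}M$ ($M$ diagonal, $\sg(M)=0$) gives $d_{A'}-d_A=d_N+C(A,N)$ for a bilinear cross-term $C$ that is a multiple of $l'p^{h-1}$; a direct inspection of both sums defining $d_A$ shows $d_N=0$ for any diagonal $N$ (in each summand, forcing all indices onto the diagonal contradicts the strict inequalities $i<s,\,j>t$). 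Hence $d_{A'}-d_A$ is a multiple of $l'$ and $\ep^{d_A}=\ep^{d_{A'}}$, so $\phi_B=\ep^{d_{A_0}}[\![B,r]\!]_h$ for any representative $A_0$ of the fiber, completing the proof.
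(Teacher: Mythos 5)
Your argument is correct, and its skeleton is the same as the paper's: identify a monomial basis of $\tilde\pi(A_q(n,r)_h)$ and dualize, so that the fiber sums $\sum_{pr(A)=B}\xi_A$ form the dual basis of $\tu_k(n,r)_h=(\tilde\pi(A_q(n,r)_h))^*$, which one then rescales into the $[\![\,\cdot\,,r]\!]_h$ form. Where you differ is in self-containedness: the paper's proof simply quotes \cite[4.2.4]{Fu05} for the basis $\{c^{A+\diag(\lb)}+\tilde I_h\}$ of $\tilde\pi(A_q(n,r)_h)$ and (an analogue of) \cite[5.5.3]{Fu05} for the identity $(c^{A+\diag(\lb)}+\tilde I_h)^*=\vs^{d_{A+\diag(\la)}}[\![A+\diag(\lb),r]\!]_h$, while you supply direct proofs of both ingredients, and both hold up. For the first, since $q=\ep^2$ has order $l$ and $l$ divides $N:=l'p^{h-1}$, a short induction with the defining relations of $A_q(n)$ shows that the commutator of $c_{i,i}^N$ with any generator is a multiple of $q^{\pm N}-1=0$, so $c_{i,i}^N$ is indeed central, $c^Ac_{i,i}^N=c^{A+NE_{i,i}}$, and $\tilde I_h/J_h$ is spanned by the differences $c^{A+NE_{i,i}}-c^A$, which is exactly what gives the linear independence you need; I would write out this commutator computation and the final spanning step rather than appeal to a ``standard Frobenius-type fact''. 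For the second, your computation that $d$ vanishes on diagonal matrices and that the cross term is divisible by $l'p^{h-1}$ gives the constancy of $\vs^{d_{A+\diag(\mu)}}$ on each fiber $\mb=\lb$, which is precisely the content the paper imports from \cite{Fu05} (and reuses again in \ref{the map rhol}). One reading point you resolved correctly: the index set $\Xi(n,r)_h^\pm$ in the statement must be interpreted as $A\in\Xi(n)_h^\pm$ with $\sg(A)\leq r$ (as in \eqref{dim}), which is what makes your bijection between $pr(\Xi(n,r)_h)$ and the pairs $(A,\lb)$ work.
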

\begin{proof}
By \cite[4.2.4]{Fu05}, the set
\begin{equation}\label{cA}
\{c^{A+\diag(\lb)}+ \tilde I_h \mid
A\in\Xi(n,r)_h^\pm,\lb\in\ol{\La(n,r-\sg(A))}_{l'p^{h-1}}\}
\end{equation}
forms a $k$-basis for $\tilde\pi(A_q(n,r)_h)$. Similar to \cite[5.5.3]{Fu05},\footnote{The argument given in \cite{Fu05} is for the quantum coordinate algebra $A_{q,1}(n)$, while $A_q(n)$ considered here is $A_{1,q}(n)$. Here $A_{\alpha,\beta}(n)$ is the two parameter version defined in \cite{T1}.} we have
\begin{equation}\label{cA^*}
(c^{A+\diag(\lb)}+ \tilde I_h )^*=\sum_{\mu\in\La(n,r-\s(A)) \atop
\mb=\lb}\xi_{A+\diag(\mu)}=\vs^{d_{A+\diag(\la)}}[\![A+\diag(\lb),r]\!]_h.
\end{equation}
Here baring on $\mu$ is relative to $l'p^{h-1}$.
The first assertion follows. Replacing $l'p^{h-1}$ by $lp^{h-1}$ in the bar map, the first quality of \eqref{cA^*} gives the second assertion.
\end{proof}

The proof above gives immediately
the following result. This result, in terms of two parameter quantum linear groups, is the $(1,q)$-version of \cite[5.5]{Fu05} which is the $(q,1)$-version, see the footnote above.
%One can easily use the proof there together with , \cite[8.2(2)]{DFW},  \cite[3.11]{Fu05} and %\cite[6.8(2)]{Fu07}, we get the following corollary, which is given in  (cf. \cite[6.10]{Fu07}).
\begin{Coro}
\label{the relation between the little q-Schur algebra with the
infinitesimal q-Schur algebra}We have algebra isomorphisms $\tu_k(n,r)_1\cong \tu_k(n,r)$ and  $s_k(n,r)_1\cong s_k(n,r)$.
\end{Coro}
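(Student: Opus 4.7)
The plan is to identify $\tu_k(n,r)_1$ with $\tu_k(n,r)$, and $s_k(n,r)_1$ with $s_k(n,r)$, as one and the same subspace of the $q$-Schur algebra $U_k(n,r)$, by matching their explicit $k$-bases. Lemma \ref{basis-tuk(n,r)} (applied with $h=1$) already exhibits $\tu_k(n,r)_1$ inside $U_k(n,r)$ with basis $\{[\![A+\diag(\lb),r]\!]_1\mid A\in\Xi(n,r)_1^\pm,\,\lb\in\ol{\La(n,r-\sg(A))}_{l'}\}$ via the dual pairing \eqref{cA^*}, and likewise $s_k(n,r)_1$ acquires the basis $\{[A]\mid A\in\Xi(n,r)_1\}$. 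So the task reduces to verifying that these same sets are $k$-bases of $\tu_k(n,r)=\zr(\tu_k(n))$ and of $s_k(n,r)=\tu_k^-(n,r)U_k^0(n,r)\tu_k^+(n,r)$ respectively.

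For the first isomorphism, I would invoke the triangular decomposition $\tu_k(n,r)=\tu_k^-(n,r)\tu_k^0(n,r)\tu_k^+(n,r)$ recorded just above the statement. By Lemma \ref{the property of ur}, $\tu_k^0(n,r)$ has basis $\{\ttp_\lb\}_{\lb\in\lrb}$, which matches exactly the $A=0$ portion of the Lemma \ref{basis-tuk(n,r)} basis via the definition \eqref{[[A,r]]}. For a nonzero $A\in\Xi(n,r)_1^\pm$, split $A=A^-+A^+$ into strictly lower and strictly upper triangular parts (whose entries are automatically $<l$). The BLM multiplication formulas from \cite{BLM} then express each product $\faf\,\ttp_\lb\,\eap$ triangularly in the $[\![\cdot,r]\!]_1$-basis with a unit leading coefficient at $[\![A+\diag(\lb),r]\!]_1$. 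Since $\tu_k^\pm(n,r)$ is generated by the divided powers $\tte_i^{(m)}$ and $\ttf_i^{(m)}$ with $0\leq m<l$, this triangularity plus a dimension count forces $\tu_k(n,r)$ to have precisely the Lemma \ref{basis-tuk(n,r)} set as its $k$-basis, yielding the asserted isomorphism.

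The second isomorphism follows by the same argument with $\tu_k^0(n,r)$ replaced by $U_k^0(n,r)$: by Lemma \ref{the property of Ur}, the latter has the full basis $\{\ttk_\mu\mid\mu\in\La(n,r)\}$, which corresponds to dropping the mod-$l'$ bar in \eqref{[[A,r]]} and therefore to using the unbarred basis $\{[A]\mid A\in\Xi(n,r)_1\}$ of $s_k(n,r)_1$. The one technical point is the triangularity of the BLM-type multiplication in the $[\![\cdot,r]\!]_1$-basis; this is established in \cite[5.5]{Fu05} for the $(q,1)$-parameter convention, and, as indicated by the footnote in the proof of Lemma \ref{basis-tuk(n,r)}, exactly the same argument transfers to the $(1,q)$-convention in force here, which is precisely why the proof of Lemma \ref{basis-tuk(n,r)} already delivers the corollary.
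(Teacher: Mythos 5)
Your proposal is correct in outline, and its opening and closing steps are exactly the paper's (very terse) argument: identify $\tu_k(n,r)_1$ and $s_k(n,r)_1$ inside $U_k(n,r)$ via the dual-basis computation \eqref{cA^*} in Lemma \ref{basis-tuk(n,r)}, then observe that the resulting spanning sets coincide with bases of $\tu_k(n,r)$ and $s_k(n,r)$. Where you diverge is in how that coincidence is justified. The paper simply quotes the previously established basis/dimension theorems: $\{[\![A+\diag(\lb),r]\!]_1\}$ is a basis of $\tu_k(n,r)=\zr(\tu_k(n))$ by \cite[8.2]{DFW} and \cite[6.8]{Fu07} (this is exactly how \eqref{dim} and the proof of \ref{symmetric} later use it), and the statement for $s_k(n,r)$ is the $(1,q)$-transfer of \cite[5.5]{Fu05}, as the footnote indicates. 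You instead propose to re-derive these facts on the spot by a triangularity argument for the products $\faf\,\ttp_\lb\,\eap$. That route can be made to work---it is essentially how the cited theorems are proved---but as written it glosses over the genuine technical core: one must show that the lower-order terms of $\faf\,\ttp_\lb\,\eap$ again lie in the span of the $[\![B+\diag(\mb),r]\!]_1$ with $B\in\Xi(n,r)_1^\pm$, i.e.\ that unitriangularity survives the grouping of diagonals modulo $l'$; this is precisely the content of \cite[\S 8]{DFW} and \cite{Fu07} (and \cite[5.5]{Fu05} is the isomorphism statement, not the triangularity itself). Also, the ``dimension count'' is cleaner if organized as: (i) $\tu_k(n,r)\subseteq\tu_k(n,r)_1$, since $\tu_k(n,r)_1$ is a subalgebra of $U_k(n,r)$ (the annihilator of a coideal) containing $\tte_i,\ttf_i,\ttk_j$; and (ii) the products $\faf\,\ttp_\lb\,\eap\in\tu_k(n,r)$, indexed by the same pairs $(A,\lb)$ as the basis of Lemma \ref{basis-tuk(n,r)}, are linearly independent by triangularity, forcing $\tu_k(n,r)=\tu_k(n,r)_1$; this avoids any appearance of circularity, since the dimension of $\tu_k(n,r)$ is not assumed known. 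In short, your approach buys a more self-contained proof at the cost of reproducing the technical substance of \cite{DFW,Fu07,Fu05}, while the paper buys brevity by citation; either is fine provided the triangularity step is fully carried out or explicitly attributed rather than merely asserted.
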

Note that $u_k(n,r)$ (see \ref{unr}) is not defined when $l'$ is even. However, $u_k(n,r)_1$ is always defined, regardless $l'$ is odd or even.

%With above result, we will identify $\tu_k(n,r)$ (respectively,
%$s_k(n,r)$) and $\tu_k(n,r)_1$ (respectively, $s_k(n,r)_1$).

\section{The classification of simple  modules of little  $q$-Schur algebras}

In this section, we shall  give the classification of simple  modules  for the little $q$-Schur algebra $\tu_k(n,r)$.

%The first assertion follows from \cite[5.11]{L2} (see also \cite[7.1]{L1}).

%\begin{Thm}
%The set
%$\{L_k(\bfd)\mid \bfd=(\mbz_{l'})^{n-1}\}$ forms a
%complete set of non-isomorphic simple  $'\tu_k(n)$-modules. If $l'$ is odd, then the set
%it forms a
%complete set of non-isomorphic simple $'u_k(n)$-modules.
%\end{Thm}

For $\lb\in(\mbz_{l'})^n$, let
$\fM_k(\lb)=
\tu_k(n)/\fI_k(\lb)$
where
$$
\fI_k(\lb)=
\sum_{1\leq i\leq n-1}
\tu_k(n)E_i+\sum_{1\leq i\leq n}\tu_k(n)(K_i-\vs^{\la_i}).$$
Then $\fM_k(\lb)$ has a unique irreducible quotient, which will be denoted by $\fL_k(\lb)$(see the proof of \cite[5.11]{L2}).
If $l'$ is odd, then $K_i^{l'}$ is central in $\tu_k(n)$ and hence $\fL_k(\lb)$ can be regarded as a $u_k(n)$-module.

%\begin{equation*}
%\fM_k(\lb)=
%\begin{cases}
%u_k(n)/\fI(\lb) &\quad\text{if $l'$ is odd,}\\
%\tu_k(n)/\fI(\lb)&\quad\text{if $l'$ is even,}
%\end{cases}
%\end{equation*}
%where
%\begin{equation*}
%\fI_k(\lb)=
%\begin{cases}
%\sum_{1\leq i\leq n-1}
%u_k(n)E_i+\sum_{1\leq i\leq n}u_k(n)(K_i-\vs^{\la_i}) &\quad\text{if $l'$ is odd,}\\
%\sum_{1\leq i\leq n-1}
%\tu_k(n)E_i+\sum_{1\leq i\leq n}\tu_k(n)(K_i-\vs^{\la_i}) &\quad\text{if $l'$ is even.}
%\end{cases}
%\end{equation*}

Let
$\La^+(n,r)=\{\la\in\La(n,r)\mid \la_1\geq\la_2\geq\cdots\geq\la_n\}$ and let
$\La^+(n)=\cup_{r\geq 0}\La^+(n,r)$. For
$\la\in\La^+(n,r)$ let $V(\la)$ be the simple
$\bfU(n,r)$-module with highest weight $\la$. Let $x_\la$ be the highest weight vector of
$V(\la)$. Let $V_\sZ(\la)=U_\sZ(n,r)x_\la$. Since $U_\sZ(n,r)$ is a homomorphic image $U_\sZ(\frak{sl_n})$ by \ref{the image of UA(sln)}, we have $V_\sZ(\la)=U_\sZ(\frak{sl_n})x_\la$.
We denote $V_k(\la)=V_\sZ(\la)\ot_\sZ k$ and
let $L_k(\la)$ be the unique irreducible quotient of $V_k(\la)$.
For convenience, we shall denote the image of $x_\la$ in
$V_k(\la)$ and $L_k(\la)$ by the same letter. Let
$V_k'(\la)=\tilde u_k(n)x_\la$. We call $V_k'(\la)$ the
baby Weyl module of $\tilde u_k(n)$. Then $\fL_k(\lb)$ is the unique
irreducible quotient of $V_k'(\la)$.

If $\la\in \sfX_1$, then, by \ref{the little weyl
module of u'(g)}, and \ref{Lk=Lk'},
$\tu_k(\frak{sl}_n)x_\la=V_k(\la)$ and $L_k(\la)|_{\tu_k(\frak{sl}_n)}$ is irreducible. This together with
$\tu_k(\frak{sl}_n)\han \tu_k(\frak{gl}_n)\han U_k(n)$ implies the following.

  \begin{Lem}\label{res Lk(la)} For any $\la\in \sfX_1$, we have $V_k'(\la)=V_k(\la)$ and restriction gives $\tu_k(n)$-module isomorphisms
$\ L_k(\la)|_{\tu_k(n)}\cong\fL_k(\lb)$.
\end{Lem}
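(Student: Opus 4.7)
The plan is to reduce the lemma to the $\mathfrak{sl}_n$-versions already proved, namely \ref{the little weyl module of u'(g)} and \ref{Lk=Lk'}, by exploiting the Appendix identification $V_\sZ(\la)=U_\sZ(\mathfrak{sl}_n)x_\la$ (cf.\ \ref{the image of UA(sln)}), which realizes $V_k(\la)$ as a cyclic $U_k(\mathfrak{sl}_n)$-module generated by the highest weight vector $x_\la$.

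For the first assertion, I attach to $\la\in\sfX_1$ the $\mathfrak{sl}_n$-weight $\bfd=(\la_1-\la_2,\ldots,\la_{n-1}-\la_n)\in\mbn^{n-1}$. The defining inequalities $0\leq\la_i-\la_{i+1}<l$ of $\sfX_1$ ensure $0\leq d_i<l$, so \ref{the little weyl module of u'(g)}, applied with $\frak g=\mathfrak{sl}_n$, yields $\tu_k(\mathfrak{sl}_n)x_\la=V_k(\la)$. The inclusion $\tu_k(\mathfrak{sl}_n)\han\tu_k(n)$ then forces
\[
V_k(\la)=\tu_k(\mathfrak{sl}_n)x_\la\han\tu_k(n)x_\la=V_k'(\la)\han V_k(\la),
\]
so all inclusions are equalities.

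For the isomorphism $L_k(\la)|_{\tu_k(n)}\cong\fL_k(\lb)$, Theorem \ref{Lk=Lk'} (again with $\frak g=\mathfrak{sl}_n$) says that $L_k(\la)|_{\tu_k(\mathfrak{sl}_n)}$ is already simple, and the intermediate inclusion $\tu_k(\mathfrak{sl}_n)\han\tu_k(n)\han U_k(n)$ immediately upgrades this to simplicity as a $\tu_k(n)$-module. Since $E_ix_\la=0$ and $K_ix_\la=\vs^{\la_i}x_\la$ in $V_k(\la)$, and since $\vs$ has order $l'$ so that $\vs^{\la_i}$ depends only on $\bar\la_i\in\mbz_{l'}$, the canonical surjection $\tu_k(n)\twoheadrightarrow\tu_k(n)x_\la=V_k'(\la)=V_k(\la)$ descends through $\fM_k(\lb)$. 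Composing with $V_k(\la)\twoheadrightarrow L_k(\la)$ exhibits $L_k(\la)|_{\tu_k(n)}$ as a simple quotient of $\fM_k(\lb)$, which must therefore coincide with the unique simple quotient $\fL_k(\lb)$.

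The only real subtlety is the weight-matching $\la\leftrightarrow\lb$, and this is handled automatically by $\vs$ being a primitive $l'$-th root of unity. Once the Appendix surjectivity $U_\sZ(\mathfrak{sl}_n)\twoheadrightarrow U_\sZ(n,r)$ is invoked to reduce to the rank-$(n-1)$ setting of \S3, no further work is needed.
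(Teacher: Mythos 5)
Your proposal is correct and follows essentially the same route as the paper, which (in the text immediately preceding the lemma) invokes \ref{the little weyl module of u'(g)} and \ref{Lk=Lk'} for $\mathfrak{sl}_n$ via the Appendix identification $V_\sZ(\la)=U_\sZ(\mathfrak{sl}_n)x_\la$ and the inclusions $\tu_k(\mathfrak{sl}_n)\han\tu_k(n)\han U_k(n)$, exactly as you do. Your factoring of $\tu_k(n)\twoheadrightarrow V_k'(\la)$ through $\fM_k(\lb)$ merely spells out the paper's earlier remark that $\fL_k(\lb)$ is the unique irreducible quotient of $V_k'(\la)$.
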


Note further that
$\ol{(\sfX_1)}_{l'}\han\ol{\La^+(n)}_{l'}=(\mbz_{l'})^n$ and
\begin{equation}\label{odd case}
\ol{(\sfX_1)}_{l'}=\ol{\La^+(n)}_{l'}=(\mbz_{l'})^n,\,\,\text{ if $l'$ is odd.}
\end{equation}

\begin{Thm}[{\cite[5.11,6.6]{L2}}]\label{the irreducible module of u(n)}
(1) If $l'$ is odd, then $l'=l$ and the set
$\{\fL_k(\nu)\mid \nu\in(\mbz_{l})^n\}$ forms a
complete set of non-isomorphic simple  $u_k(n)$-modules.

(2) If
$l'$ is even, then $l'=2l$ and the set
$\{\fL_k(\nu)\mid \nu\in(\mbz_{2l})^n\}$ forms a
complete set of non-isomorphic simple  $\tu_k(n)$-modules.
\end{Thm}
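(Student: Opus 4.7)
The plan is to apply standard highest weight theory to the finite-dimensional algebra $\tu_k(n)$, exploiting its triangular decomposition $\tu_k(n)=\tu_k^-(n)\tu_k^0(n)\tu_k^+(n)$ and the nilpotence relations $E_i^l=F_i^l=0$. The crucial preliminary is to establish that on any finite-dimensional $\tu_k(n)$-module $V$, each $K_i$ acts with eigenvalues in the group of $l'$-th roots of unity, yielding a weight decomposition $V=\bigoplus_{\lb\in(\mbz_{l'})^n}V_\lb$ with $V_\lb=\{v\in V\mid K_iv=\ep^{\la_i}v,\ 1\leq i\leq n\}$. The torsion of the $K_i$-ratios $\ti K_i=K_iK_{i+1}^{-1}$ follows immediately from the identification of the subalgebra they generate with $\tu_k(\mathfrak{sl}_n)$, where the relation $\ti K_i^{2l}=1$ holds. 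The remaining one-parameter central direction is controlled on each simple module by Schur's lemma applied to suitable central elements of $\tu_k(n)$.

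Given the weight decomposition, the augmentation ideal of $\tu_k^+(n)$ acts nilpotently on $V$ since it is generated by the nilpotent $E_i$'s. Therefore $V^+:=\{v\in V\mid E_iv=0,\ \forall i\}$ is nonzero and, being stable under $\tu_k^0(n)$, contains a nonzero weight vector $v$ of some weight $\lb\in(\mbz_{l'})^n$. When $V$ is simple, the relations defining $\fI_k(\lb)$ all annihilate $v$, producing a surjection $\fM_k(\lb)\twoheadrightarrow V$, whence $V\cong\fL_k(\lb)$. To see non-isomorphism for distinct $\lb$, I would use the PBW-type description $\fM_k(\lb)=\tu_k^-(n)\,(1+\fI_k(\lb))$ together with the triangular decomposition to deduce that $\fL_k(\lb)^+$ is one-dimensional and concentrated in the character $\lb$; thus this character is an isomorphism invariant of $\fL_k(\lb)$, so different $\lb\in(\mbz_{l'})^n$ give non-isomorphic simples.

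Part (1) then reduces to part (2): in the odd case $l'=l$, the element $K_i^l-1$ is central in $\tu_k(n)$ and kills the highest weight vector of $\fL_k(\lb)$ since $\ep^{l\la_i}=\ep^{l'\la_i}=1$; by Schur's lemma it annihilates all of $\fL_k(\lb)$, so each $\fL_k(\lb)$ descends to a simple $u_k(n)$-module. Conversely, every simple $u_k(n)$-module inflates to a simple $\tu_k(n)$-module, which must be one of the $\fL_k(\lb)$ by the classification in (2), establishing the bijection $\{\fL_k(\nu)\mid\nu\in(\mbz_l)^n\}$ with the simple $u_k(n)$-modules.

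The main obstacle I anticipate is the preliminary torsion claim for the $K_i$. The identification $\tu_k(\mathfrak{sl}_n)\subseteq\tu_k(n)$ together with $\ti K_i^{2l}=1$ handles the $n-1$ coroot directions cleanly, but controlling the extra central parameter present in $\mathfrak{gl}_n$ but absent in $\mathfrak{sl}_n$ requires a separate argument, to be carried out by exploiting the defining presentation of $\tu_k(n)$ inside $U_k(n)$ together with the finite-dimensionality of the module to force this extra central character to be an $l'$-th root of unity via Schur's lemma. Once this torsion is in place, the remaining weight-space arguments and the identification of simple quotients of $\fM_k(\lb)$ are routine.
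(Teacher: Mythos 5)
The paper offers no proof of this theorem: it is quoted directly from \cite[5.11,6.6]{L2}, so your attempt can only be measured against Lusztig's argument, whose general shape — triangular decomposition, nilpotency of the positive part, highest weight vector, surjection $\fM_k(\lb)\twoheadrightarrow V$, one-dimensionality of the space of $E$-invariants — you reproduce correctly. The genuine problem sits exactly at what you call the ``crucial preliminary'', and it is not merely unproved but false as stated when $l'$ is odd. In $\tu_k(n)$ the torsion relation satisfied by the $K_i$ is $K_i^{2l}=1$, not $K_i^{l'}=1$: specializing the $U_\sZ(n)$-identity $\prod_{s=1}^{l}(\up^{s}-\up^{-s})\,K_i^{l}\lk{K_i;0\atop l}\rk=\prod_{s=1}^{l}\bigl(K_i^{2}\up^{-s+1}-\up^{s-1}\bigr)$ at $\up=\ep$ kills the left-hand side and yields $K_i^{2l}=1$; this is the $\frak{gl}_n$-analogue of the relation $\sfK_i^{2l}=1$ in the presentation recalled in \S2, cf.\ \cite[Th.~2.5]{DFW}. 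For $l'$ even this gives $K_i^{l'}=1$ and your weight decomposition; for $l'$ odd it gives only $K_i^{2l'}=1$, and eigenvalues outside $\langle\ep\rangle$ genuinely occur: the one-dimensional module on which $E_i,F_i$ act as $0$ and each $K_i$ as $-1$ satisfies the defining relations of $\tu_k(n)$ (note $\ti K_i\mapsto 1$ and $(-1)^{2l}=1$), is simple, and is isomorphic to no $\fL_k(\lb)$ with $\lb\in(\mbz_{l'})^n$, since $-1$ is not a power of $\ep$ for $l'$ odd. Thus for odd $l'$ the $\fL_k(\lb)$ do \emph{not} exhaust the simple $\tu_k(n)$-modules — this is exactly why part (1) is a statement about $u_k(n)$ rather than $\tu_k(n)$ — and your deduction of (1) from ``the classification in (2)'', which tacitly extends (2) verbatim to odd $l'$, is invalid. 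The repair is straightforward but must be made: for (1) argue directly in $u_k(n)$, where the imposed relation $K_i^{l}=1$ forces all $K_i$-eigenvalues to be powers of $\ep$ (as $\ep$ has order $l=l'$), and run the same highest-weight argument there.

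The second, related gap is that even in the even case you never actually obtain the relation you need. From $\tu_k(\frak{sl}_n)$ you only get $\ti K_i^{2l}=1$, and you propose to control ``the remaining one-parameter central direction'' by Schur's lemma applied to central elements such as $K_1\cdots K_n$. This cannot work: Schur's lemma (which, over the paper's ground field, also needs either algebraic closure or an argument that the relevant eigenvalues lie in $k$) tells you that a central element acts by \emph{some} scalar on a simple module, but gives no control over which scalar, hence none over the eigenvalues of the individual $K_i$; indeed, were no order relation on the $K_i$ available, the conclusion would simply be false, as one could twist by characters. What is needed is the algebra relation $K_i^{2l}=1$ inside $\tu_k(n)$ itself, obtained as above or quoted from \cite[Th.~2.5]{DFW}; once it is in place Schur's lemma plays no role. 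With that relation supplied (and with the nilpotency of the augmentation ideal of $\tu_k^+(n)$ justified by its finite-dimensional grading rather than by ``generated by nilpotents''), the remainder of your argument — existence of a highest weight vector, the surjection from $\fM_k(\lb)$, pairwise non-isomorphism via the highest weight character, and the descent/inflation comparison between $\tu_k(n)$ and $u_k(n)$ in the odd case — is routine and agrees with \cite[5.11,6.6]{L2}.
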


Note that, unlike the classification for simple  $G_1$-modules given in Theorem \ref{the relation between various irreducible module}, this classification depends on $l'$. We will make a comparison in Corollary \ref{missed}.

By \ref{res Lk(la)} and \eqref{odd case},  if $l'$ is odd, then every simple  $\tu_k(n)$-module on which all $K_i^l$ act as the identity is a $u_k(n)$-module and is also a restriction of a simple  $U_k(n)$-module with a restricted highest weight. However, when $l'$ is even, there are simple  $\tu_k(n)$-modules which cannot be realized in this way.

\begin{Bsp}\label{epl} Assume $l'=4$. Then $l=2$.
Let $V(2,0)=\bfU(2)/I(2,0)$,
where $$I(2,0)=\bfU(2))E_1+\bfU(2)F_1^{3}+\bfU(2))( K_1-\up^{2}))+\bfU(2))(K_2-1)).$$
Let $x_0=1+I(2,0)\in V(2,0)$. Let $V_\sZ(2,0)$ be the
$U_\sZ(2)$-submodule of $V(2,0)$ generated by $x_0$. Let
$V_k(2,0)=V_\sZ(2,0)\ot_\sZ k$.  Let $V'_k(2,0)=\tilde u_k(2)x_0$. The set $\{x_0 , F_1x_0, F_1^{(2)}x_0\}$ forms a $k$-basis for $V_k(2,0)$. Since $F_1^2=0\in U_k(2) $, $V_k'(2,0)=\text{span}_k\{x_0,F_1x_0\}$. Thus $V_k'(2,0)\not=V_k(2,0)$. Since $E_1(F_1x_0)=0$, $\text{span}_k\{F_1x_0\}$ is a submodule of $V_k'(2,0)$ (resp.,
$V_k(2,0)$). Hence, $L_k'(2,0)$ is one dimensional, while dim$\,L_k(2,0)=2$.

By regarding $L_k'(2,0)$ as the $\tu_k(\mathfrak {sl}_2)$-module $L_k'(2)$, we see that there is no simple
$U_k(\mathfrak {sl}_2)$-module $L_k(m)$ such that $L_k(m)|_{\tu_k(\mathfrak {sl}_2)}\cong L_k'(2)$.

\end{Bsp}
%\begin{Thm}$($\cite{Cox,Cox00}$)$\label{the irreducible module of s(n,r)}
%And the set  $\{\h
%L_h(\la)\mid \la\in\Ga_h^r(D)\}$ forms a complete set of
%non-isomorphic simple $s_k(n,r)_h$ modules.
%\end{Thm}
We are now ready to classify simple  $\tilde u_k(n,r)$-modules.

\begin{Lem}\label{the lemma for irreducible module of u(n,r)}
Let $L$ be a $\tilde u_k(n,r)$-module. Assume $x_0\neq 0\in L$
satisfies $k_ix_0=\ep^{\la_i}x_0$ for some $\la_i\in\mbn$ with $1\leq i\leq n$. Then
%$\sigma(\la)=\la_1+\cdots+\la_n\equiv r\,(\textrm{\rm mod}\,\,l')$.
$\lb=(\ol{\la}_1,\cdots,\ol{\la}_n)\in\lrb$.
\end{Lem}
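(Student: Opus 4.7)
The plan is to detect the $\ttk$-weight of $x_0$ via the orthogonal idempotent decomposition of $1$ in the zero part $\tu_k^0(n,r)$, and then match this weight against the scalars $\ep^{\la_i}$ to identify $\bar\la$ with the class $\bar\mu$ of some genuine composition $\mu\in\La(n,r)$.

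Concretely, by Lemma \ref{the property of Ur}(1) and Lemma \ref{the property of ur}, inside $\tu_k^0(n,r)$ one has the identity decomposition
\[
1 \;=\; \sum_{\bar\mu\in\lrb}\ttp_{\bar\mu} \;=\; \sum_{\mu\in\La(n,r)}\ttk_\mu,
\]
where $\{\ttp_{\bar\mu}\}_{\bar\mu\in\lrb}$ is a family of pairwise orthogonal idempotents forming a basis of $\tu_k^0(n,r)$. Since $x_0\neq 0$, applying this to $x_0$ forces $\ttp_{\bar\mu}x_0\neq 0$ for at least one $\bar\mu\in\lrb$. I would fix such a $\bar\mu$ and then study the action of $\ttk_i$ on the nonzero element $\ttp_{\bar\mu}x_0$.

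The key computation is to evaluate $\ttk_i\ttp_{\bar\mu}x_0$ in two ways. On the one hand, the hypothesis $\ttk_i x_0=\ep^{\la_i}x_0$ together with $[\ttk_i,\ttp_{\bar\mu}]=0$ gives $\ttk_i\ttp_{\bar\mu}x_0=\ep^{\la_i}\ttp_{\bar\mu}x_0$. On the other hand, for each $\nu\in\La(n,r)$ contributing to the sum defining $\ttp_{\bar\mu}$, Lemma \ref{the property of Ur}(2) yields $\ttk_i\ttk_\nu=\ep^{\nu_i}\ttk_\nu$; all such $\nu$ satisfy $\nu_i\equiv\mu_i\pmod{l'}$, so a single scalar $\ep^{\nu_i}=\ep^{\mu_i}$ factors out and produces $\ttk_i\ttp_{\bar\mu}x_0=\ep^{\mu_i}\ttp_{\bar\mu}x_0$. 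Comparing the two expressions and using $\ttp_{\bar\mu}x_0\neq 0$ gives $\ep^{\la_i}=\ep^{\mu_i}$ for every $i$; since $\ep$ has exact order $l'$, this forces $\bar\la_i=\bar\mu_i$ in $\mbz_{l'}$, whence $\bar\la=\bar\mu\in\lrb$ as required.

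The argument presents no genuine obstacle; the only point worth watching is that the scalar by which $\ttp_{\bar\mu}$ acts via $\ttk_i$ depends only on $\bar\mu_i\in\mbz_{l'}$, which in turn uses the primitivity of $\ep$ as an $l'$-th root of unity. This is precisely why the indexing set in Lemma \ref{the property of ur} is $\lrb$ rather than $\overline{\La(n,r)}_l$.
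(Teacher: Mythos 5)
Your proposal is correct and follows essentially the same route as the paper's own proof: decompose $1=\sum_{\bar\alpha\in\lrb}\ttp_{\bar\alpha}$, pick $\bar\beta$ with $\ttp_{\bar\beta}x_0\neq0$, evaluate the $\ttk_i$-action on $\ttp_{\bar\beta}x_0$ in two ways, and conclude $\ep^{\la_i}=\ep^{\beta_i}$ using that $\ep$ is a primitive $l'$-th root of unity. The only cosmetic difference is that the paper expands $\ttk_ix_0$ over all idempotents and then multiplies by $\ttp_{\bar\beta}$, whereas you use $[\ttk_i,\ttp_{\bar\beta}]=0$ and $\ttk_i\ttp_{\bar\beta}=\ep^{\beta_i}\ttp_{\bar\beta}$; these amount to the same computation.
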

\begin{proof}
By \ref{the property of Ur} and \ref{the property of ur}, we have
$1=\sum_{\alpha\in\lrb}\ttp_{\alpha}$ and $\ttp_{\alpha}\in\tilde
u_k(n,r)$. It follows that $x_0=\sum_{\alpha\in\lrb}(\ttp_{\alpha}x_0)$
and hence, there exist $\beta\in\lrb$ such that
$\ttp_{\beta}x_0\not=0$. By \ref{the property of Ur},
$$\ep^{\la_i}x_0=\ttk_ix_0=\ttk_i\sum_{\alpha\in\lrb}\ttp_{\alpha}x_0
=\sum_{\alpha\in\lrb}\ep^{\alpha_i}\ttp_{\alpha}x_0$$ for $1\leq i\leq n$.
Hence,
$$\ep^{\la_i}\ttp_{\beta}x_0=\ttp_{\beta}(\ep^{\la_i}x_0)=
\ttp_{\beta}\sum_{\alpha\in\lrb}\ep^{\alpha_i}\ttp_{\alpha}x_0=
\ep^{\beta_i}\ttp_{\beta}x_0$$ for $1\leq i\leq n$. Since
$\ttp_{\beta}x_0\not=0$, we have $\ep^{\la_i}=\ep^{\beta_i}$ for $1\leq
i\leq n$ and hence, $\lb=\beta\in\lrb$.
\end{proof}

Let
$$\sfX_h(l)=\sfX_h+l\mbn^n\quad\text{ and }\quad
\sfX_h(l,r)=\{\la\in \sfX_h(l)\mid \,\sg(\la)=r\}.$$ For $h=1$ and
$\la\in \sfX_1(l,r)$, the irreducible (polynomial) $G_1T$-module $\h L_1(\la)$ given in \ref{the relation between various
irreducible module} is in fact an irreducible $A_q(n,r)_1$-comodule. Hence, $\h L_1(\la)$ has a
natural $s_k(n,r)_1$-module structure.

%Recall that we may identify $\tu_k(n,r)$ (respectively,
%$s_k(n,r)$) and $\tu_k(n,r)_1$ (respectively, $s_k(n,r)_1$) by
%\ref{the relation between the little q-Schur algebra with the
%infinitesimal q-Schur algebra}.

\begin{Thm}\label{the classification of irreducible module of u(n,r)}
For $\la\in \sfX_1(l,r)$ we have $\h L_1(\la)|_{\tilde
u_k(n,r)}\cong \fL_k(\lb)$. Moreover the set
$\{\fL_k(\lb)\mid \lb\in\ol{ \sfX_1(l,r)}_{l'}\}$ forms a complete set of
non-isomorphic simple $\tilde u_k(n,r)$-modules.
\end{Thm}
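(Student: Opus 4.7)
The plan is to first establish $\h L_1(\la)|_{\tu_k(n,r)}\cong\fL_k(\lb)$ for $\la\in\sfX_1(l,r)$, and then deduce completeness by a composition-factor argument using the sandwich relation \eqref{sandwich} together with Corollary \ref{the relation between the little q-Schur algebra with the infinitesimal q-Schur algebra}.

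For the first claim, fix $\la\in\sfX_1(l,r)$. Since $|\la|=r$, $\h L_1(\la)$ is an $s_k(n,r)_1$-module, and by Theorem \ref{the relation between various irreducible module} its restriction $\h L_1(\la)|_{G_1}\cong L_1(\la)$ is a simple $u_k(n,r)_1$-module. The inclusion $u_k(n,r)_1\han\tu_k(n,r)$ from \eqref{sandwich} then forces $\h L_1(\la)|_{\tu_k(n,r)}$ to be simple as well. Now $\h L_1(\la)$ carries a highest weight vector $v_\la$ with $E_iv_\la=0$ and $K_iv_\la=\ep^{\la_i}v_\la$, so via $\zr:\tu_k(n)\twoheadrightarrow\tu_k(n,r)$ the assignment $X+\fI_k(\lb)\map X\cdot v_\la$ gives a well-defined $\tu_k(n)$-homomorphism $\fM_k(\lb)\to\h L_1(\la)$. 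Its image is a nonzero $\tu_k(n,r)$-submodule of the simple module $\h L_1(\la)|_{\tu_k(n,r)}$, hence equal to all of $\h L_1(\la)$. Since $\fL_k(\lb)$ is the unique simple quotient of $\fM_k(\lb)$, we obtain $\h L_1(\la)|_{\tu_k(n,r)}\cong\fL_k(\lb)$.

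For completeness, let $L$ be an arbitrary simple $\tu_k(n,r)$-module. Then $L$ is a quotient of the left regular module $\tu_k(n,r)$, so it appears as a composition factor of $\tu_k(n,r)$ viewed over itself. Under the $\tu_k(n,r)$-module inclusion $\tu_k(n,r)\han s_k(n,r)_1$ (both actions being left multiplication), $L$ therefore also appears as a composition factor of $s_k(n,r)_1|_{\tu_k(n,r)}$. Now any $s_k(n,r)_1$-composition series of $s_k(n,r)_1$ has simple quotients drawn from $\{\h L_1(\mu)\mid\mu\in\sfX_1(l,r)\}$, and by the first paragraph each of these restricts to a simple $\tu_k(n,r)$-module; consequently, the same filtration is a $\tu_k(n,r)$-composition series. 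Thus $L\cong\h L_1(\mu)|_{\tu_k(n,r)}\cong\fL_k(\mb)$ for some $\mu\in\sfX_1(l,r)$. Distinctness is immediate: if $\lb\neq\mb$ in $(\mbz_{l'})^n$, then $\fL_k(\lb)\not\cong\fL_k(\mb)$ already as $\tu_k(n)$-modules by Theorem \ref{the irreducible module of u(n)}, and hence as $\tu_k(n,r)$-modules.

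The main obstacle is the identification step: the whole proof rests on knowing \emph{a priori} that $\h L_1(\la)|_{\tu_k(n,r)}$ is simple, and this is precisely what the $G_1$-classification of Theorem \ref{the relation between various irreducible module} combined with the sandwich \eqref{sandwich} delivers. Once simplicity is secured, the highest-weight recipe for $\fL_k(\lb)$ and the regular-module composition-factor argument for completeness both proceed formally.
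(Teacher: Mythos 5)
Your proposal is correct, and its first half is essentially the paper's argument: simplicity of $\h L_1(\la)|_{\tu_k(n,r)}$ is forced by Theorem \ref{the relation between various irreducible module} together with the sandwich $u_k(n,r)_1\han\tu_k(n,r)\han s_k(n,r)_1$, and the identification with $\fL_k(\lb)$ is made through a highest weight vector (the paper cites \cite[5.10(b), 5.11]{L2} here; you should likewise justify the existence of $v_\la$ with $K_iv_\la=\ep^{\la_i}v_\la$, e.g.\ via the weight-space statement \ref{the property of Ur}(2), rather than assert it). Where you genuinely diverge is the completeness step. The paper takes an arbitrary simple $\tu_k(n,r)$-module $L$, inflates it to $\tu_k(n)$ to produce a highest weight vector, uses Lemma \ref{the lemma for irreducible module of u(n,r)} to see the weight lies in $\lrb$, then induces up to $s_k(n,r)$: inside $s_k(n,r)\ot_{\tu_k(n,r)}L$ it forms the highest weight submodule generated by $\ttk_\la\ot x_0$, takes its irreducible head, identifies that head with $\h L_1(\la)$ by Cox's classification \eqref{the irreducible module of s(n,r)}, and finally matches $L$ with $\h L_1(\la)|_{\tu_k(n,r)}$ via Lusztig's classification \ref{the irreducible module of u(n)}. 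Your Jordan--H\"older argument on the regular module ($L$ is a composition factor of $\tu_k(n,r)\han s_k(n,r)_1$, and any $s_k(n,r)_1$-composition series of $s_k(n,r)_1$ restricts to a $\tu_k(n,r)$-composition series because each $\h L_1(\mu)|_{\tu_k(n,r)}$ is already known to be simple) reaches the same conclusion more economically: it avoids the induced module, the highest-weight analysis of $L$, and Lemma \ref{the lemma for irreducible module of u(n,r)} altogether, while still resting on the same two inputs (Cox's classification and the simplicity of restrictions). What the paper's longer route buys is an explicit matching of highest weights, exhibiting $L$ directly as the restriction of a concrete highest weight $s_k(n,r)$-module; your route instead leans on Theorem \ref{the irreducible module of u(n)} only for pairwise non-isomorphism, which is a legitimate and slightly cleaner division of labor.
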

\begin{proof}
By \cite{Cox,Cox00} (cf. \ref{the relation between various irreducible module}),
\begin{equation}\label{the irreducible module of s(n,r)}
{\text{the set }\{\h
L_1(\la)\mid \la\in \sfX_1(l,r)\}\text{ forms a complete set}}\atop {\text{ of
non-isomorphic simple  $s_k(n,r)_1$-modules}.}
\end{equation}
 Thus, it is enough to prove that for each $\la\in \sfX_1(l,r)$, $\h L_1(\la)|_{\tu_k(n,r)}$ is irreducible, and every irreducible $\tu_k(n,r)$-module is isomorphic to $\fL_k(\mb)$ for some $\mb\in\ol{ \sfX_1(l,r)}_{l'}$

By \ref{the relation between various
irreducible module}, for $\la\in \sfX_1(l,r)$, we see that $\h
L_1(\la)|_{G_1}$ is a simple  $G_1$-module at level $r$. Hence, by \eqref{Gh=Mh}, $\h
L_1(\la)|_{{u_k(n,r)}_1}$ is a simple  $u_k(n,r)_1$-module.
Now the inclusions $u_k(n,r)_1\han \tilde u_k(n,r)\han s_k(n,r)_1=s_k(n,r)$ given in \eqref{sandwich}
force that $\h L_1(\la)|_{\tilde u_k(n,r)}$ is a simple $\tu_k(n,r)$-module. Hence, inflation gives a simple $\tu_k(n)$-module. Since $\h L_1(\la)$ is a highest  weight $s_k(n,r)$-module,
 by \cite[5.10(b)]{L2}, there exists $x_0\in\h L_1(\la)$ such that $E_ix_0=0$ and $K_ix_0=\varepsilon^{\la_i}x_0$ for all $i$. Now, the argument in \cite[5.11]{L2} implies that $\h L_1(\la)|_{\tilde u_k(n,r)}$ is isomorphic to $\fL_k(\lb)$.

On the other hand,
let $L$ be a simple  $\tilde u_k(n,r)$-module, then, by inflation, $L$ is a simple  $\tilde u_k(n)$-module. (If $l'$ is an odd
number, $L$ is also a simple  $u_k(n)$-module.) Hence,
%\ref{the irreducible module of u(n)},
there is some $x_0\neq 0\in
L$ such that $E_ix_0=0$ and $k_jx_0=\ep^{\la_j}x_0$ for $1\leq
i\leq n-1,1\leq j\leq n$, where
$\la=(\la_1,\cdots,\la_n)\in\mbn^n$. By \ref{the lemma for
irreducible module of u(n,r)}, we have $\bar\la\in\lrb$. So, without loss, we may choose
 $\la\in\La(n,r)$. We consider the
$s_k(n,r)$-module $s_k(n,r)\ot_{\tu_k(n,r)}L$. Let $V$ be the
$s_k(n,r)$-submodule of $s_k(n,r)\ot_{\tu_k(n,r)}L$ generated by
$\ttk_\la\ot x_0$. Then $V=s_k(n,r)(\ttk_\la\ot
x_0)=\tu_k^-(n,r)(\ttk_\la\ot x_0)$. It is clear that the
$\ttk_\la\ot x_0$ is the highest weight vector of $V$. Hence there
is a unique maximal $s_k(n,r)$-submodule of $V$, say $V_{max}$.
Let $L'=V/V_{max}$. Then $L'$ is a simple  $s_k(n,r)$-module
and hence $L'\cong\h L_1(\la)$ by \eqref{the irreducible module of
s(n,r)}. Since $\tte_i.(\ttk_\la\ot x_0)=0$ and
$\ttk_j.(\ttk_\la\ot x_0)=\ep^{\la_j}(\ttk_\la\ot x_0)$, %=\ep^{\mu_j}(\ttk_\la\ot x_0)$,
by \ref{the irreducible module of
u(n)}, we have $L\cong\h L_1(\la)|_{\tu_k(n,r)}\cong \fL_k(\lb)$.
The proof is completed.
\end{proof}

\begin{Coro}
Every simple  $u_k(n)$-module when $l'=l$ is odd (resp. every simple  $\tu_k(n)$-module when $l'$ is even) is an inflation of a simple   $\tu_k(n,r)$-module for some r. \end{Coro}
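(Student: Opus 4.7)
My approach is to combine the two classification theorems \ref{the irreducible module of u(n)} and \ref{the classification of irreducible module of u(n,r)}, and reduce the assertion to a purely combinatorial lifting statement about the sets $\ol{\sfX_1(l,r)}_{l'}$. Given any simple module $L$ (of $u_k(n)$ or $\tu_k(n)$, according to the parity of $l'$), \ref{the irreducible module of u(n)} identifies $L\cong\fL_k(\lb)$ for a unique $\lb\in(\mbz_{l'})^n$. If I can produce some $r\geq 0$ and a weight $\mu\in\sfX_1(l,r)$ with $\ol{\mu}_{l'}=\lb$, then \ref{the classification of irreducible module of u(n,r)} endows $\fL_k(\lb)$ with a simple $\tu_k(n,r)$-module structure, and inflation along $\zeta_r:\tu_k(n)\twoheadrightarrow\tu_k(n,r)$ then recovers $L$. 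In the odd case, \ref{unr} ensures that this inflation factors through the quotient $u_k(n)$, so both parities are handled uniformly.

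The proof thus reduces to the combinatorial claim that $\bigcup_{r\geq 0}\ol{\sfX_1(l,r)}_{l'}=(\mbz_{l'})^n$. The plan here is a shift-into-the-dominant-chamber argument. First, pick nonnegative integer representatives $\la_1,\ldots,\la_n$ of the coordinates of $\lb$, and set $\mu_i:=\la_i+M$ for a single large integer $M$ chosen to be a multiple of $l'$, so that automatically $\ol{\mu}_{l'}=\lb$. To certify $\mu\in\sfX_1(l)$, I would construct a decomposition $\mu=\nu+l\bfa$ with $\nu\in\sfX_1$ and $\bfa\in\mbn^n$ by sweeping upward from the bottom: take $\nu_n\in\{0,\ldots,l-1\}$ to be the residue of $\mu_n$ modulo $l$, and inductively choose $\nu_i$ to be the unique integer in $\{\nu_{i+1},\ldots,\nu_{i+1}+l-1\}$ congruent to $\mu_i$ modulo $l$. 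Then $\nu\in\sfX_1$ by construction, and $a_i:=(\mu_i-\nu_i)/l$ automatically lies in $\mbz$; only the sign of each $a_i$ needs further attention.

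The only real obstacle is this nonnegativity check, but it is easily dispatched: the recursive construction forces the crude a priori bound $\nu_i\leq n(l-1)$, so it suffices to take $M\geq nl$ at the outset. Setting $r:=\sigma(\mu)$ then yields $\mu\in\sfX_1(l,r)$ with $\ol{\mu}_{l'}=\lb$, completing the combinatorial step and hence the corollary. I anticipate no further subtlety; once the two classifications are in hand, everything beyond this single shift-and-decompose argument amounts to quoting \ref{the classification of irreducible module of u(n,r)}.
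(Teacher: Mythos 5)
Your proposal is correct and follows essentially the same route as the paper: both reduce the statement, via Theorems \ref{the irreducible module of u(n)} and \ref{the classification of irreducible module of u(n,r)}, to the combinatorial fact that $\bigcup_{r\geq 0}\ol{\sfX_1(l,r)}_{l'}=(\mbz_{l'})^n$. The paper disposes of this in one line using $\mbz^n=\sfX_1+l\mbz^n$ and $\ol{(l\mbn^n)}_{l'}=\ol{(l\mbz^n)}_{l'}$, whereas you make the same lifting explicit with a shift by a large multiple of $l'$ and a greedy decomposition $\mu=\nu+l\mathbf{a}$, which is just a more detailed rendering of the identical idea.
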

\begin{proof}
Since $\mbz^n=\sfX_1(T)+l\mbz^n$, it follows that
$\bin_{r\geq 0}\ol{\sfX_1(l,r)}_{l'}=\ol{\sfX_1(l)}_{l'}=\ol{(\sfX_1)}_{l'}+\ol{(l\mbn^n)}_{l'}
=\ol{(\sfX_1)}_{l'}+\ol{(l\mbz^n)}_{l'}=\mbz_{l'}^n$. Thus, by \ref{the classification of irreducible module of u(n,r)}, inflation via epimorphisms $\tu_k(n)\to\tu_k(n,r)$ gives simple  $\tu_k(n)$-modules indexed by $\mbz_{l'}^n$. Now the assertion follows from \ref{the irreducible module of u(n)}.
\end{proof}

We remark that restricted simple $U_k(n)$-modules $L_k(\la)$ with $\la\in\sfX_1$ does not cover all simple  $\tu_k(n)$-module when $l'$ is even. The above result shows that simple  $G_1T$-modules {\it does} cover all simple $\tu_k(n)$-module.

\begin{Coro} \label{missed}We have, for $\la\in\sfX_1$ with $\sigma(\la)=r$, $L_1(\la)\cong\fL_k(\lb)|_{u_k(n,r)_1}$ where $\lb\in\ol{(\sfX_1)}_{l'}$. In other words, $\{\fL_k(\nu)\mid\nu\in\ol{(\sfX_1)}_{l'}\}$ is a complete set of all simple  $G_1$-modules.
\end{Coro}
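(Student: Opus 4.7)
The plan is to chain two restrictions: first from $s_k(n,r)_1$ to $\tu_k(n,r)$ via Theorem \ref{the classification of irreducible module of u(n,r)}, then further down to $u_k(n,r)_1$ using the sandwich \eqref{sandwich}, identifying the outcome with a $G_1$-module through \eqref{Gh=Mh}. Nothing new needs to be set up; both assertions should fall out of results already in place.

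For the first isomorphism, I would fix $\la\in\sfX_1$, set $r=\sigma(\la)$, and note $\la\in\sfX_1(l,r)$ automatically. Theorem \ref{the classification of irreducible module of u(n,r)} then delivers $\h L_1(\la)|_{\tu_k(n,r)}\cong\fL_k(\lb)$. Restricting further along $u_k(n,r)_1\subseteq\tu_k(n,r)$ and invoking the identification $k[G_1]\cong A_q(n)/I_1$ from \eqref{Gh=Mh}, which in homogeneous degree $r$ identifies level-$r$ $G_1$-modules with $u_k(n,r)_1$-modules, together with $\h L_1(\la)|_{G_1}\cong L_1(\la)$ from Theorem \ref{the relation between various irreducible module}, yields
\[
\fL_k(\lb)|_{u_k(n,r)_1}\cong \h L_1(\la)|_{u_k(n,r)_1}\cong \h L_1(\la)|_{G_1}\cong L_1(\la).
\]

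For the ``complete set'' statement, Theorem \ref{the relation between various irreducible module} gives $\{L_1(\la)\mid\la\in\sfX_1\}$ as a full set of non-isomorphic simple $G_1$-modules. Combining with the first part, the $\fL_k(\lb)|_{u_k(n,r)_1}$ as $\la$ ranges over $\sfX_1$ (with the appropriate $r=\sigma(\la)$) exhaust all simple $G_1$-modules, and by definition the $\lb$ range over $\ol{(\sfX_1)}_{l'}$. So the remaining task is to show that distinct $\la,\mu\in\sfX_1$ produce distinct labels $\lb,\mb\in\ol{(\sfX_1)}_{l'}$, which guarantees the $\fL_k(\nu)$ in question are mutually non-isomorphic and give a bijective parametrization.

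This injectivity of $\sfX_1\to\ol{(\sfX_1)}_{l'}$ is the only genuine obstacle, and it is a brief arithmetic check that I would execute as follows. If $\la,\mu\in\sfX_1$ satisfy $\la_i\equiv\mu_i\pmod{l'}$, write $\la_i-\mu_i=l't_i$ with $t_i\in\mbz$. The defining inequalities $0\leq\la_i-\la_{i+1}<l$ and $0\leq\mu_i-\mu_{i+1}<l$ (with the convention $\la_{n+1}=\mu_{n+1}=0$) force $l'(t_i-t_{i+1})=(\la_i-\la_{i+1})-(\mu_i-\mu_{i+1})\in(-l,l)$, and since $l\leq l'$ this yields $t_i=t_{i+1}$; the terminal case $i=n$ gives $l't_n=\la_n-\mu_n\in(-l,l)$, so $t_n=0$. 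Hence every $t_i=0$ and $\la=\mu$, uniformly for $l'=l$ odd and $l'=2l$ even. This finishes both assertions.
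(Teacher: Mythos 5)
Your proposal is correct and follows essentially the route the paper intends: the corollary is left unproved there because it is exactly the chain $\h L_1(\la)|_{\tu_k(n,r)}\cong\fL_k(\lb)$ from Theorem \ref{the classification of irreducible module of u(n,r)}, further restriction through the sandwich \eqref{sandwich} and the identification \eqref{Gh=Mh}, combined with the completeness statement of Theorem \ref{the relation between various irreducible module}. Your explicit arithmetic check that $\sfX_1\to\ol{(\sfX_1)}_{l'}$, $\la\mapsto\lb$, is injective is sound and merely makes precise the identification the paper asserts in the remark following the corollary.
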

Note that this classification is the same as the one given in \ref{the relation between various irreducible module} since  the set $\ol{(\sfX_1)}_{l'}$ can be identified with $\sfX_1$ via the map
$\sfX_1\to \ol{(\sfX_1)}_{l'}, \la\mapsto \lb.$
Thus, for the example $L_k'(2,0)$ constructed in \ref{epl}, its restriction to  $G_1$ is again irreducible and isomorphic to $L_1(0,0)$.

\begin{Rems} \label{l vz l'}
(1) If $\la\in\sfX_1$, \ref{res Lk(la)} and \ref{the classification of irreducible module of u(n,r)} imply that restriction induces isomorphisms $L_k(\la)\cong \h L_1(\la)$, $\h L_1(\la)\cong\fL_k(\lb)$  and $\fL_k(\lb)\cong L_1(\la)$.

(2) When $l'=l$ is odd, we established in \cite{DFW} that a basis for $\tu_k(n,r)$ is indexed by $\ol{\Xi(n,r)}_l$, while a basis for $U_k(n,r)$ is indexed by $\Xi(n,r)$. Similarly,
since $\ol{\La^+(n,r)}_{l}=\ol{ \sfX_1(l,r)}_{l}$, simple $\tu_k(n,r)$-modules are indexed by
 $\ol{\La^+(n,r)}_l$, while simple $U_k(n,r)$-modules are indexed by
 $\La^+(n,r)$. Thus, baring on the index sets gives the counterparts for little  $q$-Schur algebras. However, if $l'$ is even, then $\ol{\La^+(n,r)}_{l'}$ is even {\it not} a subset of $\ol{ \sfX_1(l,r)}_{l'}$ and the classification is quite different.
\end{Rems}

Note that, as a comparison, \eqref{the irreducible module of s(n,r)} shows that the classification for the infinitesimal $q$-Schur algebra is independent of $l'$.

\section{The baby transfer map}
There is an epimorphism $\psi_{r+n,r}:U_k(n,r+n)\twoheadrightarrow U_k(n,r)$, called the transfer map in \cite[\S2]{L00}. This map can be geometrically constructed (\cite{Groj} and \cite[\S2]{L00}) and algebraically constructed by quantum coordinate algebras and quantum determinant (\cite[5.4]{Du95}). Since $\psi_{r+n,r}$ satisfies
$$\zeta_{r+n}(E_i)\longmapsto \zeta_{r}(E_i),\quad\zeta_{r+n}(F_i)\longmapsto \zeta_{r}(F_i),\quad
\zeta_{r+n}(K_i)\longmapsto \ep\zeta_{r}(K_i),$$
its restriction induces an epimorphism
\begin{equation}\label{transfer map}
\psi_{r+n,r}:\tu_k(n,r+n)\twoheadrightarrow \tu_k(n,r).
\end{equation}

In this section, we introduce the
{\it baby transfer map} $\rhol:\tu_k(n,r+l')\ra \tu_k(n,r)$ and use it to prove that, up to isomorphism, there only finitely many little $q$-Schur algebras. By these maps, we will understand
the classification of simple  modules for the little $q$-Schur
algebras from a different angle.

\begin{Prop}\label{the map rhol}
There is an algebra epimorphism
$\rhol:\tu_k(n,r+l')\twoheadrightarrow \tu_k(n,r)$ satisfying
$$\tte_i'\map\tte_i,\quad \ttf_i'\map\ttf_i,\quad \ttk_j'\map\ttk_j,$$
where $\tte_i',\ttf_i',\ttk_i'$ are the corresponding $\tte_i,\ttf_i,\ttk_i$ for $\tu_k(n,r+l')$.
Moreover, for $A\in\Xi^\pm(n)_1$ and $\lb\in\ol{\La(n,r+l'-\sg(A))}_{l'}$,
\begin{equation}\label{the element rhol([[A,r]])}
\rhol([\![A+\diag(\lb),r+l']\!])=
\begin{cases} [\![A+\diag(\lb),r]\!]& \text{if } \lb\in\ol{\La(n,r-\sg(A))}_{l'}\\
0 &\text{otherwise}.
\end{cases}
\end{equation}
\end{Prop}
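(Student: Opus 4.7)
The plan is to construct $\rhol$ by coalgebra duality and then read off both the generator formula and the basis formula from this construction.

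The main step is to produce the map. By Corollary~\ref{the relation between the little q-Schur algebra with the infinitesimal q-Schur algebra} we have $\tu_k(n,s)\cong\tilde\pi(A_q(n,s)_1)^*$ for every $s\geq 0$. In the bialgebra $A_q(n)/\tilde I_1$, the ideal $\tilde I_1$ contains each $c_{ii}^{l'}-1$, so $c_{ii}^{l'}\equiv 1$. Hence for every $f\in A_q(n,r)$ one has $\tilde\pi(f)=\tilde\pi(fc_{11}^{l'})$ with $fc_{11}^{l'}\in A_q(n,r+l')$, and therefore
\[
\tilde\pi(A_q(n,r)_1)\han\tilde\pi(A_q(n,r+l')_1)
\]
as subcoalgebras of $A_q(n)/\tilde I_1$. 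Dualizing this coalgebra inclusion produces a surjective algebra homomorphism $\rhol:\tu_k(n,r+l')\twoheadrightarrow\tu_k(n,r)$ which, on the level of functionals, is just the restriction map.

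Next I would establish the basis formula \eqref{the element rhol([[A,r]])}. By \eqref{cA^*}, each basis element satisfies
\[
[\![A+\diag(\lb),s]\!]=\vs^{-d_{A+\diag(\la)}}(c^{A+\diag(\lb)}+\tilde I_1)^*.
\]
A short computation from the definition of $d_A$ shows that $d_{A+\diag(\la+l'\be_u)}-d_{A+\diag(\la)}$ is an integer multiple of $l'$ for each $u$, so $\vs^{d_{A+\diag(\la)}}$ depends only on the class $\lb$ and is the same whether $\la$ is chosen from $\La(n,r-\sg(A))$ or from $\La(n,r+l'-\sg(A))$. When $\lb\in\overline{\La(n,r-\sg(A))}_{l'}$, the basis element $c^{A+\diag(\lb)}+\tilde I_1$ of $\tilde\pi(A_q(n,r+l')_1)$ already lies in $\tilde\pi(A_q(n,r)_1)$ and is a basis element there, so the restriction of its dual is the corresponding dual basis functional for $\tu_k(n,r)$; otherwise this element is linearly independent from the basis of $\tilde\pi(A_q(n,r)_1)$ and the restriction vanishes. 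Matching the (common) scalars yields \eqref{the element rhol([[A,r]])}.

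The generator identities then follow from the basis formula. Using $\zeta_r(E_i)=E_{i,i+1}(\bfl,r)$ one gets $\tte_i=\sum_{\lb\in\overline{\La(n,r-1)}_{l'}}[\![E_{i,i+1}+\diag(\lb),r]\!]$, and similar expansions hold at level $r+l'$ and for $\ttf_i$; for the torus part one has $\ttk_j=\sum_{\lb}\vs^{\lb_j}[\![0+\diag(\lb),r]\!]$, which is well-defined since $\vs^{l'}=1$. Applying $\rhol$ termwise and using the containment $\overline{\La(n,s)}_{l'}\han\overline{\La(n,s+l')}_{l'}$ (by adding $l'$ to any one coordinate) immediately recovers $\tte_i$, $\ttf_i$, $\ttk_j$ on the target side. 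The step I anticipate being most delicate is the $l'$-invariance of $\vs^{d_{A+\diag(\la)}}$: it requires unpacking the bilinear expression defining $d_A$ and checking that each contribution from $\la_u\mapsto\la_u+l'$ is divisible by $l'$. An alternative via iterating the ordinary transfer map $\psi_{r+n,r}$ only produces shifts by multiples of $n$, so for general $n$ the coalgebra-inclusion construction above appears to be the natural route.
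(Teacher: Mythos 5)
Your proposal is correct and takes essentially the same route as the paper: the epimorphism is obtained by dualizing the inclusion $\tilde\pi(A_q(n,r)_1)\han\tilde\pi(A_q(n,r+l')_1)$ coming from $c_{ii}^{l'}\equiv 1$ in $A_q(n)/\tilde I_1$, the formula \eqref{the element rhol([[A,r]])} is read off from the dual of the extended basis \eqref{cA} via \eqref{cA^*}, and the generator identities follow by expanding $\tte_i,\ttf_i,\ttk_j$ in the $[\![\,\cdot\,,\,\cdot\,]\!]$-basis. The only (welcome) difference is that you make explicit the $l'$-invariance of the scalar $\vs^{d_{A+\diag(\la)}}$, which the paper dispatches with the remark that $\ep$ is a primitive $l'$-th root of unity.
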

\begin{proof}Consider the epimorphism $\tilde\pi:A_q(n)/J_1\to A_q(n)/\tilde I_1$ given in \eqref{tpi}. Since every monomial $m$ in $A_{q}(n,r)_1$ has the same homomorphic image as the monomial $c_{11}^{l'}m\in A_{q}(n,r+l')_1$, it follows that $\tilde\pi(A_{q}(n,r)_1)\han\tilde\pi({A_{q}(n,r+l')_1})$ and
the basis $\big\{c^{A+\diag(\la)}+\tilde I_1\bm| A\in\Xi^\pm(n)_1,\,\lb\in\ol{\La(n,r-\sg(A))}_{l'}\big\}$ for $\tilde\pi (A_{q}(n,r)_1)$ given in \eqref{cA} extends to a basis for $\tilde\pi(A_{q}(n,r+l')_1)$. By taking dual and \ref{the
relation between the little q-Schur algebra with the infinitesimal
q-Schur algebra}, there is an algebra epimorphism
$\rhol:\tu_k(n,r+l')\ra
\tu_k(n,r)$.

Let
$\big\{(c^{A+\diag(\la)}+\tilde I_1)^*\bm| A\in\Xi^\pm(n)_1,\,\lb\in\ol{\La(n,r'-\sg(A))}_{l'}\big\}$
be the dual basis for
$\tu_k(n,r')$. It is now clear that, for
$A\in\Xi^\pm(n)_1$ and $\lb\in\ol{\La(n,r+l'-\sg(A))}_{l'}$,
\begin{equation*}\rhol((c^{A+\diag(\lb)}+\tilde I_1)^*)=
\begin{cases}
(c^{A+\diag(\lb)}+\tilde I_1)^* & \text{if } \lb\in\ol{\La(n,r-\sg(A))}_{l'}\\
0 & \text{otherwise}.
\end{cases}
\end{equation*} This together with  \eqref{cA^*} implies
\eqref{the element rhol([[A,r]])} since $\ep$ is a primitive
$l'$-th root of unity. Since $\ttp_{\lb}=[\![\diag(\lb),r]\!]$, we
have for $\lb\in\ol{\La(n,r+l')}_{l'}$
\begin{equation*}\rhol({\ttp_{\lb}})=
\begin{cases}
\ttp_{\lb}& \text{if } \lb\in\lrb\\
0& \text{otherwise}.
\end{cases}
\end{equation*}
Hence,
$\rhol(\ttk_i')=\rhol(\sum_{\lb\in\ol{\La(n,r+l')}_{l'}}
\ep^{\la_i}\ttp_{\lb})=\sum_{\lb\in\lrb}\ep^{\la_i}\ttp_{\lb}=\ttk_i$.
Similarly, we can prove $\rhol(\tte_i')=\tte_i$ and
$\rhol(\ttf_i')=\ttf_i$.
\end{proof}

Observe that, if $r\geq (n-1)(l'-1)$, then, for $\la\in\La(n,r+l')$, $\sum_{1\leq i\leq n}\la_i=r+l'\geq n(l'-1)+1$. Thus, $\la_i\geq l'$ for some $i$. Consequently,
$\ol\la=\ol{(\la_1,\cdots,\la_{i}-l',\cdots,\la_n)}\in\ol{\La(n,r)}_{l'}$. Thus, we see that
\begin{equation}\label{Lem}
\ol{\La(n,r)}_{l'}=\ol{\La(n,r+l')}_{l'} \text{ whenever }r\geq (n-1)(l'-1).
\end{equation}

\begin{Coro}\label{Cor1}
We have $\ti u_k(n,r)\cong\ti u_k(n,r+l')$ for $r\geq (l-1)(n^2-n)+(n-1)(l'-1)$. Hence, up to isomorphism,
there are only finitely many little $q$-Schur algebras.
\end{Coro}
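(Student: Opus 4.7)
The plan is to show that the baby transfer map $\rhol:\tu_k(n,r+l')\twoheadrightarrow\tu_k(n,r)$ of \ref{the map rhol}, already known to be surjective, becomes injective (and hence an isomorphism) once $r\geq (l-1)(n^2-n)+(n-1)(l'-1)$. Since by \eqref{the element rhol([[A,r]])} $\rhol$ carries each basis vector $[\![A+\diag(\lb),r+l']\!]$ either to the corresponding basis vector $[\![A+\diag(\lb),r]\!]$ or to $0$, the proof reduces to identifying the two indexing sets of basis labels provided by \ref{basis-tuk(n,r)}, namely pairs $(A,\lb)$ with $A\in\Xi(n)_1^\pm$, $\sg(A)\leq s$ and $\lb\in\ol{\La(n,s-\sg(A))}_{l'}$ for $s=r$ and $s=r+l'$, respectively.

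The identification rests on two routine estimates. First, every $A\in\Xi(n)_1^\pm$ has zero diagonal and off-diagonal entries strictly less than $l$, so $\sg(A)\leq (l-1)(n^2-n)$. Hence, once $r\geq (l-1)(n^2-n)$, the condition $\sg(A)\leq r$ is automatic, and the admissible $A$'s for both algebras consist of the entire set $\Xi(n)_1^\pm$. Second, under the sharper bound on $r$, one has $r-\sg(A)\geq (n-1)(l'-1)$ for every such $A$, so \eqref{Lem} applies and gives
$$\ol{\La(n,r-\sg(A))}_{l'}=\ol{\La(n,r+l'-\sg(A))}_{l'}.$$
Combining the two, the indexing sets of the bases of $\tu_k(n,r+l')$ and $\tu_k(n,r)$ coincide, so $\rhol$ restricts to a bijection between them and is therefore an algebra isomorphism.

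The finiteness claim follows immediately: past the threshold $(l-1)(n^2-n)+(n-1)(l'-1)$ the isomorphism type of $\tu_k(n,r)$ depends only on $r\bmod l'$, yielding at most $l'$ classes beyond the threshold, augmented by the finitely many classes for smaller $r$. I expect no substantive obstacle beyond careful bookkeeping of the two admissible index sets; the heavy lifting of constructing $\rhol$ and computing its action on basis vectors has already been carried out in \ref{the map rhol}, so the present corollary is essentially a counting argument built on those ingredients together with the stabilization identity \eqref{Lem}.
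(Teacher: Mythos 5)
Your proposal is correct and follows essentially the same route as the paper: both rely on the epimorphism $\rho_{r+l',r}$ from \ref{the map rhol}, the bound $\sg(A)\leq (l-1)(n^2-n)$ for $A\in\Xi^\pm(n)_1$, and the stabilization \eqref{Lem} to match the basis index sets of \ref{basis-tuk(n,r)}. The only cosmetic difference is that you conclude injectivity by exhibiting a basis-to-basis bijection under $\rho_{r+l',r}$, while the paper simply compares dimensions via \eqref{dim} and invokes surjectivity.
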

\begin{proof}
By \ref{the map rhol}, there exists an algebra epimorphism from $\ti u_k(n,r+l')$ to $\ti u_k(n,r)$. Thus, it is enough to prove that $\dim_k\ti u_k(n,r+l')=\dim_k\ti u_k(n,r)$ for $r\geq (l-1)(n^2-n)+(n-1)(l'-1)$. By \cite[8.2]{DFW} and \cite[6.8]{Fu07}, we have
\begin{equation}\label{dim}
\dim_k\tu_k(n,r)=|\{(A,\lb)\mid A\in\Xi^\pm(n)_1,\,\lb\in\ol{\La(n,r-\sg(A))}_{l'}\}|.
\end{equation}
If $r\geq (l-1)(n^2-n)+(n-1)(l'-1)$, then, for any $A\in\Xi^\pm(n)_1$,
$$
r-\sg(A)\geq r-(n^2-n)(l-1)\geq (n-1)(l'-1).
$$
Thus, by \eqref{Lem}, $\ol{\La(n,r-\sg(A))}_{l'}=\ol{\La(n,r-\sg(A)+l')}_{l'}$. Consequently,  \eqref{dim}, implies $\dim_k\ti u_k(n,r+l')=\dim_k\ti u_k(n,r)$ whenever $r\geq (l-1)(n^2-n)+(n-1)(l'-1)$. This completes the proof.
\end{proof}

%\begin{Rem}
We now look at the second application of the baby transfer map.
If $l'$ is odd, then $l'=l$ by definition and the index set of the classification given in \ref{the classification of irreducible module of u(n,r)} becomes
\begin{equation}\label{rem}
\aligned
\ol{ \sfX_1(l,r)}_{l}&=\{\lb\mid \la\in \sfX_1,\sg(\la)\leq r,\ol{\sg(\la)}=\ol{r}\}\\
&=\ol{\sfX_1(l,r-l)}_{l}\cup\{
\lb\mid\la\in \sfX_1,\,\sg(\la)=r
\}.\\\endaligned
\end{equation}
This indicates, by  \ref{the
classification of irreducible module of u(n,r)} and \ref{the map rhol}, that the simple
$\tu_k(n,r)$-modules can be divided into two classes, one consists of
the simple  $\tu_k(n,r)$-modules which can be obtained by restriction from the
simple  $U_k(n,r)$-modules with restricted highest weights and the other consists of the
simple  $\tu_k(n,r)$-module which are inflations of the
simple  $\tu_k(n,r-l)$-module via the map $\rho_{r,r-l}$. The disjointness of the two classes can be seen as follows.
%\end{Rem}

Suppose $n\geq r$. Let $\og=(1^r)\in\La(n,r)$. Then $\ttk_\og=\ttp_\og\in\tu_k(n,r)$. By \cite[7.1]{Fu05} we have
$\ttk_\og\tu_k(n,r)\ttk_\og$ is isomorphic to the Hecke algebra $\sH_r=\sH(\fS_r)$. We will identify $\ttk_\og\tu_k(n,r)\ttk_\og$ with $\sH_r$. Thus, we may define the ``baby'' Schur functor $F_r$ as follows:
\begin{equation*}
F_r:\text{Mod}(\tu_k(n,r))\lra \text{Mod}(\sH_r),\quad
V\longmapsto\ttk_\og V.
\end{equation*}
The functor $F_r$ induces a group homomorphism over the Grothendieck groups
$$\ttF_r:K(\tu_k(n,r))\ra K(\sH_r).$$
Here $K(A)$ denotes the Grothendieck group of $\text{Mod}(A)$.

By \ref{the map rhol} the category $\text{Mod}(\tu_k(n,r-l'))$ can be regarded as a full subcategory of $\text{Mod}(\tu_k(n,r))$ via $\rhol$ and hence we may view $K(\tu_k(n,r-l'))$ as a subgroup of
$K(\tu_k(n,r))$.

\begin{Prop}
Assume $l'$ is odd and $n\geq r$. Then $\ttF_r$ is surjective and $\ker(\ttF_r)=
K(\tu_k(n,r-l))$.
\end{Prop}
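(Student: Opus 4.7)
The plan is to exploit the disjoint decomposition \eqref{rem} of the index set of simple $\tu_k(n,r)$-modules and analyze $F_r$ on each piece. Split simples into class (A), those $\fL_k(\lb)$ whose canonical representative $\la\in\sfX_1$ satisfies $\sg(\la)=r$, and class (B), those with $\lb\in\ol{\sfX_1(l,r-l)}_{l}$. Class (B) is exactly the collection of simples inflated from $\tu_k(n,r-l)$ via $\rho_{r,r-l}$, and since inflation along a surjection is exact and fully faithful, $K(\tu_k(n,r-l))$ injects into $K(\tu_k(n,r))$ with image the $\mbz$-span of class (B).

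The key computation is $\rho_{r,r-l}(\ttk_\og)=0$. Applying \eqref{the element rhol([[A,r]])} with $A=0$, this reduces to showing $\bar\og\notin\ol{\La(n,r-l)}_{l}$. Indeed, if some $\mu\in\La(n,r-l)$ satisfied $\bar\mu=\bar\og$, then $\mu_i\equiv 1\pmod l$ for $1\leq i\leq r$ would force $\mu_i\geq 1$ there, giving $\sg(\mu)\geq r>r-l$, a contradiction. Hence for any $V\in\Mod(\tu_k(n,r-l))$ inflated via $\rho_{r,r-l}$ we have $F_r(V)=\ttk_\og V=0$, so class (B) lies in $\ker(\ttF_r)$.

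For class (A), \ref{res Lk(la)} and \ref{the classification of irreducible module of u(n,r)} yield $\fL_k(\lb)\cong L_k(\la)|_{\tu_k(n,r)}$, so $F_r(\fL_k(\lb))=\ttk_\og L_k(\la)$ is the $\og$-weight space of the simple $q$-Schur module $L_k(\la)$. By the classical Schur-functor theory (Dipper--James) for $U_k(n,r)$ at an $l$-th root of unity, this weight space is a simple $\sH_r$-module $D^{\la'}$, and it is nonzero precisely when $\la'$ is $l$-regular, equivalently when $\la$ is $l$-restricted, i.e., $\la\in\sfX_1$. Since conjugation $\la\mapsto\la'$ bijects $l$-restricted partitions of $r$ (which, thanks to $n\geq r$, index class (A)) with $l$-regular partitions of $r$ (which index all simple $\sH_r$-modules), $\ttF_r$ sends the class-(A) portion of the $\mbz$-basis of $K(\tu_k(n,r))$ bijectively to the $\mbz$-basis of $K(\sH_r)$.

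Combining the two parts, $\ttF_r$ is surjective and $\ker(\ttF_r)$ is exactly the $\mbz$-span of class (B), which coincides with $K(\tu_k(n,r-l))$. The main technical input, and the only nontrivial step, is the Schur-functor correspondence invoked in the third paragraph: that for $\la\in\sfX_1\cap\La^+(n,r)$ with $n\geq r$ the weight space $\ttk_\og L_k(\la)$ is a nonzero simple Hecke module. Given this, the result drops out from the decomposition \eqref{rem} and the explicit transfer formula \eqref{the element rhol([[A,r]])}.
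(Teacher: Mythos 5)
Your proof is correct, and for the kernel it takes a genuinely different (and more direct) route than the paper's. For surjectivity the two arguments coincide in substance: via \ref{res Lk(la)} one identifies $F_r(\fL_k(\lb))$ with the $\og$-weight space $\ttk_\og L_k(\la)$, and then quotes the Schur-functor fact that for $l$-restricted $\la$ with $\sg(\la)=r$ and $n\geq r$ these are pairwise non-isomorphic simple $\sH_r$-modules exhausting all of them; the paper cites \cite[4.4(2)]{Don} where you cite Dipper--James (your precise identification with $D^{\la'}$ may differ by a sign twist depending on conventions, but only the qualitative statement is used, so this is harmless). The real difference lies in proving $F_r(\fL_k(\mb))=0$ for $\mb\in\ol{\sfX_1(l,r-l)}_{l}$: the paper argues indirectly, combining \eqref{rem} with the general fact \cite[(6.2(g))]{Gr80} that non-isomorphic simples with nonzero image under an idempotent truncation have non-isomorphic images, so that once the class-(A) images exhaust the simple $\sH_r$-modules the class-(B) images are forced to vanish; you instead compute directly that $\rho_{r,r-l}(\ttk_\og)=0$, since $\bar\og\notin\ol{\La(n,r-l)}_{l}$ (any preimage would have $\mu_i\geq 1$ for $1\leq i\leq r$, hence $\sg(\mu)\geq r>r-l$), whence $\ttk_\og$ annihilates every module inflated through $\rho_{r,r-l}$. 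Your computation is self-contained, avoids Green's lemma, and yields the disjointness of the two classes in \eqref{rem} as an independent byproduct rather than as a consequence of the completeness of the class-(A) images; the paper's argument, conversely, needs nothing about the transfer map beyond viewing $K(\tu_k(n,r-l))$ as a subgroup of $K(\tu_k(n,r))$. The remaining bookkeeping (kernel equals the span of class (B), which is $K(\tu_k(n,r-l))$) is the same in both proofs.
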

\begin{proof}
By \ref{res Lk(la)} and \cite[4.4(2)]{Don},
the set $\{F_r(\fL_k(\lb))\mid\la\in \sfX_1,\,\sg(\la)=r\}$
forms a complete set of non-isomorphic simple  $\sH_r$-modules.
Thus by \cite[(6.2(g))]{Gr80} and \eqref{rem}, we conclude that
$F_r(\fL_k(\lb))=0$ for $\lb\in\ol{\sfX_1(l,r-l)}_{l}$. The assertion follows.
\end{proof}

%By the above result, we know if $V$ is an irreducible
%$\tu_k(n,r')$-module, $\ol{r}=\ol{r'}$ and $r'\leq r$, then $V$ is
%an irreducible $\tu_k(n,r)$-module.

\section{Semisimple little $q$-Schur algebras}
We now determine semisimple little $q$-Schur
algebras. This can be easily done by the semisimplicity of the infinitesimal $q$-Schur algebra $s_k(n,r)=s_k(n,r)_1$ and the following.

\begin{Lem}\label{socle}
Let $V$ be an $s_k(n,r)$-module. Then
$\soc_{s_k(n,r)}V=\soc_{\tu_k(n,r)}V$.
\end{Lem}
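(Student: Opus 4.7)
My plan has two parts, corresponding to the two inclusions.

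The inclusion $\soc_{s_k(n,r)}V\subseteq\soc_{\tu_k(n,r)}V$ is immediate from Theorem \ref{the classification of irreducible module of u(n,r)}: the $s_k(n,r)$-socle decomposes as a direct sum of simple $s_k(n,r)$-submodules $\h L_1(\la)$, and each restriction $\h L_1(\la)|_{\tu_k(n,r)}\cong\fL_k(\bar\la)$ is simple. Hence $\soc_{s_k(n,r)}V$ is semisimple as a $\tu_k(n,r)$-module and is therefore contained in $\soc_{\tu_k(n,r)}V$.

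For the reverse inclusion, the idea is to show that every simple $\tu_k(n,r)$-submodule $W$ of $V$ is contained in an $s_k(n,r)$-semisimple submodule. Using the commutation relations between the Cartan elements $\ttk_j,\,[\ttk_j;0;t]$ and the generators $\tte_i,\,\ttf_i$, one has $s_k(n,r)=U_k^0(n,r)\cdot\tu_k(n,r)=\tu_k(n,r)\cdot U_k^0(n,r)$, so the $s_k(n,r)$-submodule $U:=s_k(n,r)W$ of $V$ equals $U_k^0(n,r)W$. Choose a vector $w\in W$ with $\tu_k^+(n,r)w=0$, available by standard highest-weight theory for $\tu_k(n,r)$, and using the $U_k^0(n,r)$-weight decomposition $V=\bigoplus_\mu V_\mu$ write $w=\sum_\mu w_\mu$. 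Since the summands $\tte_i w_\mu$ lie in distinct weight spaces $V_{\mu+\alpha_i}$ and add to $\tte_i w=0$, each $w_\mu$ is itself killed by $\tu_k^+(n,r)$. Combining this with $W=\tu_k^-(n,r)w$ and the commutation $U_k^0(n,r)\tu_k^-(n,r)=\tu_k^-(n,r)U_k^0(n,r)$, one obtains
\[
U=\sum_\mu\tu_k^-(n,r)w_\mu,
\]
in which each summand is a cyclic $s_k(n,r)$-submodule generated by a $\tu_k^+(n,r)$-killed $U_k^0(n,r)$-weight vector of weight $\mu$.

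The main obstacle is to upgrade this to the statement that $U$ is semisimple as an $s_k(n,r)$-module. My plan is to show that each cyclic submodule $\tu_k^-(n,r)w_\mu$ is simple as an $s_k(n,r)$-module, isomorphic to $\h L_1(\mu)$, which then makes $U$ a sum, and hence after pruning a direct sum, of simple $s_k(n,r)$-submodules. The key input is Theorem \ref{the classification of irreducible module of u(n,r)} together with \ref{res Lk(la)}: the restriction to $\tu_k(n,r)$ of any simple $s_k(n,r)$-composition factor of $\tu_k^-(n,r)w_\mu$ is simple, and a careful analysis of the $\tu_k^0(n,r)$-weight multiplicities -- using that the generator $w_\mu$ has $U_k^0$-weight $\mu$ and $\tu_k^0$-weight $\bar\mu$, and that $\tu_k^-(n,r)w_\mu\subseteq U_k^0(n,r)W$ has $\tu_k(n,r)$-composition factors all of the form $\fL_k(\bar\mu)\cong W$ -- is intended to force the $s_k(n,r)$-composition length to be $1$. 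Once this simplicity is in hand, $W\subseteq U\subseteq\soc_{s_k(n,r)}V$, completing the proof.
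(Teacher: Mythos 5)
Your first inclusion is fine: since each $\h L_1(\la)$ restricts to a simple $\tu_k(n,r)$-module, $\soc_{s_k(n,r)}V$ is a semisimple $\tu_k(n,r)$-module and lies in $\soc_{\tu_k(n,r)}V$. This is also the easy half in the paper.

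The reverse inclusion is where there is a genuine gap. Your reductions up to $s_k(n,r)W=U_k^0(n,r)W=\sum_\mu\tu_k^-(n,r)w_\mu$ are correct, but the decisive claim --- that each cyclic module $\tu_k^-(n,r)w_\mu$ is a \emph{simple} $s_k(n,r)$-module --- is essentially as hard as the lemma itself, and your sketch does not establish it. A cyclic $s_k(n,r)$-module generated by a weight vector killed by $\tu_k^+(n,r)$ need not be simple (the baby Verma type modules for $G_1T$ are exactly of this shape), so some further input is indispensable. The input you propose fails on two counts. First, the premise that all $\tu_k(n,r)$-composition factors of $\tu_k^-(n,r)w_\mu\han U_k^0(n,r)W$ are isomorphic to $W$ is itself unproved; the natural way to see it is to know already that $U_k^0(n,r)W$ sits inside $\soc_{s_k(n,r)}V$, i.e.\ the statement being proved. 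Second, even granting that premise, $s_k(n,r)$-simplicity does not follow: non-isomorphic simple $s_k(n,r)$-modules $\h L_1(\nu)$ and $\h L_1(\nu')$ with $\nu\equiv\nu'\ (\mathrm{mod}\ l\mbz^n)$ (for instance $\h L_1(6,0)$ and $\h L_1(3,3)$ when $n=2$, $l=l'=3$, $r=6$) restrict to the same simple $\tu_k(n,r)$-module, so a highest-weight $s_k(n,r)$-module of length greater than one can still have all its $\tu_k(n,r)$-composition factors isomorphic to $W$; the weight-multiplicity analysis as you describe it does not rule this out.

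For comparison, the paper never argues on an individual simple $\tu_k(n,r)$-submodule. It uses the sandwich $u_k(n,r)_1\han\tu_k(n,r)\han s_k(n,r)_1$ to squeeze $\soc_{\tu_k(n,r)}V$ between $\soc_{s_k(n,r)_1}V=\soc_{G_1T}V$ and $\soc_{u_k(n,r)_1}V=\soc_{G_1}V$, and then quotes Donkin's theorem that $\soc_{G_1T}V=\soc_{G_1}V$. The proof of that cited fact works with the whole socle at once: the $G_1$-socle is a canonical submodule, hence stable under the torus (equivalently under all the idempotents $\ttk_\mu$), hence a $G_1T$-submodule, and a $G_1T$-module that is $G_1$-semisimple is $G_1T$-semisimple. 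That canonicity, or gradedness, of the full socle is precisely what your one-submodule-at-a-time argument is missing; if you want a self-contained proof in your spirit, show first that $\soc_{\tu_k(n,r)}V$ is stable under every $\ttk_\mu$ and then prove semisimplicity over $s_k(n,r)$ of that whole stable submodule, rather than trying to show that $s_k(n,r)W$ is simple-isotypic for a single $W$.
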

\begin{proof}
It is easy to check that $\soc_{s_k(n,r)_1}V=\soc_{G_1T}V$ and
$\soc_{u_k(n,r)_1}V=\soc_{G_1}V$. By \cite[3.1(18)(iii)]{Don}
we have $\soc_{G_1T}V=\soc_{G_1}V$. It follows that
$\soc_{s_k(n,r)_1}V=\soc_{u_k(n,r)_1}V$. Since $u_k(n,r)_1\han
\tu_k(n,r)\han s_k(n,r)_1$ the assertion follows from \ref{the
relation between various irreducible module} and \ref{the
classification of irreducible module of u(n,r)}.
\end{proof}

%\begin{Prop}\label{semi}
%The infinitesimal $q$-Schur algebra $s_k(n,r)$ is semisimple if
%and only if the little $q$-Schur algebra $\tu_k(n,r)$ is semisimple.
%\end{Prop}

\begin{Thm}
The little $q$-Schur algebra $\tu_k(n,r)$ is semisimple if and only
if either $l>r$ or $l=n=2$ and $r\geq3$ is odd.
\end{Thm}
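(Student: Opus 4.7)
The plan is to combine Lemma \ref{socle} with the known semisimplicity criterion for the infinitesimal $q$-Schur algebra $s_k(n,r)$ and the block description of $\tu_k(2,r)$ to be developed in Section~8.

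Sufficiency, Case 1 ($l>r$). If $l>r$, then every $\la\in\La^+(n,r)$ has $0\le\la_i\le r<l$, hence $\la\in\sfX_1$ and $\sfX_1(l,r)=\La^+(n,r)$. By Theorem \ref{the relation between various irreducible module} and \eqref{the irreducible module of s(n,r)}, the simple $s_k(n,r)$-modules are indexed by $\La^+(n,r)$ and coincide with the simple $U_k(n,r)$-modules; since the Hecke algebra $\sH_r$ is semisimple when $l>r$, so is $U_k(n,r)=s_k(n,r)$. Now Lemma \ref{socle} says every $s_k(n,r)$-submodule of a given $s_k(n,r)$-module is $\tu_k(n,r)$-stable with the same socle, so the semisimple regular $s_k(n,r)$-module restricts to a semisimple $\tu_k(n,r)$-module. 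Since $\tu_k(n,r)\cdot 1\subseteq s_k(n,r)$ is a $\tu_k(n,r)$-submodule of this semisimple restriction, the regular $\tu_k(n,r)$-module is semisimple, i.e.\ $\tu_k(n,r)$ is a semisimple algebra.

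Sufficiency, Case 2 ($l=n=2$ and $r\ge 3$ odd). Here $l'=4$, and $s_k(2,r)$ fails to be semisimple because $l\le r$. However, the relations $\tte_i^l=\ttf_i^l=0$ collapse $\tu_k(2,r)$ to a much smaller algebra than $s_k(2,r)$, and the oddness of $r$ prevents certain composition factors of Weyl modules from lying in the same $\tu_k(2,r)$-block. Concretely, one invokes the block classification of $\tu_k(2,r)$ carried out in Section~8: when $r$ is odd, each block of $\tu_k(2,r)$ turns out to contain exactly one simple $\fL_k(\lb)$ and is a full matrix algebra, so $\tu_k(2,r)$ is semisimple.

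Necessity. Suppose $\tu_k(n,r)$ is semisimple with $l\le r$ and we are not in the case $l=n=2$, $r$ odd; we must produce a non-split extension of simple $\tu_k(n,r)$-modules. Pick $\la\in\La^+(n,r)$ with $\la_1-\la_2\ge l$ (possible since $l\le r$ and either $n\ge 3$, $l\ge 3$, or $r$ is even), and consider the baby Weyl module $V_k'(\la)=\tu_k(n)x_\la$ inflated from $\tu_k(n,r)$ via $\zr$. One shows, using Theorem \ref{Lk=Lk'} and the classification in Theorem \ref{the classification of irreducible module of u(n,r)}, that $V_k'(\la)$ has a proper submodule whose quotient is the head $\fL_k(\lb)$, yet the short exact sequence does not split because the relevant weight space structure and the surviving action of some $\ttf_i^{(s)}$ with $s<l$ force a non-trivial Ext$^1$ inside $\tu_k(n,r)$-mod. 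This contradicts semisimplicity; for $l=n=2$ and $r$ even, the same construction with $\la=(r,0)$ yields a non-split self-extension, completing the converse.

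The main obstacle is the exceptional family $l=n=2$, $r$ odd: verifying semisimplicity there cannot be deduced formally from Lemma \ref{socle} (since $s_k(2,r)$ is non-semisimple) and requires the detailed block-by-block decomposition of $\tu_k(2,r)$ postponed to Section~8. The necessity argument for the remaining non-exceptional small cases (in particular $l=n=2$, $r$ even) also needs a careful exhibition of a non-split extension rather than an appeal to $s_k(n,r)$.
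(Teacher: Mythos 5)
Your sufficiency mechanism for $l>r$ is sound (and close in spirit to the paper's), but the overall architecture breaks because you have the semisimplicity criterion for the infinitesimal algebra wrong. The result of Fu quoted at the start of the paper's proof ([QF2, 1.2]) says that $s_k(n,r)=s_k(n,r)_1$ is semisimple if and only if $l>r$ \emph{or} $n=2$, $l=2$ and $r\geq 3$ is odd. Your Case 2 starts from the opposite premise ("$s_k(2,r)$ fails to be semisimple because $l\le r$"), which is false precisely in the exceptional family; that is the whole reason this family appears in the statement. Because of this, your fallback -- the block classification of $\tu_k(2,r)$ "to be developed in Section 8" -- is both unavailable and unnecessary: Section 8 assumes $l'=l\geq 3$ odd, so it never treats $l=2$ (where $l'=4$), and the claim that each block of $\tu_k(2,r)$ is a full matrix algebra when $r$ is odd is exactly what would have to be proved. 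Once you use the correct criterion, your Case 1 argument (a semisimple $s_k(n,r)$-module equals its $s_k$-socle, hence by Lemma \ref{socle} equals its $\tu_k$-socle, and $\tu_k(n,r)\subseteq s_k(n,r)$ is a $\tu_k$-submodule of the restricted regular module) covers the exceptional case as well; the paper proves the same implication slightly differently, tensoring an indecomposable projective $W$ up to $s_k(n,r)\otimes_{\tu_k(n,r)}W$ and using flatness of $W$ together with Theorem \ref{the classification of irreducible module of u(n,r)} to embed $W$ into a semisimple $\tu_k(n,r)$-module.

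The necessity half is a genuine gap as written. You propose taking $\la$ with $\la_1-\la_2\geq l$ and extracting a non-split extension from the baby Weyl module, but the decisive step ("the weight space structure and the surviving action of some $\ttf_i^{(s)}$ force a non-trivial $\Ext^1$") is asserted rather than proved, and Theorem \ref{Lk=Lk'} only controls $V_k'(\bfd)$ for restricted $\bfd$, so it gives you no handle on $V_k'(\la)$ for such $\la$. The paper's necessity is a short reduction you should adopt: if $s_k(n,r)$ is not semisimple (which, by the correct criterion, is exactly the complement of the two cases in the statement), choose $\la,\mu\in\sfX_1(l,r)$ with $\Ext^1_{s_k(n,r)}(\h L_1(\la),\h L_1(\mu))\neq 0$ and let $V$ be a corresponding non-split extension; Lemma \ref{socle} gives $\soc_{\tu_k(n,r)}V=\soc_{s_k(n,r)}V=\h L_1(\mu)\subsetneq V$, so $V$ is a non-semisimple $\tu_k(n,r)$-module and $\tu_k(n,r)$ is not semisimple. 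In short: both directions should be routed through the equivalence "$\tu_k(n,r)$ semisimple $\Leftrightarrow$ $s_k(n,r)$ semisimple", after which the theorem is immediate from [QF2, 1.2]; your proposal misses this and leaves both the exceptional sufficiency case and the entire necessity direction unproved.
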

\begin{proof}
By \cite[1.2]{QF2}, the infinitesimal $q$-Schur algebra $s_k(n,r)$ is semisimple if and only
if either $l>r$ or $n=2$, $l=2$ and $r\geq3$ is odd. Thus,
it is enough to prove that
the infinitesimal $q$-Schur algebra $s_k(n,r)$ is semisimple if
and only if the little $q$-Schur algebra $\tu_k(n,r)$ is semisimple.

Suppose the algebra $s_k(n,r)$ is semisimple. Let $W$ be an
indecomposable projective $\tu_k(n,r)$-module. Since $s_k(n,r)$ is
semisimple, the $s_k(n,r)$-module $s_k(n,r)\ot_{\tu_k(n,r)}W$ is
semisimple. By \ref{the classification of irreducible module of
u(n,r)}, $(s_k(n,r)\ot_{\tu_k(n,r)}W)|_{\tu_k(n,r)}$ is a
semisimple $\tu_k(n,r)$-module. Since $W$ is a projective
$\tu_k(n,r)$-module, $W$ is a flat $\tu_k(n,r)$-module. It follows
that the natural $\tu_k(n,r)$-module homomorphism from $W\cong
\tu_k(n,r)\ot_{\tu_k(n,r)}W$ to $s_k(n,r)\ot_{\tu_k(n,r)}W$ is
injective and hence $W$ is a semisimple $\tu_k(n,r)$-module. So
the algebra $\tu_k(n,r)$ is semisimple.

Now we suppose the algebra $s_k(n,r)$ is not semisimple. Then
there exist $\la,\mu\in \sfX_1(l,r)$ such that $\Ext_{s_k(n,r)}(\h
L_1(\la),\h L_1(\mu))\not=0$ and hence, there exists an
$s_k(n,r)$-module $V$ such that $\soc_{s_k(n,r)}V=\h L_1(\mu)$
with top $\h L_1(\la)$. By \ref{socle} we have
$\soc_{\tu_k(n,r)}V=\h L_1(\mu)$ and hence $\tu_k(n,r)$ is not
semisimple.
\end{proof}

 We will see in the next section (at least when $l'$ is odd) that
 the semisimplicity of $\tu_k(n,r)$ depends only on $r$ and $l$, while the infinitesimal quantum group $\tu_k(n)$ is never semisimple (for all $n$ and $l'=l$).

\section{Little $q$-Schur algebras of finite representation type}

In this section, we will assume $k$ is an algebraically closed field and $l'$ is odd. Thus, $l'=l\geq3$ and $\ol{\sfX_1(l,r)}_l=\ol{\La^+(n,r)}_l$ (see \ref{l vz l'}(2)). By \ref{the relation between various irreducible module} and \ref{the classification of irreducible module of u(n,r)}, $L_1(\la)=\fL_k(\lb)$ for all $\la\in\sfX_1(l,r)$.
We will classify little $q$-Schur algebras of finite representation type in this case.  The even case is much more complicated and will be treated elsewhere.

We first determine the blocks of $\tu_k(2,r)$. Using it, we then establish that $\tu_k(2,r)$ has finite representation type if and only if it is semisimple. We then generalize this from $n=2$ to an arbitrary $n$.

Blocks of $q$-Schur
algebras were classified in \cite{Cox,Cox98} (cf. also \cite{D94}). Moreover, blocks of
infinitesimal $q$-Schur algebras were classified in \cite{Cox,Cox00}
for $n=2$. Now we first classify blocks of little $q$-Schur algebras $\tu_k(2,r)$ and use this
to determine their finite representation type.

%We will denote the block of $G_h$
%containing $L_1(\la)$ by $\sB_h(\la)$ for $\la\in X$ and denote the
%block of infinitesimal $q$-Schur algebras $s_k(n,r)_h$ containing
%$\hl_1(\la)$ by $\sB_h^r(\la)$ for $\la\in\Ga_h^r(D)$. For
%$\la\in\La^+(n,r)$ the block of $q$-Schur algebras $U_k(n,r)$
%containing $L_k(\la)$ will be denoted by $\sB^r(\la)$.

Let $\Phi=\{\be_i-\be_j\mid 1\leq i\neq j\leq n\}$ be the set of roots of
type $A_{n-1}$ where
$$\be_i=(0,\cdots,0,\underset i1,0\cdots,0)\in\mathbb Z^n.$$
Let $\Phi^+=\{\be_i-\be_j\mid 1\leq i<j\leq n\}$ be the set of
positive roots. There is a $\mbz$-bilinear form
$\langle-,-\rangle$ on $X=\mbz^n$ satisfying
$\langle\be_i,\be_j\rangle=\dt_{ij}$ for $1\leq i,j\leq n$. The
symmetric group $\frak S_n$ acts on $X$ by place permutation.
The `dot' action of $\frak S_n$ on $X$ is defined as:
$w_\cdot\la=w(\la+\rho)-\rho$ where $\rho=(n-1,n-2,\cdots,1,0)$. For
$\la\in\La^+(n,r)$, let $m(\la)$ be the least positive integer $m$ such
that there exists an $\al\in\Phi^+$ with
$\langle\la+\rho,\al\rangle\not\in lp^{m}\mbz$.

\begin{Prop}$($\cite{Cox,Cox98}$)$\label{block1}
For $\la\in\La^+(n,r)$, let $\sB^{n,r}(\la)$ be the block of $q$-Schur algebras $U_k(n,r)$
containing $L_k(\la)$. Then, we have
$$\sB^{n,r}(\la)=({\frak
S}_{n}\!\,_\cdot\la+lp^{m(\la)}\mbz\Phi)\cap\La^+(n,r).$$
\end{Prop}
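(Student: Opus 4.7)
The plan is to verify the block description in two stages: first show that any two weights in the set $({\frak S}_{n}\!\,_\cdot\la+lp^{m(\la)}\mbz\Phi)\cap\La^+(n,r)$ index simple modules in the same block of $U_k(n,r)$, then show the set is maximal with this property.

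For the containment direction, I would invoke the quantum linkage principle. Two simples $L_k(\la)$ and $L_k(\mu)$ belong to the same block precisely when they can be connected by a chain of nonzero Ext groups, or equivalently when they appear as composition factors of a common Weyl module after a sequence of translation functors. The quantum analogue of the Jantzen sum formula, adapted to $q$-Schur algebras at an $l$th root of unity over a field of characteristic $p$, constrains the composition factors of $V_k(\la)$ to lie within a single orbit of the affine reflection group $W_{lp^m}:={\frak S}_n\ltimes lp^m\mbz\Phi$ for the appropriate level $m=m(\la)$. Iterating translation functors on Weyl modules then propagates the linkage across the whole orbit, intersected with $\La^+(n,r)$.

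For maximality, I would exploit the defining minimality of $m(\la)$: by hypothesis there is some $\al\in\Phi^+$ with $\langle\la+\rho,\al\rangle\notin lp^{m(\la)}\mbz$, so $\la$ lies off a $W_{lp^{m(\la)}}$-wall associated to $\al$ while possibly sitting on walls of the coarser group $W_{lp^{m(\la)-1}}$. Standard wall-crossing and translation principle arguments then forbid any weight outside ${\frak S}_n\!\,_\cdot\la+lp^{m(\la)}\mbz\Phi$ from sharing a block with $\la$, because translation across a genuine $W_{lp^{m(\la)}}$-wall separates blocks.

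The main obstacle I anticipate is the interplay between the root-of-unity parameter $l$ and the characteristic $p$: the linkage principle yields a nested sequence of affine reflection groups $W_l\han W_{lp}\han W_{lp^2}\han\cdots$, and one must pin down precisely which of these governs a given $\la$. The integer $m(\la)$ is engineered to capture exactly this level, but justifying that translation functors detect the $p$-adic divisibility of $\langle\la+\rho,\al\rangle$ at the correct depth requires a delicate combinatorial analysis of alcove geometry and Jantzen filtrations. Since this result is established in detail in \cite{Cox,Cox98}, my plan is ultimately to defer those combinatorial and homological arguments to the cited papers and to use the proposition as a black box in the subsequent block analysis of $\tu_k(n,r)$ in \S8.
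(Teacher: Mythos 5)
The paper offers no proof of this proposition at all: it is quoted verbatim from \cite{Cox,Cox98} and used as a black box in the block analysis of \S8, which is exactly what you end up doing. Your preliminary sketch of the linkage/translation-functor strategy is a reasonable heuristic for how Cox's proof goes, but since you ultimately defer the substance to the cited papers, your treatment coincides with the paper's.
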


We will denote the block of $G_h$
containing $L_h(\la)$ by $\sB_h^n(\la)$ for $\la\in \sfX_h$ and denote the
block of infinitesimal $q$-Schur algebras $s_k(n,r)_h$ containing
$\hl_1(\la)$ by $\sB_h^{n,r}(\la)$ for $\la\in\sfX_h(l,r)$.

\begin{Prop}$($\cite{Cox,Cox00}$)$\label{block2}
Assume $n=2$. For $\la\in \sfX_h$, we have
$$\sB_h^2(\la)=({\frak
S}_{2}\!\,_\cdot\la+lp^{m(\la)}\mbz\Phi+lp^{h-1}\sfX)\cap\sfX_h.$$ For
$\la\in\sfX_h(l,r)$ we have
\begin{equation*}
\sB_h^{2,r}(\la)=
\begin{cases}
({\frak
S}_{2}\!\,_\cdot\la+lp^{m(\la)}\mbz\Phi)\cap\sfX_h(l,r)&\text{ if
$m(\la)+1\leq h$},\\
\{\la\}&\text{ if $m(\la)+1>h$}.
\end{cases}
\end{equation*}
\end{Prop}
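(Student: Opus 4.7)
The plan is to derive both formulas from Proposition \ref{block1} via two standard reductions: first from $G$-blocks to $G_hT$-blocks, then from $G_hT$-blocks to those of $s_k(2,r)_h$ by restricting to the degree-$r$ homogeneous component. For $n=2$ the root system is just $\Phi=\{\pm\al\}$ with $\al=\be_1-\be_2$, so the relevant lattices are one-dimensional and the case analysis is transparent.

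For the first formula, I would argue that simple $G_hT$-modules $\hlr(\la)$ and $\hlr(\mu)$ lie in the same block if and only if $\la$ and $\mu$ are $G$-linked modulo $lp^{h-1}\sfX$: the quantum Frobenius morphism factors $G$-linkage through $G_hT$ with kernel detecting precisely the weight shifts by $lp^{h-1}\sfX$. Combining this with the $n=2$ specialization of \ref{block1}, which identifies the $G$-block of $\la$ with $\fS_2\cdot\la+lp^{m(\la)}\mbz\Phi$, immediately yields
\[
\sB_h^2(\la)=(\fS_2\cdot\la+lp^{m(\la)}\mbz\Phi+lp^{h-1}\sfX)\cap\sfX_h.
\]

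For the second formula, restricting to $s_k(2,r)_h$ (the degree-$r$ part of $A_q(2)/J_h$) forces all linking translations to preserve the total weight $r$. In rank one, the zero-sum part of $lp^{h-1}\sfX$ is exactly $lp^{h-1}\mbz\Phi$, so the block becomes $(\fS_2\cdot\la+lp^{\min(m(\la),h-1)}\mbz\Phi)\cap\sfX_h(l,r)$. When $m(\la)+1\leq h$, i.e., $m(\la)\leq h-1$, we have $lp^{h-1}\mbz\Phi\han lp^{m(\la)}\mbz\Phi$, giving the first branch. When $m(\la)+1>h$, the definition of $m(\la)$ together with $\la\in\sfX_h$ pins down $\la_1-\la_2+1=lp^{h-1}$ exactly; a direct check then shows that $s_\al\cdot\la$ and every nonzero translate of $\la$ by $lp^{h-1}\mbz\Phi$ exit $\sfX_h(l,r)$, collapsing the block to $\{\la\}$.

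The main obstacle is Step 1: making the equivalence between block equivalence in $\text{Mod}(G_hT)$ and the combinatorial description by $G$-linkage shifted by $lp^{h-1}\sfX$ precise. This can be done either via the quantum linkage principle for Frobenius kernels as in \cite{Don}, or, following \cite{Cox, Cox00}, by direct computation of $\Ext^1$ between baby Weyl modules for $G_hT$. Once this is established, Step 2 and the singleton-collapse case are entirely combinatorial in the one-dimensional root lattice.
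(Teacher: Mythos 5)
The paper does not prove this proposition at all: it is quoted from Cox's thesis \cite{Cox} and from \cite{Cox00}, so your sketch has to stand on its own, and as written it has genuine gaps. Your Step 1 both misassigns the group and assumes the hard content. The first displayed formula describes blocks of the Frobenius kernel $G_h$, not of $G_hT$, and the equivalence you assert for $G_hT$ (same block if and only if $G$-linked modulo $lp^{h-1}\sfX$) is false: since $E_i,F_i$ shift weights by roots, all weights of an indecomposable $G_hT$-module lie in a single coset of $\mbz\Phi$, so simple $G_hT$-modules whose weights differ by a general element of $lp^{h-1}\sfX$ are never linked. The translation by $lp^{h-1}\sfX$ enters only after restricting to $G_h$, through the $T$-weight decomposition of $\Ext$ groups, i.e.\ $\Ext^1_{G_h}(L_h(\la),L_h(\mu))\neq0$ exactly when $\Ext^1_{G_hT}(\hlr(\la),\hlr(\mu+lp^{h-1}\nu))\neq0$ for some $\nu\in\sfX$. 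Moreover \ref{block1} is a statement about blocks of $U_k(n,r)$, i.e.\ about (polynomial) $G$-modules, and the block partition of $G_h$ or $G_hT$ does not follow formally from it: restriction to a subgroup scheme creates new extensions, so one inclusion requires a linkage principle at the level of $G_hT$, and the reverse inclusion requires exhibiting non-split extensions (Cox does this by computing $\Ext^1$ between baby Verma/Weyl modules). This is precisely what you call ``the main obstacle'' and defer to \cite{Don,Cox,Cox00}; since the proposition \emph{is} Cox's theorem, that deferral is circular and nothing is actually established.

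Your Step 2 is also not a valid reduction. Blocks of $s_k(2,r)_h$ are blocks of the polynomial degree-$r$ truncation of $\Mod(G_hT)$, not of $G_h$ with a degree constraint, so ``the zero-sum part of $lp^{h-1}\sfX$'' is not the mechanism; and your singleton-case analysis fails exactly in the case this paper uses, namely $h=1$. There $m(\la)+1>h$ holds for every $\la$ (as $m(\la)\geq1$), the list of congruences you invoke is empty, so nothing ``pins down $\la_1-\la_2+1=lp^{h-1}$'' (take $\la=(l-1,1)$), and translates by $lp^{h-1}\mbz\Phi$ need not leave $\sfX_h(l,r)$: for $r=l$ the weights $(l,0)$, $(0,l)$ and $(l-1,1)$ all lie in $\sfX_1(l,l)$. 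Indeed, taken literally your reading would force every $s_k(2,r)_1$-block to be a singleton, which is incompatible with \ref{block} and with the two-element blocks computed in \ref{irt1}; the indexing of $m(\la)$ and of the inequality here is shifted relative to \cite{Cox98,Cox00}, and a genuine proof must first pin down those conventions and then carry out the $G_hT$-linkage and $\Ext^1$ computations, rather than confirm the formula by the direct check you describe.
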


For
$\bar\la\in\ol{\La^+(n,r)}_l$, the block of little $q$-Schur algebras $\tu_k(n,r)$
containing $\fL_k(\lb)$ will be denoted by ${\mathfrak b}^{n,r}(\lb)$. We now determine ${\mathfrak b}^{2,r}(\lb)$.

\begin{Lem}\label{symmetric}
For any $\lb,\mb\in\ol{ \sfX_1(l,r)}_l$, we have
$$\Ext_{\tu_k(n,r)}^1(\fL_k(\lb),\fL_k(\mb))
\cong\Ext_{\tu_k(n,r)}^1(\fL_k(\mb),\fL_k(\lb)).$$
\end{Lem}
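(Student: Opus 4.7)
The strategy is to exhibit a contravariant self-duality $D$ on $\mathrm{Mod}(\tu_k(n,r))$ that fixes every simple module up to isomorphism. Once such a $D$ is in hand, applying it to a short exact sequence
\[0\to\fL_k(\mb)\to V\to\fL_k(\lb)\to 0\]
representing an $\mathrm{Ext}^1$-class yields
\[0\to D\fL_k(\lb)\to DV\to D\fL_k(\mb)\to 0,\]
and the claimed isomorphism of $\mathrm{Ext}^1$-groups is immediate.

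First I would construct an anti-automorphism $\tau$ of $\tu_k(n,r)$ with $\tau(\tte_i)=\ttf_i$, $\tau(\ttf_i)=\tte_i$, and $\tau(\ttk_j)=\ttk_j$. A direct inspection of (QG1)--(QG6) shows that $E_i\leftrightarrow F_i$, $K_j\mapsto K_j$ extends to an anti-automorphism of $\bfU(n)$; it sends $E_i^{(N)}\leftrightarrow F_i^{(N)}$ and fixes each $\bigl[{K_i;0\atop t}\bigr]$, so preserves $U_\sZ(n)$ and descends to $\tu_k(n)$. Under $\zeta_r$ this corresponds on $\bfU(n,r)$ to the matrix-transpose involution $[A]\mapsto[A^T]$, because $\zeta_r(E_i)=E_{i,i+1}(\mathbf 0,r)$ and $\zeta_r(F_i)=E_{i+1,i}(\mathbf 0,r)$ are transposes; since $\Xi(n,r)_1^\pm$ is closed under transposition, $\tau$ restricts to $\tu_k(n,r)$. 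Setting $D(V)=\Hom_k(V,k)$ with action $(a\cdot f)(v)=f(\tau(a)v)$ then defines an exact contravariant functor with $D^2\cong\mathrm{id}$, hence an involutive permutation of the isomorphism classes of simples.

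To see that $D$ fixes each $\fL_k(\lb)$, I would use the sandwich $\tu_k(n,r)\subseteq s_k(n,r)\cong s_k(n,r)_1$ of \eqref{sandwich} and Corollary \ref{the relation between the little q-Schur algebra with the infinitesimal q-Schur algebra}. The same transpose anti-involution is defined on all of $s_k(n,r)$ and restricts to $\tau$, so the associated duality $D_s$ on $\mathrm{Mod}(s_k(n,r))$ satisfies $D_s(M)|_{\tu_k(n,r)}=D(M|_{\tu_k(n,r)})$ for every $s_k(n,r)$-module $M$. By the standard contravariant-form theory for simple $G_1T$-modules, $D_s\h L_1(\la)\cong \h L_1(\la)$ for every $\la\in\sfX_1(l,r)$. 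Combining this with the isomorphism $\h L_1(\la)|_{\tu_k(n,r)}\cong\fL_k(\lb)$ supplied by Theorem \ref{the classification of irreducible module of u(n,r)} gives $D\fL_k(\lb)\cong\fL_k(\lb)$.

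The delicate step is the self-duality $D_s\h L_1(\la)\cong\h L_1(\la)$; if one prefers a direct argument inside $\tu_k(n,r)$ alone, the functional $f_0\in V_\lb^*$ with $f_0(v_0)=1$ and vanishing on every other weight space of $\fL_k(\lb)$ has weight $\lb$ in $D\fL_k(\lb)$, and $(\tte_i\cdot f_0)(v)=f_0(\ttf_i v)$ vanishes because any nonzero contribution would require $v\in V_{\lb+\bar\alpha_i}$, and on such vectors (which in the restricted module are reached only through ``top'' powers of $\ttf_i$) the relation $\ttf_i^l=0$ forces $\ttf_i v$ to have zero $v_0$-coefficient in $V_\lb$. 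This exhibits $f_0$ as a highest weight vector of weight $\lb$ in $D\fL_k(\lb)$, and Theorem \ref{the classification of irreducible module of u(n,r)} then forces $D\fL_k(\lb)\cong\fL_k(\lb)$.
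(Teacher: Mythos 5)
Your proposal is correct and follows essentially the same route as the paper: the transpose anti-automorphism $[A]\mapsto[{}^t\!A]$ of $U_k(n,r)$ restricts to an anti-automorphism $\tau$ of $\tu_k(n,r)$, the associated contravariant duality is exact and fixes each simple $\fL_k(\lb)$, and applying it to extensions interchanges the two arguments of $\Ext^1$. One caveat on your optional ``direct'' argument at the end: since $\tu_k(n,r)$ only detects weights modulo $l'$, the $\lb$-weight space of $\fL_k(\lb)$ need not be one-dimensional and the claim about $f_0$ and ``top'' powers of $\ttf_i$ is not rigorous as stated, but your main route through the sandwich $\tu_k(n,r)\han s_k(n,r)\cong s_k(n,r)_1$ and the self-duality of the simple $G_1T$-modules is sound and is exactly what the paper's (terser) proof relies on.
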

\begin{proof}
By \cite[3.10]{BLM}, there is an anti-automorphism $\tau$ on the
$q$-Schur algebra $U_k(n,r)$ by sending $[A]$ to $[^t\!A]$ for all
$A\in\Xi(n,r)$, where $^t\!A$ is the transpose of $A$. Since
the set $\{[\![A,r]\!]_1\mid A\in\ol{\Xi(n,r)}_l\}$ (see \eqref{[[A,r]]}) forms a $k$-basis of
$\tu_k(n,r)$ by \cite{DFW}, we conclude that  $\tau(\tu_k(n,r))=\tu_k(n,r)$.
Using $\tau$, we may construct, for any (finite dimensional) $\tu_k(n,r)$-module $M$, its
contravariant dual module $^\tau\! M$. Thus, as a vector space, $^\tau M$ is the dual space $M^*$ of $M$ and the action is defined by $x.f=f\tau(x)$ for all $x\in\tu_k(n,r),f\in M^*$.
Since $^\tau\!(\fL_k(\lb))\cong
\fL_k(\lb)$ for any $\lb\in\ol{ \sfX_1(l,r)}_l$ and $0\to L\to M\to N\to0$ is an exact sequence
of $\tu_k(n,r)$-modules if and only if so is $0\to {}^\tau\!N\to{}^\tau\! M\to{}^\tau\! L\to0$,
the result follows easily (see \cite[II, 2.12(4)]{Jan87} for a similar result.)
\end{proof}

\begin{Prop}\label{block}
For $\lb\in\ol{\La^+(2,r)}_l(=\ol{ \sfX_1(l,r)}_l)$ with $\la\in\La^+(2,r)$, if ${\mathfrak b}^{2,r}(\lb)$ denotes the block containing $\fL_k(\lb)$ for the little $q$-Schur algebra $\tu_k(2,r)$, then
 $${\mathfrak b}^{2,r}(\lb)=\ol{({\frak
S}_{2}\!\,_\cdot\la)}_l\cap\ol{\La^+(2,r)}_l=\ol{\sB^{2,r}(\la)}_l
=\ol{\sB_1^{2,r}(\la)}_l.$$
\end{Prop}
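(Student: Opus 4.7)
The plan is to handle the combinatorial identities first and focus on the representation-theoretic equality.

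The two combinatorial identities $\ol{({\frak S}_{2}\!\,_\cdot\la)}_l\cap\ol{\La^+(2,r)}_l=\ol{\sB^{2,r}(\la)}_l=\ol{\sB_1^{2,r}(\la)}_l$ follow by inspection of Propositions \ref{block1} and \ref{block2}. Each describes the block as a subset of ${\frak S}_{2}\!\,_\cdot\la$ translated by a correction lattice (namely $lp^{m(\la)}\mbz\Phi$, and in the $G_1T$-case an additional $l\sfX$ summand), all of which lie inside $l\mbz^n$ since $m(\la)\geq 1$ always. Reducing modulo $l$ annihilates the correction and leaves the dot-orbit ${\frak S}_{2}\!\,_\cdot\la$ intersected with $\ol{\La^+(2,r)}_l$.

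For ${\mathfrak b}^{2,r}(\lb)\supseteq\ol{\sB_1^{2,r}(\la)}_l$, I would restrict $s_k(2,r)$-linkage chains. Given $\mu\in\sB_1^{2,r}(\la)$, there is a chain $\la=\nu_0,\dots,\nu_t=\mu$ with $\Ext^1_{s_k(2,r)}(\hl_1(\nu_i),\hl_1(\nu_{i+1}))\neq0$ at each step. A witnessing non-split extension $0\to\hl_1(\nu_i)\to V\to\hl_1(\nu_{i+1})\to 0$ of simples has simple $s_k(2,r)$-socle $\hl_1(\nu_i)$; by Lemma \ref{socle}, $\soc_{\tu_k(2,r)}V\cong\fL_k(\ol{\nu_i})$ is still simple. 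Hence the restriction $0\to\fL_k(\ol{\nu_i})\to V|_{\tu_k(2,r)}\to\fL_k(\ol{\nu_{i+1}})\to 0$ (with terms identified via Theorem \ref{the classification of irreducible module of u(n,r)}) cannot split, since splitting would make the $\tu_k(2,r)$-socle semisimple of length two. Thus $\ol{\nu_i}$ and $\ol{\nu_{i+1}}$ lie in the same $\tu_k(2,r)$-block, and concatenation places $\lb$ and $\mb$ in a common block.

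For the reverse ${\mathfrak b}^{2,r}(\lb)\subseteq\ol{\sB_1^{2,r}(\la)}_l$ I would argue contrapositively. Suppose $\nb\in\ol{\La^+(2,r)}_l\setminus\ol{\sB_1^{2,r}(\la)}_l$, so that no lift $\nu\in\sfX_1(l,r)$ of $\nb$ is $s_k(2,r)$-linked to $\la$; by Lemma \ref{symmetric} it suffices to rule out $\Ext^1_{\tu_k(2,r)}(\fL_k(\nb),\fL_k(\lb))\neq 0$. Given any non-split $\tu_k(2,r)$-extension $W$ of $\fL_k(\nb)$ by $\fL_k(\lb)$, embed $\fL_k(\lb)$ into the $s_k(2,r)$-injective envelope $I=I_{s_k(2,r)}(\hl_1(\la))$, whose $s_k(2,r)$-composition factors are all supported in $\sB_1^{2,r}(\la)$; by Lemma \ref{socle} the module $I$ is also an essential extension of $\fL_k(\lb)$ in $\tu_k(2,r)$-mod, so $W$ embeds into $I$, and the appearance of $\fL_k(\nb)$ as a $\tu_k(2,r)$-composition factor of $I$ forces, via Theorem \ref{the classification of irreducible module of u(n,r)}, some lift of $\nb$ to appear as an $s_k(2,r)$-composition factor of $I$, contradicting $\nb\notin\ol{\sB_1^{2,r}(\la)}_l$.

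The main obstacle is the reverse inclusion: restriction converts $s_k(2,r)$-Ext cleanly into $\tu_k(2,r)$-Ext, but the converse demands controlling either an induced or a coinduced $s_k(2,r)$-module built from a $\tu_k(2,r)$-extension. The decisive tools are Lemma \ref{socle} (which guarantees the $s_k(2,r)$-injective envelope remains essential as a $\tu_k(2,r)$-module, so $\tu_k(2,r)$-extensions embed into $s_k(2,r)$-injective envelopes) and Lemma \ref{symmetric} (which lets one choose the more convenient direction of Ext); together they bridge the asymmetry between the two categories and bring the $s_k(2,r)$-block constraint to bear on $\tu_k(2,r)$-composition factors.
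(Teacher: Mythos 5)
Your first inclusion is sound and is essentially the paper's own argument: a non-split $s_k(2,r)$-extension of two simples has simple $s_k(2,r)$-socle, Lemma \ref{socle} shows the socle stays simple over $\tu_k(2,r)$, so the restricted sequence cannot split, and chaining such steps gives $\ol{\sB_1^{2,r}(\la)}_l\han{\mathfrak b}^{2,r}(\lb)$. The use of Lemma \ref{symmetric} to choose the direction of the extension is harmless.

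The reverse inclusion is where your argument has a genuine gap. From $\soc_{\tu_k(2,r)}I=\soc_{s_k(2,r)}I$ you do get that $I=I_{s_k(2,r)}(\hl_1(\la))$ is an essential extension of $\fL_k(\lb)$ over $\tu_k(2,r)$, but essentiality does not allow you to conclude ``so $W$ embeds into $I$'': extending the inclusion $\fL_k(\lb)\hookrightarrow I$ to the non-split extension $W$ is exactly an injectivity property of $I$ as a $\tu_k(2,r)$-module, and an $s_k(2,r)$-injective need not remain injective on restriction to the subalgebra $\tu_k(2,r)$; nothing you cite (and in particular not Lemma \ref{socle}) supplies this, and it is precisely the hard point. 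A secondary structural issue is that you only rule out a one-step extension between $\fL_k(\nb)$ and $\fL_k(\lb)$, whereas block membership requires closure under chains through intermediate weights, so you would also need your target set to be stable under one-step linkage from each of its members. The paper's route is different and avoids both problems: since $l'=l$ is odd, $\tu_k(2,r)\cong u_k(2,r)_1$, so a non-split $\tu_k(2,r)$-extension $N$ with $\soc_{\tu_k(2,r)}N=\fL_k(\mb)$ and top $\fL_k(\lb)$ is literally a $G_1$-module with $\soc_{G_1}N=L_1(\mu)$ and top $L_1(\la)$; hence $\Ext^1_{G_1}(L_1(\la),L_1(\mu))\neq0$, and the \emph{first} formula in \ref{block2} (the $G_1$-block $\sB_1^2(\la)$, which carries the extra $l\sfX$ summand) gives $\mu\in({\frak S}_{2}\!\,_\cdot\la+lp^{m(\la)}\mbz\Phi+l\sfX)\cap\sfX_1$; reducing mod $l$ and iterating yields ${\mathfrak b}^{2,r}(\lb)\han\ol{({\frak S}_{2}\!\,_\cdot\la)}_l\cap\ol{\La^+(2,r)}_l$. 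Note finally that the equalities with $\ol{\sB^{2,r}(\la)}_l$ and $\ol{\sB_1^{2,r}(\la)}_l$ are obtained in the paper only afterwards, from the computed ${\mathfrak b}^{2,r}(\lb)$ together with \ref{block1} and \ref{block2}; your ``by inspection'' reduction mod $l$ only gives containment of the reduced blocks in the reduced dot-orbit, not the asserted equality.
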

\begin{proof}
%We denote the block of $\tu_k(2,r)$ containing $\lb$ by
%${\mathfrak b}^{2,r}(\lb)$. Since $\La^+(2,r)\han \sfX_1(l,r)$ and
%$\la\in\La^+(2,r)$ we have $\la\in \sfX_1(l,r)$.
If
$\mu\in \sfX_1(l,r)$ and $\Ext^1_{s_k(2,r)_1}(\h
L_1(\la),\h L_1(\mu))\not=0$, then, by \ref{socle},
$\Ext^1_{\tu_k(2,r)}(\fL_k(\lb),\fL_k(\mb))\not=0$. This proves
$\ol{\sB_1^{2,r}(\la)}_l\han{\mathfrak b}^{2,r}(\lb)$. Hence, \ref{block2} implies
$\ol{({\frak S}_{2}\!\,_\cdot\la)}_l\cap\ol{\La^+(2,r)}_l\han{\mathfrak b}^{2,r}(\lb)$.

On the other hand, if $\mb\in{\mathfrak b}^{2,r}(\lb)$ with $\mu\in \sfX_1(l,r)$ and $\Ext_{\tu_k(2,r)}^1(\fL_k(\lb),\fL_k(\mb))\not=0$, then there
exists a $\tu_k(2,r)$-module $N$ (and hence a $G_1$-module) such that
$\soc_{\tu_k(2,r)}N=\fL_k(\mb)$ and $\text{top}_{\tu_k(2,r)}N=\fL_k(\lb)$. Equivalently, as a $G_1$-module,
$\soc_{G_1}N=\soc_{u_k(2,r)_1}N=L_1(\mu)$ and hence
$\text{top}_{G_1}N=L_1(\la)$. So
$\Ext_{G_1}^1(L_1(\la),L_1(\mu))\not=0$. Thus, the first assertion in \ref{block2} implies
$\mu\in ({\frak
S}_{2}\!\,_\cdot\la+lp^{m(\la)}\mbz\Phi+l\sfX)\cap\sfX_1$. Since $\ol{\sfX_1(l,r)}_l=\ol{\La^+(2,r)}_l$, it follows from \ref{the
classification of irreducible module of u(n,r)} that ${\mathfrak b}^{2,r}(\lb)\han\ol{({\frak
S}_{2}\!\,_\cdot\la)}_l\cap\ol{\La^+(2,r)}_l$.   Hence, ${\mathfrak b}^{2,r}(\lb)=\ol{({\frak
S}_{2}\!\,_\cdot\la)}_l\cap\ol{\La^+(2,r)}_l$, and consequently,
${\mathfrak b}^{2,r}(\lb)=\ol{\sB^{2,r}(\la)}_l=\ol{\sB_1^{2,r}(\la)}_l$, by \ref{block1}
and \ref{block2}.
\end{proof}

We are now going to establish the fact that any non-semisimple $\tu_k(2,r)$ has infinite representation type. We need the following three simple lemmas.
\begin{Lem}\label{indc}
Let $V$ be an $s_k(n,r)$-module. Then, $V$ is an indecomposable
$s_k(n,r)$-module if and only if $V$ is an indecomposable
$\tu_k(n,r)$-module.
\end{Lem}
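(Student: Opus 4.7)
The plan is to prove the two implications separately, using the sandwich $u_k(n,r)_1\subseteq\tu_k(n,r)\subseteq s_k(n,r)$ from \eqref{sandwich} together with the identification $s_k(n,r)\cong s_k(n,r)_1$ from \ref{the relation between the little q-Schur algebra with the infinitesimal q-Schur algebra}, and the standard correspondence between $s_k(n,r)_1$-modules (resp.\ $u_k(n,r)_1$-modules) and polynomial $G_1T$-modules (resp.\ $G_1$-modules) of degree $r$, cf.\ \cite{Cox,Cox00}.

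The forward implication, that $\tu_k(n,r)$-indecomposability implies $s_k(n,r)$-indecomposability, is immediate. Since $\tu_k(n,r)$ is a subalgebra of $s_k(n,r)$, every $s_k(n,r)$-submodule of $V$ is automatically a $\tu_k(n,r)$-submodule, so any nontrivial $s_k(n,r)$-decomposition $V=W_1\oplus W_2$ is simultaneously a nontrivial $\tu_k(n,r)$-decomposition, contradicting $\tu_k(n,r)$-indecomposability.

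For the converse, the strategy is to squeeze $\tu_k(n,r)$ between $u_k(n,r)_1$ and $s_k(n,r)$. First I would establish that $V$ is indecomposable as an $s_k(n,r)$-module if and only if $V$ is indecomposable as a $u_k(n,r)_1$-module. Under the polynomial module identifications above, this is the indecomposability companion to the socle identity $\soc_{G_1T}V=\soc_{G_1}V$ of \cite[3.1(18)(iii)]{Don} already invoked in the proof of \ref{socle}; it asserts that restriction from polynomial $G_1T$-modules of degree $r$ to polynomial $G_1$-modules preserves indecomposability. Granted this, the argument closes immediately: if $V$ is $s_k(n,r)$-indecomposable, then $V$ is $u_k(n,r)_1$-indecomposable, and any nontrivial $\tu_k(n,r)$-decomposition $V=V_1\oplus V_2$ would, via $u_k(n,r)_1\subseteq\tu_k(n,r)$, restrict to a nontrivial $u_k(n,r)_1$-decomposition, which is the desired contradiction.

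The main obstacle in this plan is verifying the indecomposability companion to Donkin's socle identity. The socle equality alone is not sufficient, since an indecomposable module need not have simple socle in general. However, the companion statement should follow by combining \ref{socle} with the classifications of simple modules in Theorems \ref{the relation between various irreducible module} and \ref{the classification of irreducible module of u(n,r)}, together with the complementary fact that nonsplit extensions between simple polynomial $G_1T$-modules of degree $r$ remain nonsplit when restricted to $G_1$ (an $\Ext^1$-analogue of the socle preservation). This ensures that the full submodule lattice structure passes faithfully between $s_k(n,r)$- and $u_k(n,r)_1$-module categories, so that indecomposability on one side forces indecomposability on the other.
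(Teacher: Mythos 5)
Your overall route coincides with the paper's: the implication ``$\tu_k(n,r)$-indecomposable $\Rightarrow$ $s_k(n,r)$-indecomposable'' is immediate from $\tu_k(n,r)\han s_k(n,r)$, and for the converse you, like the paper, squeeze $\tu_k(n,r)$ in the sandwich $u_k(n,r)_1\han\tu_k(n,r)\han s_k(n,r)_1$ and translate to the comparison of $G_1T$- and $G_1$-modules, so that the whole content of the lemma is the statement that an indecomposable (polynomial, degree $r$) $G_1T$-module remains indecomposable on restriction to $G_1$. The gap is that you do not actually prove this key step. The paper simply quotes it from \cite[3.1(18)]{Don} (the same item whose part (iii) gives the socle identity used in \ref{socle}); it is genuinely stronger than the socle identity, and your proposed derivation --- socle preservation, plus preservation of nonsplit extensions between simples, hence ``the full submodule lattice structure passes faithfully'' --- does not work. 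The lattice claim is false: restriction to $G_1$ creates many new submodules. For instance, take $n=2$, $r=2l$, $\la=(2l,0)$ and $\mu=(l,l)$; both lie in $\sfX_1(l,r)$, so $\hl_1(\la)$ and $\hl_1(\mu)$ are nonisomorphic simple $s_k(2,2l)_1$-modules, yet both restrict to the one-dimensional module $L_1(0,0)$ over $G_1$. Hence $V=\hl_1(\la)\oplus\hl_1(\mu)$ has exactly four $G_1T$-submodules, while $V|_{G_1}\cong L_1(0,0)\oplus L_1(0,0)$ has a whole one-parameter family of $G_1$-submodules, almost none of them $G_1T$-stable. (This $V$ is of course decomposable on both sides, so it is not a counterexample to the lemma, but it destroys the ``faithful lattice'' argument.)

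More fundamentally, indecomposability is a statement about idempotents: one must show that the enlargement $\End_{s_k(n,r)_1}(V)\han\End_{u_k(n,r)_1}(V)$ acquires no new nontrivial idempotents when the smaller endomorphism algebra is local, and socle and $\Ext^1$ information about simple modules does not control this for an arbitrary $V$ (and the lemma is stated, and needed, for arbitrary modules; your $\Ext^1$ observation for non-isomorphic simples does follow from the socle identity, but it is not enough). So your argument has a hole exactly at the step that carries all the weight. The repair is to invoke \cite[3.1(18)]{Don} for ``$V$ is indecomposable over $G_1T$ if and only if it is indecomposable over $G_1$,'' which is precisely what the paper does; with that in hand, your endgame (any nontrivial $\tu_k(n,r)$-decomposition is in particular a $u_k(n,r)_1$-decomposition) is exactly the paper's conclusion via \eqref{sandwich}.
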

\begin{proof}
It is clear that $V$ is an indecomposable $s_k(n,r)_1$-module
(respectively, $u_k(n,r)_1$-module) if and only if $V$ is an
indecomposable $G_1T$-module (respectively, $G_1$-module). By
\cite[3.1(18)]{Don}, $V$ is an indecomposable $G_1T$-module
if and only if $V$ is an indecomposable $G_1$-module. The
assertion now follows from \eqref{sandwich}.
\end{proof}

\begin{Lem}[{\cite[3.4(2)]{Fu08}}]\label{rbe}
Let $N$ be an $U_k(n,r)$-module with two composition factors
$L_k(\la)$ and $L_k(\mu)$, where $\la\in \sfX_h$ and
$\mu\in\La^+(n,r)$ with $\soc_{U_k(n,r)}N\cong L_k(\la)$. Assume
that $L_k(\mu)=\oplus_{j=1}^s\hlr(\mu_j)$ is the decomposition of
$L_k(\mu)$ into irreducible $s_k(n,r)_h$-modules. If $\hlr(\la)\ncong\hlr(\mu_j)$
as $G_h$-modules for all $j$, then $\soc_{s_k(n,r)_h}N\cong
L_k(\la)\cong\hlr(\la)$.
\end{Lem}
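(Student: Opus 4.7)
The plan is to first place $\hlr(\la)$ inside $\soc_{s_k(n,r)_h}N$, and then rule out any other simple $s_k(n,r)_h$-summand of the socle by combining the reduction $\soc_{s_k(n,r)_h}=\soc_{G_h}$ (from \cite[3.1(18)(iii)]{Don}) with the $U_k(n,r)$-socle hypothesis and the non-isomorphism assumption on $\hlr(\la)$ and the $\hlr(\mu_j)$.

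First, because $\la\in\sfX_h$ is restricted, the simple $U_k(n,r)$-module $L_k(\la)$ restricts to the simple $s_k(n,r)_h$-module $\hlr(\la)$; this is the general-$h$ analogue of Lemma \ref{res Lk(la)} and follows from Theorem \ref{the relation between various irreducible module}. Consequently $\hlr(\la)=L_k(\la)\subseteq\soc_{s_k(n,r)_h}N$, so the nontrivial content is the reverse inclusion. I would next reduce to $G_h$-modules via the $h$-version of the identity $\soc_{s_k(n,r)_h}V=\soc_{G_h}V$, the same ingredient behind Lemma \ref{socle}. As a $G_h$-sequence, $0\to\hlr(\la)\to N\to L_k(\mu)\to 0$ becomes $0\to L_h(\la)\to N\to\bigoplus_{j=1}^{s}L_h(\mu_j')\to 0$, where $L_h(\mu_j')=\hlr(\mu_j)|_{G_h}$, and by hypothesis $L_h(\la)\not\cong L_h(\mu_j')$ for every $j$. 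Applying $\Hom_{G_h}(L_h(\la),-)$ then yields $\Hom_{G_h}(L_h(\la),N)\cong k$ by Schur's lemma, so the $L_h(\la)$-isotypic part of $\soc_{G_h}N$ is exactly $L_h(\la)$.

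It remains to exclude a simple $G_h$-submodule $S\subseteq N$ with $S\cong L_h(\mu_j')$ for some $j$ and $S\cap L_h(\la)=0$. Assuming such $S$ exists, the $U_k(n,r)$-submodule $W=U_k(n,r)\cdot S\subseteq N$ must be either $L_k(\la)$ or $N$, since the only nonzero $U_k(n,r)$-submodules of the length-two module $N$ with simple socle $L_k(\la)$ are these two. The first option is ruled out by $S\not\cong\hlr(\la)$ together with the simplicity of $\hlr(\la)$, so $W=N$: that is, $S$ generates $N$ over $U_k(n,r)$.

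The main obstacle, and the step requiring most care, is converting this last statement into a contradiction. My plan is to use the Frobenius reciprocity adjunction $\Hom_{U_k(n,r)}(U_k(n,r)\otimes_{s_k(n,r)_h}S,L_k(\la))=\Hom_{s_k(n,r)_h}(S,\hlr(\la))=0$ to conclude that $L_k(\la)$ cannot be a simple quotient of the induction $U_k(n,r)\otimes_{s_k(n,r)_h}S$, and then exploit the surjection $U_k(n,r)\otimes_{s_k(n,r)_h}S\twoheadrightarrow N$ together with the structure of $N$ (socle $L_k(\la)$, head $L_k(\mu)$) to force a splitting of the $U_k(n,r)$-sequence $0\to L_k(\la)\to N\to L_k(\mu)\to 0$, contradicting $\soc_{U_k(n,r)}N\cong L_k(\la)$.
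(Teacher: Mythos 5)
Your steps up to the point where $S$ generates $N$ are sound: $L_k(\la)|_{s_k(n,r)_h}\cong\hlr(\la)$ sits in the socle, the reduction $\soc_{s_k(n,r)_h}N=\soc_{G_h}N$ is the right move, the Hom-computation showing that $L_h(\la)$ occurs exactly once in $\soc_{G_h}N$ is correct, and the dichotomy $U_k(n,r)\cdot S\in\{L_k(\la),N\}$ with the first case excluded is fine. (One bookkeeping point: you pick $S$ as a simple $G_h$-submodule but then induce from $s_k(n,r)_h$; to apply $U_k(n,r)\otimes_{s_k(n,r)_h}-$ you should choose $S$ as a simple $s_k(n,r)_h$-submodule of $\soc_{s_k(n,r)_h}N$ not isomorphic to $\hlr(\la)$, which is possible since $\hlr(\la)$ occurs only once among the $G_hT$-composition factors of $N$; this is repairable.)

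The genuine gap is the final step. Frobenius reciprocity gives $\Hom_{U_k(n,r)}(U_k(n,r)\otimes_{s_k(n,r)_h}S,\,L_k(\la))\cong\Hom_{s_k(n,r)_h}(S,\hlr(\la))=0$, and the surjection $U_k(n,r)\otimes_{s_k(n,r)_h}S\twoheadrightarrow N$ then only yields $\Hom_{U_k(n,r)}(N,L_k(\la))=0$, i.e.\ $L_k(\la)$ is not in the head of $N$. But that is already known (the head of $N$ is $L_k(\mu)$, since the extension is non-split), so no contradiction arises, and nothing here "forces a splitting" of $0\to L_k(\la)\to N\to L_k(\mu)\to 0$; the facts you have assembled are purely formal and would hold verbatim for an arbitrary subalgebra, so they cannot rule out the offending $S$. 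What the argument actually needs is the special relationship between $G_h$ and the quantum group $G$: $G_h$ is a \emph{normal} subgroup scheme, so $\soc_{G_h}N$ is a $U_k(n,r)$-submodule of $N$ (the quantum analogue of Jantzen I.6.16; cf.\ \cite[\S3.1--3.2]{Don}), hence equals $L_k(\la)$ or $N$. If it were $N$, then $N$ would be $G_h$-semisimple and, because $\hlr(\la)\ncong\hlr(\mu_j)$ as $G_h$-modules for all $j$, its $L_h(\la)$-isotypic component is exactly $L_k(\la)$; since every simple $G_h$-module involved extends to $G$, this isotypic component and its complement are $G$-stable, so the sequence would split over $U_k(n,r)$, contradicting $\soc_{U_k(n,r)}N\cong L_k(\la)$. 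This normality/isotypic-component input is the missing idea; without it your plan cannot be completed. (Note the paper itself does not reprove this lemma but quotes it from \cite[3.4(2)]{Fu08}, where an argument of this kind is used.)
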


%We will repeatedly use the following fact; see \cite{Bo} or \cite[I.4.7]{Erd90}. Let $A$ be a finite dimensional algebra and  $e\in A$ an idempotent. If $eAe$ has infinite representation type, then so does $A$.

\begin{Lem}\label{decomposition of Ae}
Let $A$ be finite dimensional $k$-algebra and $e$ is an idempotent element in $A$. Assume $\{L_i\mid i\in I\}$ is a complete set of non-isomorphic irreducible $A$-modules. Then we have
$$Ae\cong\bigoplus_{i\in I}\dim_k(eL_i)P(L_i),$$ where $P(L_i)$ is the projective cover of $L_i$. In particular, if $l$ is odd and $A=U_k(2,r)$ with $r=l$ or $l+1$, then $U_k(2,l)\ttk_{(l-1,1)}\cong P(l-1,1)$, $U_k(2,l)\ttk_{(l,0)}\cong U_k(2,l)\ttk_{(0,l)}\cong P(l,0)$, and
$U_k(2,l+1)\ttk_{(l-1,2)}\cong P(l-1,2)\oplus
P(l,1)$, $U_k(2,l+1)\ttk_{(l+1,0)}\oplus
U_k(2,l+1)\ttk_{(1,l)}\cong 2P(l+1,0)\oplus P(l,1)$.
\end{Lem}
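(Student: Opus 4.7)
The lemma splits into a general homological statement and four explicit computations for $U_k(2,r)$, and I would treat these in turn.

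For the general part, since $e^2=e$, the left ideal $Ae$ is a projective $A$-module, so by Krull--Schmidt $Ae\cong\bigoplus_{i\in I}m_i\,P(L_i)$ for some $m_i\geq 0$. To pin down $m_j$, apply $\Hom_A(-,L_j)$: the evaluation map $f\mapsto f(e)$ gives $\Hom_A(Ae,L_j)\cong eL_j$ as $k$-vector spaces, while $\dim_k\Hom_A(P(L_i),L_j)=\delta_{ij}\dim_k\End_A(L_j)=\delta_{ij}$, the last equality using that $k$ is algebraically closed. Comparing dimensions yields $m_j=\dim_k(eL_j)$.

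For the specific cases, I would take $A=U_k(2,r)$ and $e=\ttk_\la$; by the idempotent decomposition in Lemma~\ref{the property of Ur}, $\ttk_\la L_k(\mu)$ is precisely the $\la$-weight space of the simple $U_k(2,r)$-module $L_k(\mu)$, so the task is to tabulate the weight supports of the simples. The simples of $U_k(2,r)$ are indexed by $\La^+(2,r)$, and for restricted highest weight $\mu_1-\mu_2\leq l-1$, $L_k(\mu)$ coincides with the Weyl module, whose weight support is $\{(\mu_1-j,\mu_2+j):0\leq j\leq\mu_1-\mu_2\}$ with all weight spaces one-dimensional. The two non-restricted simples $L_k(l,0)$ and $L_k(l+1,0)$ are handled by Steinberg's tensor product theorem: $L_k(l)$ is two-dimensional with support $\{(l,0),(0,l)\}$, and $L_k(l+1)$ is four-dimensional with support $\{(l+1,0),(l,1),(1,l),(0,l+1)\}$, each weight space one-dimensional.

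With these supports in hand the four isomorphisms follow by bookkeeping: I would list, for each of the weights $(l-1,1),(l,0),(0,l),(l-1,2),(l+1,0),(1,l)$ appearing in the statement, which simples $L_k(\mu)$ contain it (with multiplicity one in each case) --- for instance $(l-1,2)$ appears only in $L_k(l,1)$ and $L_k(l-1,2)$, while $(1,l)$ appears only in $L_k(l+1,0)$ and $L_k(l,1)$ --- and plug the resulting multiplicities into the general formula. The main subtlety is the correct application of Steinberg's theorem to pin down the weight supports of the non-restricted simples $L_k(l)$ and $L_k(l+1)$, and the verification that no higher simple $L_k(l-j,j+1)$ accidentally contributes to any of the weights in question; the latter is immediate from the chain of restricted weight supports shrinking as $j$ grows.
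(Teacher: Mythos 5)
Your proposal is correct and follows essentially the same route as the paper: decompose the projective module $Ae$ and read off the multiplicities from $\dim_k\Hom_A(Ae,L_i)=\dim_k(eL_i)$ (using $\End_A(L_i)=k$, as $k$ is algebraically closed in this section), then tabulate the weight supports of the simples of $U_k(2,l)$ and $U_k(2,l+1)$ — restricted simples equal to Weyl modules, and $L_k(l,0)$, $L_k(l+1,0)$ handled by the tensor product theorem. The weight bookkeeping you describe matches the paper's computation exactly.
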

\begin{proof}
Since $e$ is an idempotent element in $A$, $Ae$ is projective and hence we may write $$Ae\cong\bigoplus_{i\in I}d_iP(L_i),$$
where $d_i\in\mbn$. Then
$\dim_k(eL_i)=\dim_k\Hom_A(Ae,L_i)=\sum_{j\in I}d_j\dim_k\Hom_A(P_j,L_i)=d_i$ for $i\in I$.

The last statement follows from the following facts:  If $A=U_k(2,l)$, then there are $\frac{l-1}2+1$ simple modules:
$L_k(l-i,i)$ ($0\leq i\leq \frac{l-1}2$). For each $1\leq i\leq \frac{l-1}2$, $L_k(l-i,i)$ has dimension $l-2i+1$ and weights
$(l-i-j,i+j)$ with $0\leq j\leq l-2i$, while $L_k(l,0)$ has dimension 2 and  weights $(l,0)$ and $(0,l)$ by the tensor product theorem.
If $A=U_k(2,l+1)$, then there are $\frac{l+1}2+1$ simple modules:
$L_k(l+1-i,i)$ ($0\leq i\leq \frac{l+1}2$). For each $1\leq i\leq \frac{l+1}2$, $L_k(l+1-i,i)$ has dimension $l-2i+2$ and weights
$(l+1-i-j,i+j)$ with $0\leq j\leq l-2i+1$, while $L_k(l+1,0)$ has dimension 4 and weights $(l+1,0),(l,1), (1,l)$ and $(0,l+1)$.
\end{proof}

For $\la\in\La^+(n,r)$ let $P(\la)$ be the projective cover of
$L_k(\la)$ as a $U_k(n,r)$-module. For $\la\in\La^+(n,r)$ let
$\mathfrak{p}(\bar\la)$ be the projective cover of $\fL_k(\lb)$ as a
$\tu_k(n,r)$-module.

\begin{Prop}\label{irt1}
The algebra $\tu_k(2,l)$ has infinite representation type.
\end{Prop}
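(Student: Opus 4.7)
The plan is to localize to the unique non-semisimple block of $\tu_k(2,l)$ and construct infinitely many pairwise non-isomorphic indecomposables in it. By \ref{block}, this block is
\[
B := \mathfrak b^{2,l}(\ol{(l,0)}) = \{\ol{(l,0)},\ \ol{(l-1,1)}\},
\]
and the remaining blocks are the singletons $\{\ol{(l-i,i)}\}$ for $2\leq i\leq(l-1)/2$, each contributing only a simple projective summand. Hence $\tu_k(2,l)$ has finite representation type if and only if $B$ does, and it suffices to show $B$ has infinite representation type.

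The first step is to pin down the Cartan data of $B$ by computing the dimensions and composition multiplicities of the indecomposable projective covers $\mathfrak p(\ol{(l,0)})$ and $\mathfrak p(\ol{(l-1,1)})$. The decisive input is the identity
\[
\ttp_{\ol{(0,0)}}=\ttk_{(l,0)}+\ttk_{(0,l)}
\]
bundling two distinct $U_k(2,l)$-weight idempotents into a single idempotent of $\tu_k(2,l)$. By Lemma~\ref{decomposition of Ae}, $U_k(2,l)\ttk_{(l,0)}\cong U_k(2,l)\ttk_{(0,l)}\cong P(l,0)$ of dimension $l+1$ and $U_k(2,l)\ttk_{(l-1,1)}\cong P(l-1,1)$ of dimension $2l$; I will compute $\dim\tu_k(2,l)\ttp_{\ol{(0,0)}}$ and $\dim\tu_k(2,l)\ttp_{\ol{(l-1,1)}}$ via the explicit basis of Lemma~\ref{basis-tuk(n,r)}. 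Combined with the simple dimensions coming from Theorem~\ref{the classification of irreducible module of u(n,r)} (namely $\dim\fL_k(\ol{(l,0)})=1$ and $\dim\fL_k(\ol{(l-1,1)})=l-1$), this yields the Cartan matrix of $B$. The key qualitative feature to be extracted is that, because $\ttp_{\ol{(0,0)}}$ aggregates two $U_k(2,l)$-weights, the off-diagonal multiplicities in $\mathfrak p(\ol{(l,0)})$ are strictly larger than in the quasi-hereditary $q$-Schur picture, and the Cartan determinant will be seen to have absolute value $\geq 2$.

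The second step is to exhibit an infinite family of pairwise non-isomorphic indecomposable $B$-modules by restriction from $U_k(2,l)$. For a $U_k(2,l)$-module $N$ realized as a non-split extension of $L_k(l-1,1)$ by $L_k(l,0)$ in the principal block, Lemma~\ref{rbe} applies: with $\la=(l-1,1)\in\sfX_1$ and $\mu=(l,0)$, the $s_k(2,l)_1$-decomposition $L_k(l,0)|_{s_k(2,l)_1}\cong \hat L_1(l,0)\oplus \hat L_1(0,l)$ (a consequence of Steinberg-type structure, with $\hat L_1(l,0)$ and $\hat L_1(0,l)$ distinct simple $s_k(2,l)_1$-modules by \ref{the relation between various irreducible module}) shares no simple summand with $\hat L_1(l-1,1)$. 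Hence $\soc_{s_k(2,l)_1}N\cong \hat L_1(l-1,1)$, and by Lemma~\ref{socle} $\soc_{\tu_k(2,l)}N\cong\fL_k(\ol{(l-1,1)})$, so $N|_{\tu_k(2,l)}$ is indecomposable. To obtain infinitely many, I will vary the extension data: the weight aggregation in $\ttp_{\ol{(0,0)}}$ lifts the single $U_k(2,l)$-extension to a one-parameter family of non-equivalent $\tu_k(2,l)$-extensions, parametrized by the relative scaling between the weight-$(l,0)$ and weight-$(0,l)$ lifts of the head $L_k(l,0)$, and I will verify that distinct parameter values yield non-isomorphic $\tu_k(2,l)$-modules.

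The main obstacle I anticipate is this final non-isomorphism check: since $\tu_k(2,l)$ cannot separate the weights $(l,0)$ and $(0,l)$ the way $U_k(2,l)$ does, some of the parametrized extensions might a priori be identified under $\tu_k(2,l)$-automorphisms. An endomorphism-ring computation, informed by the Cartan data from the first step, is needed to rule this out. As an alternative route, the infinite-representation-type conclusion can also be read off structurally from the Cartan-determinant computation in step one via a general classification theorem for representation-finite self-injective algebras, bypassing the explicit family construction altogether.
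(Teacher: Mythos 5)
Your main route is genuinely different from the paper's and it does work, modulo the repairs below. The paper also localizes to the block $\{\ol{(l,0)},\ol{(l-1,1)}\}$ via \ref{block}, but then determines the full Loewy structure of both indecomposable projectives of the block (using \ref{decomposition of Ae}, \ref{rbe}, \ref{socle}, \ref{indc} and an explicit basis computation for $\tu_k(2,l)\ttp_{\ol{(l,0)}}$), reads off the $\Ext$ quiver of the basic algebra (two arrows in each direction) and concludes by passing to the quotient by the square of the radical (Drozd--Kirichenko) and applying the separated-quiver criterion from Pierce. You instead go straight for an infinite family: restricting a non-split $U_k(2,l)$-extension $N$ with socle $L_k(l-1,1)$ and top $L_k(l,0)$, you get (by \ref{rbe} and \ref{socle}, exactly as in the paper) that $\soc_{\tu_k(2,l)}N\cong\fL_k(\ol{(l-1,1)})$ stays simple while the top splits as $L_k(l,0)|_{\tu_k(2,l)}\cong2\fL_k(\ol{(0,0)})$. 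The two pullbacks along the copies of $\fL_k(\ol{(0,0)})$ in the top are then linearly independent in $\Ext^1_{\tu_k(2,l)}(\fL_k(\ol{(0,0)}),\fL_k(\ol{(l-1,1)}))$ (a nontrivial dependence would produce, by pulling back along the corresponding line, a split extension and hence a simple submodule of $N$ isomorphic to $\fL_k(\ol{(0,0)})$, contradicting the simplicity of the socle). Your ``main obstacle'' is in fact routine: for non-split extensions between two non-isomorphic simples whose endomorphism rings are $k$ ($k$ is algebraically closed here), the isomorphism classes of middle terms are parametrized exactly by the projective space of $\Ext^1$, so a two-dimensional $\Ext$ space gives infinitely many pairwise non-isomorphic indecomposables and hence infinite representation type. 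This endgame is shorter than the paper's and avoids the quiver-with-relations analysis; both arguments ultimately exploit the same doubling phenomenon $\ttp_{\ol{(0,0)}}=\ttk_{(l,0)}+\ttk_{(0,l)}$.

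Two genuine issues must be fixed. First, you never justify the existence of the starting non-split $U_k(2,l)$-extension $N$: the paper gets it from Thams' result that $\Delta(l,0)$ has socle $L_k(l-1,1)$ and top $L_k(l,0)$; alternatively, \ref{block1} puts $(l,0)$ and $(l-1,1)$ in one $U_k(2,l)$-block and the contravariant duality used in \ref{symmetric} (which exists already on $U_k(2,l)$ and fixes the simples) lets you arrange the socle to be $L_k(l-1,1)$. Some such input has to be cited; without it there is no module to restrict. Second, your proposed fallback -- reading infinite type off the Cartan determinant via the classification of representation-finite self-injective algebras -- does not work: the block is not self-injective, since $\mathfrak{p}(\ol{(l,0)})\cong\tu_k(2,l)\ttp_{\ol{(l,0)}}$ has socle $2\fL_k(\ol{(l-1,1)})$, which is not simple; and in any case a Cartan determinant of absolute value at least $2$ is perfectly compatible with finite representation type (e.g.\ $k[x]/(x^2)$ has Cartan matrix $(2)$), so the Cartan data alone, which was also your stated source of the ``qualitative feature'', decides nothing. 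Drop the fallback and complete the explicit family argument.
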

\begin{proof}
Let $\la=(l,0)$ and $\mu=(l-1,1)$.   By \cite{Th}, the standard module
$\Delta(\la)$ has two composition factors with socle $L_k(\mu)$.
Since $U_k(2,r)$ is semisimple for $l>r$ (see, e.g., \cite{EN}), we have, for any
$\nu=(\nu_1,\nu_2)\in\La^+(2,l)$ with $\nu\neq\la$,
$\Delta(\nu)\cong\Delta(\nu_1-\nu_2,0)\ot\det_q^{\nu_2}\cong
L_k(\nu_1-\nu_2,0)\ot\det_q^{\nu_2}\cong L_k(\nu)$.
Hence, by the Brauer-Humphreys reciprocity, $P(\mu)={\Delta(\mu)\atop\Delta(\la)}$ and $P(\la)=\Delta(\la)$ are
uniserial modules with composition series
\begin{equation}\label{eq1-rep}
\begin{array} {cccccc}
 P(\mu):& L_k(\mu) &\qquad  P(\la):&L_k(\la)&\qquad  P(\nu):&L_k(\nu)\\
 &L_k(\la)&  &L_k(\mu)&&\\
&L_k(\mu) & & & &\\
 \end{array}
\end{equation}
where $\nu\not=\la,\mu$. By \ref{block}, we have
${\mathfrak b}^{2,r}(\lb)=\{\lb,\mb\}$ and ${\mathfrak b}^{2,r}(\bar\nu)=\{\nb\}$ for
$\nu\not=\la,\mu$.
Using \ref{decomposition of Ae}, we see that  $\tu_k(2,l)\ttp_{\mb}=U_k(2,l)\ttk_{\mu}\cong P(\mu)$. Hence,
$P(\mu)|_{\tu_k(2,l)}$ is a projective $\tu_k(2,l)$-module.
By \ref{rbe}, we first have $\soc_{s_k(2,l)}\Delta(\la)=\h L_1(\mu)$. Applying \ref{rbe} again to the contravariant dual of $P(\mu)/L_k(\mu)$
(see the proof of \ref{symmetric}) yields $\soc_{s_k(2,l)}(P(\mu)/L_k(\mu))=\h L_1(\la)$. Hence,
$\soc_{s_k(2,l)}P(\mu)$ is irreducible and hence $P(\mu)|_{s_k(2,l)}$ is indecomposable. This together with \ref{indc} implies that $P(\mu)|_{\tu_k(2,l)}$ is indecomposable. Thus, $P(\mu)|_{\tu_k(2,l)}\cong\mathfrak{p}(\bar\mu)$ has the following structure:
$$\begin{array} {cc}
 \mathfrak{p}(\bar\mu):& \fL_k(\bar\mu) \\
 &2\fL_k(\bar\la)\\
&\fL_k(\bar\mu)  \\
 \end{array}$$
Here $2\fL_k(\bar\la)$ means $\fL_k(\bar\la)\oplus \fL_k(\bar\la)$. (Note that $\fL_k(\bar\la)\cong\fL_k(0,0)$.)

Now let us determine the structure of  $\mathfrak{p}(\bar\la)$.
By \ref{decomposition of Ae} we have
\begin{equation}\label{eq2-rep}
U_k(2,l)\ttk_\la\cong U_k(2,l)\ttk_\dt\cong P(\la)
\end{equation}
where $\dt=(0,l)$. Let
\begin{equation*}
\begin{split}
W_1&=\text{span}\left\{
\bigg[\bigg(\begin{matrix}
a_{1,1}&0\\
a_{2,1}&0
\end{matrix}\bigg)\bigg]
\bigg|1\leq\,a_{1,1},a_{2,1}\leq l-1,\,a_{1,1}+a_{2,1}=l\right\}\\
W_2&=\text{span}\left\{
\bigg[\bigg(\begin{matrix}
0&a_{1,2}\\
0&a_{2,2}
\end{matrix}\bigg)\bigg]
\bigg|1\leq\,a_{1,2},a_{2,2}\leq l-1,\,a_{1,2}+a_{2,2}=l\right\}.
\end{split}
\end{equation*}
Then, there are vector space decompositions:
\begin{equation}\label{eq3-rep}
U_k(2,l)\ttk_\la=W_1\oplus\text{span}
\{
\big[\big(\begin{smallmatrix}
l& 0\\
0&0
\end{smallmatrix}\big)\big],
\big[\big(\begin{smallmatrix}
0& 0\\
l&0
\end{smallmatrix}\big)\big]
\},\quad
U_k(2,l)\ttk_\dt=W_2\oplus\text{span}
\{
\big[\big(\begin{smallmatrix}
0& l\\
0&0
\end{smallmatrix}\big)\big],
\big[\big(\begin{smallmatrix}
0& 0\\
0&l
\end{smallmatrix}\big)\big]
\},\quad
\end{equation}
Clearly, $\dim_k L_k(\la)=2$ and $L_k(\la)$ has only two weights $(l,0)$ and $(0,l)$. Thus, by \eqref{eq1-rep}, \eqref{eq2-rep} and \eqref{eq3-rep},
\begin{equation}\label{eq4-rep}
W_1\cong W_2\cong L_k(\mu).
\end{equation}
Now, as a vector space, $\tu_k(2,l)\ttp_{\lb}=W_1\oplus W_2\oplus\text{span}\{\ttp_{\lb}\}$.
Furthermore, by \ref{decomposition of Ae}, $\mathfrak{p}(\bar\la)\cong \tu_k(2,l)\ttp_{\lb}$.
Thus, by \eqref{eq4-rep}, $\text{soc}_{\tu_k(2,l)}\tu_k(2,l)\ttp_{\lb}=W_1\oplus W_2\cong 2\fL_k(\bar\mu)$ and
$\tu_k(2,l)\ttp_{\lb}/(W_1\oplus W_2)\cong \fL_k(\bar\la)$. So
$\mathfrak{p}(\bar\la)\cong \tu_k(2,l)\ttp_{\lb}$ has the following structure:
$$\begin{array} {cc}
 \mathfrak{p}(\bar\la):& \fL_k(\bar\la) \\
 &2\fL_k(\bar\mu).\\
 \end{array}$$

Let $B$ be the basic
algebra of the block ${\mathfrak b}^{2,r}(\lb)$ of $\tu_k(2,l)$. Let
$v_0=\fL_k(\bar\la)$ and $v_1=\fL_k(\bar\mu)$. The $\Ext$ quiver for  $B$ is
given by Figure 1:

\begin{figure}[h]
\begin{center}
\setlength{\unitlength}{.5cm}
\begin{picture}(18,2.9)
 \put(6.9,1.9){$\bullet$} \put(6.5,1.4){$v_{0}$}
\put(10.9,1.9){$\bullet$} \put(11.2,1.4){$v_{1}$}
  \put(7.4,2.2){\vector(1,0){3.2}}
   \put(7.4,2.9){\vector(1,0){3.2}}
\put(9,3.1){$\alpha_{1}$}  \put(9,2.4){$\alpha_{2}$}
\put(10.6,1.8){\vector(-1,0){3.2}} \put(9,1.1){$\beta_{1}$}
\put(10.6,0.9){\vector(-1,0){3.2}} \put(9,0.2){$\beta_{2}$}
\end{picture}
\caption{}
\end{center}
\end{figure}
\noindent
 with relations $\bt_1\al_1=\bt_2\al_2$ and $\bt_2\al_1=\bt_1\al_2=\al_i\bt_j=0$
for all $i,j\in\{0,1\}$.
Since $\mathfrak{p}(\bar\mu)$ is also an injective module and  $\mathfrak{p}(\bar\mu)$ is
the only indecomposable projective modules of radical length
greater than 2, by \cite[9.2]{DK} the algebra $B$ has infinite
representation type if and only if $B/J^2$ has infinite
representation type, where $J$ is the radical of $B$. Thus, by applying
\cite[11.8]{Pi} to the quiver above, we conclude that the algebra $B$ has infinite representation type.
Hence, the algebra $\tu_k(2,l)$ has infinite representation type.
\end{proof}

\begin{Prop}\label{irt2}
The algebra $\tu_k(2,l+1)$ has infinite representation type.
\end{Prop}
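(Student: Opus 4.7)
The strategy is to mirror the argument of Proposition~\ref{irt1}. Put $\la=(l+1,0)$, $\mu=(l-1,2)$, and $\nu=(l,1)$ in $\La^+(2,l+1)$; since $l\geq 3$ is odd, all three lie in $\sfX_1$. The dot action of $s_1$ sends $\la$ to $(-1,l+2)\equiv\mu\pmod{l}$ and fixes $\nu$ modulo $l$, so by \ref{block} the block ${\mathfrak b}^{2,l+1}(\bar\la)=\{\bar\la,\bar\mu\}$ is the unique block of $\tu_k(2,l+1)$ of size greater than one, while $\{\bar\nu\}$ and all other $\{\bar\eta\}$ ($\eta\in\La^+(2,l+1)$) are singletons. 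Consequently, the representation type of $\tu_k(2,l+1)$ is determined by the basic algebra $B$ of the two-vertex block $\{\bar\la,\bar\mu\}$.

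First I would establish the indecomposable projective $U_k(2,l+1)$-modules in this block. Using the semisimplicity of $U_k(2,r)$ for $r<l$ (see \cite{EN}), twisting by $\det_q$, and the Steinberg tensor product theorem (which gives $\dim L_k(\la)=4$, $\dim L_k(\mu)=l-2$ and $\dim L_k(\nu)=l$), the identity $\dim\Delta(\la)=l+2=\dim L_k(\la)+\dim L_k(\mu)$ together with the maximality of $\la$ in its block forces $\Delta(\la)=P(\la)$ to be uniserial with head $L_k(\la)$ and socle $L_k(\mu)$. Brauer--Humphreys reciprocity then yields the analogue of \eqref{eq1-rep}:
\[
P(\la):\begin{array}{c}L_k(\la)\\L_k(\mu)\end{array},\qquad P(\mu):\begin{array}{c}L_k(\mu)\\L_k(\la)\\L_k(\mu)\end{array},\qquad P(\nu)=L_k(\nu).
\]

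Next I would transfer this to $\tu_k(2,l+1)$ via \ref{decomposition of Ae}. Since $\mu$ is a weight of $L_k(\mu)$ and of $L_k(\nu)$ each with multiplicity one (and of no other simple of $U_k(2,l+1)$), we get $U_k(2,l+1)\ttk_\mu\cong P(\mu)\oplus P(\nu)$. As $\ttp_{\bar\mu}=\ttk_\mu$ (the unique $\La(2,l+1)$-lift of $\bar\mu$), the argument of Proposition~\ref{irt1} gives $\tu_k(2,l+1)\ttp_{\bar\mu}=U_k(2,l+1)\ttk_\mu$. Using \ref{rbe} to conclude indecomposability of $P(\mu)$ over $s_k(2,l+1)_1$ and \ref{indc} to transfer it to $\tu_k(2,l+1)$, combined with the identification $L_k(\la)|_{\tu_k(2,l+1)}\cong 2\fL_k(\bar\la)$ (the four weights of $L_k(\la)$ reduce pairwise modulo $l$ onto the two-element weight spectrum of $\fL_k(\bar\la)$), one obtains
\[
\mathfrak{p}(\bar\mu):\begin{array}{c}\fL_k(\bar\mu)\\2\fL_k(\bar\la)\\\fL_k(\bar\mu)\end{array}.
\]

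For $\mathfrak{p}(\bar\la)$, the two $\La(2,l+1)$-preimages $(l+1,0)$ and $(1,l)$ of $\bar\la$ call for an explicit matrix-basis computation in the spirit of \eqref{eq3-rep}--\eqref{eq4-rep}, starting from $U_k(2,l+1)\ttk_{(l+1,0)}\oplus U_k(2,l+1)\ttk_{(1,l)}\cong 2P(\la)\oplus P(\nu)$. Together with the symmetric $\Ext^1$ of \ref{symmetric}, this pins down the Ext quiver of $B$: two arrows in each direction between $\bar\la$ and $\bar\mu$, matching Figure~1. With $\mathfrak{p}(\bar\mu)$ in the role of the unique indecomposable projective-injective of radical length greater than~$2$, \cite[9.2]{DK} reduces the problem to $B/J^2$, and \cite[11.8]{Pi} confirms that the resulting quiver with relations has infinite representation type; hence $\tu_k(2,l+1)$ has infinite representation type. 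The main obstacle I anticipate is the matrix-basis computation of $\tu_k(2,l+1)\ttp_{\bar\la}$: with two idempotent-lifts of $\bar\la$ in $\La(2,l+1)$, both the multiplicity $[\mathfrak{p}(\bar\la):\fL_k(\bar\la)]$ and the defining relations of $B/J^2$ are more delicate than in the proof of Proposition~\ref{irt1}, and one must check that these relations still fall into an infinite-representation-type case of \cite[11.8]{Pi}.
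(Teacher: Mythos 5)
Your route is the same as the paper's: the same three weights $\la=(l+1,0)$, $\mu=(l-1,2)$, $\dt^+=(l,1)$, the same block computation via \ref{block}, the same uniserial structure of $P(\la)$ and $P(\mu)$ over $U_k(2,l+1)$, the same use of \ref{decomposition of Ae}, \ref{rbe}, \ref{socle} and \ref{indc} to conclude that $P(\mu)|_{\tu_k(2,l+1)}\cong\mathfrak{p}(\bar\mu)$ has Loewy layers $\fL_k(\bar\mu),\,2\fL_k(\bar\la),\,\fL_k(\bar\mu)$, and the same endgame (Figure 1, \cite[9.2]{DK}, \cite[11.8]{Pi}). Up to that point your steps are correct (one small slip: $(l+1,0)$ lies in $\sfX_1(l,l+1)$, not in $\sfX_1$, which is all that is needed).

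The genuine gap is exactly the step you defer: you never determine $\mathfrak{p}(\bar\la)$, and without it your conclusion ``this pins down the Ext quiver of $B$'' and the appeal to \cite[9.2]{DK} are unsupported. Knowing $\mathfrak{p}(\bar\mu)$ and \ref{symmetric} only gives two arrows in each direction; to know there are no loops at $\bar\la$, to identify the relations, and to know that $\mathfrak{p}(\bar\mu)$ is the \emph{only} indecomposable projective of radical length greater than $2$ (the hypothesis under which \cite[9.2]{DK} reduces to $B/J^2$), one must show that $\mathfrak{p}(\bar\la)$ has head $\fL_k(\bar\la)$ and semisimple radical $2\fL_k(\bar\mu)$. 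The paper does this not by an \eqref{eq3-rep}--\eqref{eq4-rep}-style matrix-basis computation (the two lifts $(l+1,0)$ and $(1,l)$ make that awkward, as you suspected) but by a socle/weight-space argument: set $V=\tu_k(2,l+1)\ttp_{\lb}\han W=U_k(2,l+1)\ttk_{(l+1,0)}\oplus U_k(2,l+1)\ttk_{(1,l)}\cong 2P(\la)\oplus L_k(l,1)$; by \ref{socle} and \ref{rbe}, $\soc_{\tu_k(2,l+1)}W\cong 2\fL_k(\bar\mu)\oplus\fL_k(\bar\dt^+)$ with $W/\soc W\cong 4\fL_k(\bar\la)$; then the $\mu$-weight space $[\diag(\mu)]V=[\diag(\mu)]W$ is $3$-dimensional and generates $\soc_{\tu}W$, so $\soc_\tu W\han V$, and a dimension count gives $V/\soc_\tu V\cong\fL_k(\bar\la)$; stripping off the Steinberg summand yields the asserted structure of $\mathfrak{p}(\bar\la)$ and hence Figure 1 with relations $\bt_1\al_1=\bt_2\al_2$ and all other products zero. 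Alternatively, your worry about whether ``the relations still fall into an infinite-type case'' can be bypassed for the bare statement: from $\mathfrak{p}(\bar\mu)$ and \ref{symmetric} alone, the quiver of the quotient $B/J^2$ already contains two arrows in each direction between the two vertices, so $B/J^2$ has infinite representation type (its separated quiver is not a union of Dynkin diagrams), and a quotient of infinite type forces $B$, hence $\tu_k(2,l+1)$, to have infinite type; but as written your proposal neither carries out the paper's computation nor makes this reduction, so the final step is missing.
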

\begin{proof}
Let $\la=(l+1,0)$, $\mu=(l-1,2)$ and $\dt=(1,l)$. By the argument
similar to the proof of \ref{irt1} we have $P(\la)$ and $P(\mu)$
are uniserial modules with composition factors given by
$$\begin{array} {cccccc}
 P(\mu):& L_k(\mu) &\qquad  P(\la):&L_k(\la)&\qquad  P(\nu):&L_k(\nu)\\
 &L_k(\la)&  &L_k(\mu)&&\\
&L_k(\mu) & & & &\\
 \end{array}$$
where $\nu\not=\la,\mu$. By \ref{block}, we have
${\mathfrak b}^{2,r}(\lb)=\{\lb,\mb\}$ and ${\mathfrak b}^{2,r}(\bar\nu)=\{\nb\}$ for
$\nu\not=\la,\mu$. Applying \ref{decomposition of Ae} yields
 $$\tu_k(2,l+1)\ttp_{\mb}=U_k(2,l+1)\ttk_{\mu}\cong P(\mu)\oplus
P(\dt^+)\quad\text{ where }\quad\dt^+=(l,1).$$
So $P(\mu)|_{\tu_k(2,l+1)}$ is projective.
 A similar argument with \ref{rbe} as in the proof of \ref{irt1} shows that $\soc_{s_k(2,l+1)}P(\mu)$ is irreducible. Hence, $P(\mu)|_{s_k(2,l+1)}$ is indecomposable. Thus, by \ref{indc}, $P(\mu)|_{\tu_k(2,l+1)}$ is an indecomposable $\tu_k(2,l+1)$-module.
So, $P(\mu)|_{\tu_k(2,l+1)}\cong
\mathfrak{p}(\bar\mu)$. Now, by \ref{socle} and \ref{rbe}, $\mathfrak{p}(\bar\mu)$ has the following structure:
$$\begin{array} {cc}
 \mathfrak{p}(\bar\mu):& \fL_k(\bar\mu) \\
 &2\fL_k(\bar\la).\\
&\fL_k(\bar\mu)  \\
 \end{array}$$

We now determine the structure of $V:=\tu_k(2,l+1)\ttp_{\lb}$.
Let $W=U_k(2,l+1)\ttk_{\la}\oplus
U_k(2,l+1)\ttk_{\dt}$.  By \ref{decomposition of Ae}, noting that $L_k(\dt^+)$ is the Steinberg module,
\begin{equation}\label{eq5-rep}
\mathfrak{p}(\bar\la)\oplus \fL_k(\bar\dt^+)\cong V\han W\cong 2P(\la)\oplus L_k(\dt^+)
\end{equation}
So, by \ref{socle} and \ref{rbe}, $\soc_{\tu_k(2,l+1)} W\cong 2\fL_k(\bar\mu)\oplus
\fL_k(\bar\dt^+)$ and $W/\soc_{\tu_k(2,l+1)}W\cong 4\fL_k(\bar\la)$.
Thus there exist $\tu_k(2,l+1)$-submodules $W_1,W_2,W_3$ of $W$  such that $\soc_{\tu_k(2,l+1)}W=W_1\oplus W_2\oplus W_3$, $W_1\cong W_2\cong \fL_k(\bar\mu)$ and $W_3\cong \fL_k(\bar\dt^+)$.
Since $[\![\text{diag}(\bar\mu),r]\!]_1=[\text{diag}(\mu)]$, with the notation
$$V_{\mu}=[\![\text{diag}(\bar\mu),r]\!]_1V=[\text{diag}(\mu)]V\han [\text{diag}(\mu)]W= W_{\mu},$$
one computes $\dim_kV_{\mu}=\dim_k W_{\mu}=\dim_k(\soc_{\tu_k(2,l+1)}W)_{\mu}=3$. Thus, $V_{\mu}=W_{\mu}=(\soc_{\tu_k(2,l+1)}W)_{\mu}=(W_1)_{\mu}\oplus
(W_2)_{\mu}\oplus
(W_3)_{\mu}$. This implies $\soc_{\tu_k(2,l+1)}W=\tu_k(2,l+1)(W_1)_{\mu}\oplus
\tu_k(2,l+1)(W_2)_{\mu}\oplus \tu_k(2,l+1)
(W_3)_{\mu}\han V$. Hence, $\soc_{\tu_k(2,l+1)}V=\soc_{\tu_k(2,l+1)}
W\cong 2\fL_k(\bar\mu)\oplus \fL_k(\bar\dt^+)$. Since $\dim_kV/\soc_{\tu_k(2,l+1)}V=2=\dim_k \fL_k(\bar\la)$
and $V/\soc_{\tu_k(2,l+1)}V\han W/\soc_{\tu_k(2,l+1)}W\cong 4\fL_k(\bar\la)$, we have $V/\soc_{\tu_k(2,l+1)}
V\cong \fL_k(\bar\la)$. Thus, by \eqref{eq5-rep} $\mathfrak{p}(\bar\la)$ has
three composition factors with socle $2\fL_k(\bar\mu)$.

If $B$ denotes the
basic algebra of the block ${\mathfrak b}^{2,r}(\lb)$ of $\tu_k(2,l)$, then the computation above implies that the
$\Ext$ quiver for $B$ is the same as given in Figure 1 above with relations
$\bt_1\al_1=\bt_2\al_2$ and all other products are zero. Hence, $B$ has infinite representation type and, consequently, $\tu_k(2,l+1)$
has infinite representation type.
\end{proof}

%\begin{Coro} \label{n=2 case}
%For any $r\geq l$, $\tu_k(2,r)$ has infinite representation type. Hence, $\tu_k(2,r)$ has finite %representation type if and only if it is semisimple.
%\end{Coro}

We now can establish the follow classification of finite representation type for little $q$-Schur algebras.

\begin{Thm}\label{classification}
Assume $l'=l\geq3$ is odd. The little $q$-Schur algebra $\tu_k(n,r)=u_k(n,r)$ has finite representation
type if and only if $l>r$.
\end{Thm}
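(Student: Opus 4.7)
The proof splits naturally into the two implications. For ``$l > r$ implies finite representation type'', the semisimplicity theorem of Section 7 shows (since $l'=l$ is odd, ruling out the even-case anomaly that requires $l=n=2$) that $\tu_k(n,r)$ is semisimple, from which finite representation type is immediate.

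The substantive direction is the converse: assuming $l \leq r$, one must produce infinitely many pairwise non-isomorphic indecomposable $\tu_k(n,r)$-modules. I would first settle $n=2$ by combining the base cases $r=l$ and $r=l+1$ from Propositions~\ref{irt1} and~\ref{irt2} with a transfer-map induction on $r$. Specifically, the Lusztig transfer map restricts to an epimorphism $\psi_{r,r-2}:\tu_k(2,r)\twoheadrightarrow\tu_k(2,r-2)$ by \eqref{transfer map}, and inflation along any such surjection embeds $\Mod(\tu_k(2,r-2))$ fully faithfully into $\Mod(\tu_k(2,r))$; hence infinite representation type passes upward. Since the two base residues $\{l,l+1\}$ cover both parities modulo $2$ (as $l$ is odd), every $r \geq l$ reaches one of the base cases after finitely many applications of $\psi$.

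For general $n \geq 3$, the plan is to reduce to the $n=2$ case by an idempotent-truncation argument. I would exhibit an idempotent $e \in \tu_k(n,r)$ with $e\tu_k(n,r)e \cong \tu_k(2,r)$, modelled on the classical $q$-Schur truncation $e_0 U_k(n,r) e_0 \cong U_k(2,r)$ with $e_0 = \sum_{\mu \in \Lambda(2,r)}\ttk_\mu$. The natural candidate in the little $q$-Schur setting is $e = \sum \ttp_{\bar\mu}$ summed over $\bar\mu \in \overline{\Lambda(n,r)}_{l'}$ supported on the first two coordinates. Once such an isomorphism is in hand, the standard fact that infinite representation type of $eAe$ forces the same for $A$ (via the functor $Ae\otimes_{eAe}(-)$, which preserves indecomposability and non-isomorphism) completes the argument.

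The main obstacle is precisely the idempotent-truncation step. Because the bar-reduction modulo $l'$ identifies compositions differing by multiples of $l'$ in any coordinate, the candidate $e=\sum\ttp_{\bar\mu}$ above expands as $\sum\ttk_\nu$ over $\nu\in\Lambda(n,r)$ with $\nu_i\equiv 0 \pmod{l'}$ for $i>2$, which is strictly larger than $e_0$. One must therefore either check, using the basis of $\tu_k(n,r)$ described in Lemma~\ref{basis-tuk(n,r)}, that this enlargement does not obstruct the isomorphism $e\tu_k(n,r)e\cong \tu_k(2,r)$, or instead extend the block analysis of Proposition~\ref{block} to arbitrary $n$ via Cox's block classification of $s_k(n,r)_1$, thereby identifying each non-semisimple block of $\tu_k(n,r)$ with a non-semisimple block of some $\tu_k(2,r')$ already shown to have infinite representation type in the $n=2$ treatment.
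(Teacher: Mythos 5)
Your proposal follows the paper's own proof essentially step for step: semisimplicity when $l>r$, the base cases \ref{irt1} and \ref{irt2} propagated to all $r\geq l$ for $n=2$ via the transfer maps \eqref{transfer map}, and reduction of $n\geq 3$ to $n=2$ by truncation at the idempotent $e=\sum_{\la\in\ol{\La(2,r)}_l}[\![\diag(\la),r]\!]$, which is exactly the idempotent used in the paper. The one step you leave open, the isomorphism $e\tu_k(n,r)e\cong\tu_k(2,r)$, is settled in the paper precisely by your first suggested route --- a direct check against the basis $\{[\![A,r]\!]\}_{A\in\ol{\Xi(n,r)}_l}$ of $u_k(n,r)$ recalled from \cite{DFW} --- so no extension of the block analysis of \ref{block} to general $n$ is required.
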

\begin{proof} Recall from \cite[8.2(2), 8.3]{DFW} that $u_k(n,r)$ has a basis $\{[\![A,r]\!]\}_{A\in\ol{\Xi(n,r)}_l}$. If $n>2$, then $e=\sum_{\la\in\ol{\La(2,r)}_l}[\![\diag(\la),r]\!]\in \tu_k(n,r)$ is an idempotent and $e\tu_k(n,r)e\cong \tu_k(2,r)$.
Thus, if $\tu_k(2,r)$ has infinite representation type, then so does $\tu_k(n,r)$ (see \cite{Bo} or \cite[I.4.7]{Erd90} for such a general fact).
So it reduces to prove the result for $n=2$.

If $r<l$, $\tu_k(n,r)=U_k(n,r)$ is semisimple by \cite{EN}. It remains to prove that $\tu_k(2,r)$ has infinite representation type for all $r\ge l$. By the transfer map \eqref{transfer map}, we see that either $\tu_k(2,l)$ or $\tu_k(2,l+1)$ is a homomorphic image of $\tu_k(2,r)$. Since both $\tu_k(2,l)$ and $\tu_k(2,l+1)$ have infinite representation type by \ref{irt1} and \ref{irt2}, it follows that the algebra $\tu_k(2,r)$ and hence, $\tu_k(n,r)$, has infinite representation type for all $r\geq l$.
\end{proof}

A by-product of this result is the following determination of finite representation type of infinitesimal quantum $\mathfrak{gl}_n$.

\begin{Coro}
The infinitesimal quantum group $u_k(n)$ has infinite representation type for any $n$ and $l$.
In particular, $u_k(n)$ is never semisimple.
\end{Coro}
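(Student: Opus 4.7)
The plan is to exhibit $u_k(n,r)$ as a quotient of $u_k(n)$ for $r\geq l$ and then invoke Theorem \ref{classification} to transfer infinite representation type up to $u_k(n)$.

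First I would check that the epimorphism $\zeta_r:\tu_k(n)\twoheadrightarrow\tu_k(n,r)$ factors through $u_k(n)$. Since $l'=l$ is odd, Lemma \ref{the property of Ur} gives $\ttk_j^l\ttk_\la=\ep^{l\la_j}\ttk_\la=\ttk_\la$ for every $\la\in\La(n,r)$, whence $\ttk_j^l=\sum_{\la\in\La(n,r)}\ttk_j^l\ttk_\la=\sum_{\la\in\La(n,r)}\ttk_\la=1$ in $U_k(n,r)$. Consequently each $K_j^l-1$ lies in $\ker\zeta_r$, and $\zeta_r$ descends to a surjection
$$\bar\zeta_r:u_k(n)=\tu_k(n)/\langle K_1^l-1,\ldots,K_n^l-1\rangle\twoheadrightarrow \tu_k(n,r)=u_k(n,r),$$
where the last equality uses Remark \ref{unr}.

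Next I would choose $r\geq l$ (for instance $r=l$). By Theorem \ref{classification}, $u_k(n,r)$ has infinite representation type. Inflation along $\bar\zeta_r$ embeds $\Mod(u_k(n,r))$ as a full subcategory of $\Mod(u_k(n))$, and this embedding preserves indecomposability and isomorphism classes. Hence $u_k(n)$ admits infinitely many pairwise non-isomorphic finite-dimensional indecomposable modules, so it too has infinite representation type.

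For the final sentence I would use the elementary fact that a finite-dimensional semisimple $k$-algebra, being a finite product of matrix algebras over division rings, has only finitely many indecomposable modules up to isomorphism and therefore has finite representation type. Since $u_k(n)$ is finite-dimensional (it is generated by the $E_i,F_i,K_j^{\pm1}$ subject to the nilpotency relations $E_i^l=F_i^l=0$ and $K_j^l=1$), the previous paragraph forces $u_k(n)$ to be non-semisimple. There is no real obstacle here; the only thing to be careful about is verifying the factorization $K_j^l\mapsto 1$ under $\zeta_r$, which is precisely where the hypothesis $l'=l$ odd is used, and making sure the inflation functor indeed preserves infinitely many non-isomorphic indecomposables so that finite representation type fails upstairs.
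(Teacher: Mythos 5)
Your proposal is correct and follows essentially the same route as the paper: the paper's proof likewise combines Theorem \ref{classification} (infinite representation type of $\tu_k(n,l)$) with the fact that $\tu_k(n,l)$ is a homomorphic image of $u_k(n)$ (the factorization of $\zeta_r$ through $u_k(n)$ for $l'=l$ odd, recorded in Remark \ref{unr}), your write-up merely making explicit the verification $\ttk_j^l=1$ and the inflation/semisimplicity bookkeeping.
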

\begin{proof}
By \ref{classification} the algebra $\tu_k(n,l)$ has infinite representation type. This implies that $u_k(n)$ has infinite representation type since $\tu_k(n,l)$ is the homomorphic image of $u_k(n)$.
\end{proof}

\vspace{.5cm}

\section{Appendix}

%\section{The image of $U_\sZ(\frak {sl}_n)$ under the map $\zr$}
It is well known that $\zr(\bfU(\frak{sl}_n))$ is equal to
$\bfU(n,r)$. In this section, we shall prove that this is also
true over $\sZ$, that is, $\zr(U_\sZ(\frak{sl}_n))=U_\sZ(n,r)$.

Let $X_i:=\{\mu\in\La(n,r)\mid \text{max}\{\mu_j-\mu_{j+1}\ \mid 1\leq j
\leq n-1\}=i\}.$ Then we have $\La(n,r)=\bin\limits_{-r\leq i\leq
r}X_i$ (disjoint union). The definition of $\bfU(n)$ implies the
following result.
\begin{Lem}\label{the algebra automorphism sigma on U}
There is a unique $\mbq(\up)$-algebra automorphism $\s$ on
$\bfU(n)$ satisfying
$$\s(E_i)=F_i,\ \s(F_i)=E_i,\ \s(K_j)=K_j^{-1}.$$
\end{Lem}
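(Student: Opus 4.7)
My plan is to produce $\s$ as an involutive endomorphism by showing that the assignment on the generators respects every defining relation of $\bfU(n)$ listed in \cite[Def.~2.1]{DFW}. Once this is verified, the assignment extends uniquely to an algebra endomorphism of $\bfU(n)$, and because $\s^2$ fixes each generator, the map is in fact an involution and hence an automorphism. Uniqueness is automatic from the fact that $E_i, F_i, K_j^{\pm1}$ generate $\bfU(n)$.

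The verification proceeds one relation class at a time. The torus relations $K_iK_j=K_jK_i$ and $K_iK_i^{-1}=1$ are preserved trivially under $K_j\mapsto K_j^{-1}$. The weight relations of the shape $K_iE_jK_i^{-1}=\up^{\dt_{i,j}-\dt_{i,j+1}}E_j$ are sent to $K_i^{-1}F_jK_i=\up^{\dt_{i,j}-\dt_{i,j+1}}F_j$, which is equivalent to the $F$-weight relation $K_iF_jK_i^{-1}=\up^{-(\dt_{i,j}-\dt_{i,j+1})}F_j$ already present in $\bfU(n)$; symmetrically the $F$-weight relations go to the $E$-weight ones. The commutator relation
$$E_iF_j-F_jE_i=\dt_{ij}\,\frac{\ti K_i-\ti K_i^{-1}}{\up-\up^{-1}}$$
becomes, under $\s$ (and using $\s(\ti K_i)=\s(K_iK_{i+1}^{-1})=K_i^{-1}K_{i+1}=\ti K_i^{-1}$),
$$F_iE_j-E_jF_i=\dt_{ij}\,\frac{\ti K_i^{-1}-\ti K_i}{\up-\up^{-1}},$$
which is the original relation after multiplying both sides by $-1$ and using $\dt_{ij}=\dt_{ji}$. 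Finally, the quantum Serre relations among the $E_i$'s are carried to the corresponding Serre relations among the $F_i$'s and vice versa, both of which hold in $\bfU(n)$.

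The only point needing care, though still entirely mechanical, is tracking the asymmetric exponents $\dt_{i,j}-\dt_{i,j+1}$ that replace the symmetric Cartan entries $a_{ij}$ used in the $\bfU(\frak{sl}_n)$-version \ref{the algebra automorphism sigma on Uupsilon(g)}. An essentially equivalent alternative is to note that $\s$ restricts to the $\mathfrak{sl}_n$-automorphism of \ref{the algebra automorphism sigma on Uupsilon(g)} on $\,'\bfU(n)\cong\bfU(\frak{sl}_n)$ via $\s(\ti K_i)=\ti K_i^{-1}$, and then to extend to $\bfU(n)$ by prescribing $\s(K_j)=K_j^{-1}$ on the extra torus generators, in which case only the relations involving $K_j$ and $E_i,F_i$ need to be rechecked. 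Either route is routine; I do not foresee any conceptual obstacle, as the lemma is simply the $\frak{gl}_n$-analogue of the already established \ref{the algebra automorphism sigma on Uupsilon(g)}.
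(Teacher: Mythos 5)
Your proposal is correct and matches the paper's intent: the paper simply states that the definition of $\bfU(n)$ (i.e., its defining relations) immediately implies the lemma, which is exactly the relation-by-relation check you carry out, with uniqueness from generation and bijectivity from $\s^2=\id$. No discrepancies to report.
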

It is clear that $\s(\big[{\ti{K}_i;c\atop
t}\big])=\big[{\ti{K}_i^{-1};c\atop t}\big]$. By definition, the
$\sZ$-algebra $U_\sZ(\frak{sl}_n)$ is generated by the elements
$E_i^{(N)}$, $F_i^{(N)}$ and $\ti{K}_i^{\pm 1}$ $(1\leq i\leq n,\
N\geq 0 )$. Since $\big[{\ti{K}_i;c\atop t}\big]\in U_\sZ(\frak
{sl}_n)$ and $\s(U_\sZ(\frak{sl}_n))=U_\sZ(\frak{sl}_n)$, we have
$\big[{\ti{K}_i^{-1};c\atop t}\big]\in U_\sZ(\frak{sl}_n)$. By
\ref{the property of Ur}(2), the following lemma holds in
$\bfU(n,r)$.
\begin{Lem}\label{the product formula in Ur}
Let $\la\in\La(n,r)$. Then we have $\big[{\ti{\ttk}_i; c\atop
t}\big]\ttk_\la=\big[{\la_i-\la_{i+1}+c \atop t}\big]\ttk_\la$.
\end{Lem}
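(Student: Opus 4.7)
The plan is to reduce this to a direct eigenvalue computation on the primitive idempotent $\ttk_\la$. First, recall from the setup of the appendix (in analogy with $\ti K_i = K_iK_{i+1}^{-1}$) that $\ti\ttk_i = \ttk_i\ttk_{i+1}^{-1}$. By \ref{the property of Ur}(2) the $\ttk_\la$ for $\la \in \La(n,r)$ are orthogonal idempotents satisfying $\ttk_j\ttk_\la = \up^{\la_j}\ttk_\la$, and hence $\ttk_{j}^{-1}\ttk_\la = \up^{-\la_j}\ttk_\la$ in the subalgebra $U_\sZ^0(n,r)$ where the $\ttk_j$ are invertible on each weight component. Consequently,
$$\ti\ttk_i\,\ttk_\la \;=\; \up^{\la_i - \la_{i+1}}\ttk_\la \qquad\text{and}\qquad \ti\ttk_i^{-1}\,\ttk_\la \;=\; \up^{-(\la_i - \la_{i+1})}\ttk_\la.$$

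Next I would simply substitute these eigenvalue identities into the definition
$$\bigg[{\ti\ttk_i;c \atop t}\bigg] \;=\; \prod_{s=1}^t \frac{\ti\ttk_i\up^{c-s+1} - \ti\ttk_i^{-1}\up^{-c+s-1}}{\up^s - \up^{-s}},$$
multiplying the whole product on the right by $\ttk_\la$ and using that $\ti\ttk_i$ commutes with $\ttk_\la$ so that the idempotent can be pushed through each factor (using $\ttk_\la^2 = \ttk_\la$ to reabsorb copies). Factor by factor one obtains
$$\frac{\up^{(\la_i - \la_{i+1}) + c - s + 1} - \up^{-(\la_i - \la_{i+1}) - c + s - 1}}{\up^s - \up^{-s}}\,\ttk_\la,$$
and the product of these scalars over $1 \le s \le t$ is precisely $\big[{\la_i - \la_{i+1} + c \atop t}\big]$ by the definition of the Gaussian binomial coefficient recalled in \S2.

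There is essentially no obstacle: the whole proof is the combination of \ref{the property of Ur}(2) with the multiplicativity of the defining product for $\big[{\ti\ttk_i;c \atop t}\big]$. The only minor thing to check carefully is that $\ttk_\la$ may be inserted into each factor of the product (so that each factor acts as a scalar on $\ttk_\la$), which follows from $\ttk_\la$ being a central idempotent in $U_\sZ^0(n,r)$ and from $\ttk_\la\cdot\ttk_\la = \ttk_\la$. This lemma will then be available in the appendix for use in verifying that $\zr(U_\sZ(\mathfrak{sl}_n)) = U_\sZ(n,r)$, where one needs to produce each $\ttk_\la$ as a polynomial in the $\ti\ttk_i^{\pm 1}$ and the $\big[{\ti\ttk_i;0 \atop t}\big]$.
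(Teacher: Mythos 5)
Your proof is correct and is essentially the paper's own argument: the paper simply notes that the lemma follows from \ref{the property of Ur}(2), i.e.\ from the eigenvalue relations $\ttk_j\ttk_\la=\up^{\la_j}\ttk_\la$ applied factor by factor to the defining product of $\big[{\ti\ttk_i;c\atop t}\big]$, which is exactly the computation you carry out. (Your appeal to centrality/idempotency of $\ttk_\la$ is harmless but not even needed, since one can just peel off the factors acting on $\ttk_\la$ from the right.)
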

\begin{Thm}\label{the image of UA(sln)}
The image of $U_\sZ(\frak{sl}_n)$ under the homomorphism $\zr$ is
equal to the algebra $U_\sZ(n,r)$. Hence, for any field $k$ which is a $\sZ$-algebra, base change
induces an epimorphism $\zeta_{r}=\zeta_r\otimes1:U_k(\frak{sl}_n)\to U_k(n,r)$.
\end{Thm}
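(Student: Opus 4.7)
My strategy is to prove the nontrivial inclusion $\zr(U_\sZ(\mathfrak{sl}_n))\supseteq U_\sZ(n,r)$. Since $U_\sZ(n,r)=\zr(U_\sZ(n))$ and $U_\sZ(n)$ is generated as a $\sZ$-algebra by the $E_i^{(m)}$, $F_i^{(m)}$ together with $U_\sZ^0(n)$, and since $\tte_i^{(m)}=\zr(E_i^{(m)})$ and $\ttf_i^{(m)}=\zr(F_i^{(m)})$ automatically lie in $\zr(U_\sZ(\mathfrak{sl}_n))$, the task reduces to $U_\sZ^0(n,r)\han\zr(U_\sZ(\mathfrak{sl}_n))$. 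Because $U_\sZ^0(n,r)$ has the $\sZ$-basis of pairwise orthogonal idempotents $\{\ttk_\la\}_{\la\in\La(n,r)}$ with $\sum_\la\ttk_\la=1$ from Lemma~\ref{the property of Ur}, it will suffice to show that every $\ttk_\la$ lies in $\zr(U_\sZ^0(\mathfrak{sl}_n))$.

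To do so I will use the commuting family of diagonal operators produced by Lemmas~\ref{the algebra automorphism sigma on U} and~\ref{the product formula in Ur}. The latter gives that $\big[{\ti\ttk_i;c\atop t}\big]\in\zr(U_\sZ^0(\mathfrak{sl}_n))$ acts on $\ttk_\mu$ by the scalar $\big[{\mu_i-\mu_{i+1}+c\atop t}\big]$, and applying the automorphism $\sg$ of Lemma~\ref{the algebra automorphism sigma on U} gives $\big[{\ti\ttk_i^{-1};c\atop t}\big]\in\zr(U_\sZ^0(\mathfrak{sl}_n))$, acting by $\big[{c-\mu_i+\mu_{i+1}\atop t}\big]$. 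Because $\sg(\mu)=r$ is fixed, the difference map $\mu\mapsto(\mu_1-\mu_2,\ldots,\mu_{n-1}-\mu_n)$ is injective on $\La(n,r)$. Hence, if I can construct, for each $i\in\{1,\ldots,n-1\}$ and each integer $a$ with $|a|\leq r$, an element $P_{i,a}\in\zr(U_\sZ^0(\mathfrak{sl}_n))$ whose scalar action on $\ttk_\mu$ is the Kronecker delta $\dt_{\mu_i-\mu_{i+1},\,a}$, then the commuting product $\prod_{i=1}^{n-1}P_{i,\,\la_i-\la_{i+1}}$ will act by $\dt_{\mu,\la}$ on every $\ttk_\mu$ and therefore coincide with $\ttk_\la$ in $U_\sZ^0(n,r)$.

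The heart of the argument, and the principal obstacle, will be realising $P_{i,a}$ integrally. My candidate is
\[
P_{i,a}=\bigg[{\ti\ttk_i;\,r\atop r+a}\bigg]\bigg[{\ti\ttk_i^{-1};\,r\atop r-a}\bigg],
\]
whose scalar on $\ttk_\mu$ is $\big[{m+r\atop r+a}\big]\big[{r-m\atop r-a}\big]$ with $m:=\mu_i-\mu_{i+1}\in[-r,r]$. Using the elementary identities $\big[{t\atop t}\big]=1$ and $\big[{c\atop t}\big]=0$ for $0\leq c<t$, the first $q$-binomial vanishes exactly when $-r\leq m<a$ and the second exactly when $a<m\leq r$, while both equal $1$ at $m=a$; in other words, exactly one factor vanishes unless $m=a$. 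The extremal cases $a=\pm r$ are harmless since one factor then collapses to the empty product $1$.

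Over $\mbq(\up)$ one could of course obtain such a projector by ordinary Lagrange interpolation in $\ti\ttk_i$; what makes the integral case delicate is that the denominators $\up^a-\up^b$ are not units of $\sZ$, which is why I rely on the $q$-binomial vanishing above rather than on Lagrange's formula. Once the Kronecker property of $P_{i,a}$ is checked, the reduction of the first paragraph finishes the proof, and the epimorphism $\zeta_r\otimes 1:U_k(\mathfrak{sl}_n)\twoheadrightarrow U_k(n,r)$ follows at once by applying $-\otimes_\sZ k$.
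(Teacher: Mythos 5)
Your proof is correct, and at the decisive step it takes a genuinely different route from the paper. Both arguments make the same initial reduction: since $\zr(U_\sZ(n))=U_\sZ(n,r)$ and the divided powers $\tte_i^{(m)},\ttf_i^{(m)}$ already lie in $U_r':=\zr(U_\sZ(\frak{sl}_n))$, everything comes down to showing $\ttk_\la\in U_r'$ for every $\la\in\La(n,r)$, using the memberships $\big[{\ti K_i^{\pm1};c\atop t}\big]\in U_\sZ(\frak{sl}_n)$ (the inverse one via $\s$) and the eigenvalue formula of \ref{the product formula in Ur}. The paper then proceeds by a rather elaborate double induction: a downward induction over the strata $X_i\han\La(n,r)$ indexed by the maximal consecutive difference, with an inner induction over the sets $Y_{s,i}$, repeatedly multiplying by previously obtained sums of idempotents to strip off unwanted terms. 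You instead exhibit $\ttk_\la$ in closed form as $\prod_{i=1}^{n-1}P_{i,\la_i-\la_{i+1}}$ with $P_{i,a}=\big[{\ti\ttk_i;r\atop r+a}\big]\big[{\ti\ttk_i^{-1};r\atop r-a}\big]$: the scalar of $P_{i,a}$ on $\ttk_\mu$ is $\big[{m+r\atop r+a}\big]\big[{r-m\atop r-a}\big]$ with $m=\mu_i-\mu_{i+1}\in[-r,r]$, and your vanishing analysis (one factor vanishes unless $m=a$, both equal $1$ at $m=a$, the factor with $t=0$ at $a=\pm r$ being the empty product $1$) is exactly right under the convention $\big[{c\atop t}\big]=0$ for $0\leq c<t$ and $\big[{t\atop t}\big]=1$; injectivity of $\mu\mapsto(\mu_i-\mu_{i+1})_{1\le i\le n-1}$ on $\La(n,r)$ then identifies the product with $\ttk_\la$, since $1=\sum_\mu\ttk_\mu$. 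This is shorter and more transparent than the paper's induction, it isolates the integrality issue cleanly (no Lagrange denominators ever appear), and its extreme case $a=r$ recovers the paper's base case $\big[{\ti\ttk_i;r\atop 2r}\big]=\ttk_{\bla_i}$. One small wording fix: the action formula $\big[{\ti\ttk_i^{-1};c\atop t}\big]\ttk_\mu=\big[{c-\mu_i+\mu_{i+1}\atop t}\big]\ttk_\mu$ is not literally obtained by ``applying $\s$'' (an automorphism of $\bfU(n)$, not of $\bfU(n,r)$); it follows from the same one-line computation as \ref{the product formula in Ur}, using $\ttk_i^{-1}\ttk_\mu=\up^{-\mu_i}\ttk_\mu$, with $\s$ needed only for the membership $\big[{\ti K_i^{-1};c\atop t}\big]\in U_\sZ(\frak{sl}_n)$ --- exactly as in the paper's own treatment of $\big[{\ti\ttk_{n-1}^{-1};r\atop 2r}\big]$.
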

\begin{proof}
Let $U_r'=\zr (U_\sZ(\frak{sl}_n))$. By \cite{Du},
$\zr(U_\sZ(n))=U_\sZ(n,r)$. Hence it is enough to prove that
$\ttk_\la\in U_r'$ for any $\la\in\La(n,r)$. We shall prove
$\ttk_{\mu}\in U_r'$ for any $\mu\in X_i$ by a downward induction on $i$.

It is clear that $X_r=\{\bla_i:=(0,\cdots,0,\underset
ir,0\cdots,0)\mid 1\leq i\leq n-1\}$ and
$X_{-r}=\{\bla_{n}:=(0,\cdots,0,r)\}$. By \ref{the property of
Ur}(1) and \ref{the product formula in Ur}, for $1\leq i\leq n-1$,
we have
$$\left[{\ti{\ttk}_i;r \atop
2r}\right]=\ttk_{\bla_i}+\sum_{\mu\in\La(n,r),\, \mu\neq
\bla_i}\left[{\mu_i-\mu_{i+1}+r \atop 2r}\right]\ttk_\mu.$$ If
$1\leq i\leq n-1$, then $0\leq \mu_i-\mu_{i+1}+r< 2r$ for any
$\mu\in\La(n,r)$ with $\mu\neq\bla_i$. Hence, $\left[{\mu_i-\mu_{i+1}+r \atop 2r}\right]=0$ and
$\big[{\ti{\ttk}_i;r
\atop 2r}\big]=\ttk_{\bla_i}\in U_r'$ for $1\leq i\leq n-1$.
Similarly, we can prove $\big[{\ti{\ttk}_{n-1}^{-1};r\atop
2r}\big]=\ttk_{\bla_{n}}\in U_r'.$ Hence for any $\mu\in X_r\cup
X_{-r}$ we have $\ttk_\mu\in U_r'$.

Now we assume that for any $\mu\in X_j$ with $j>k$ we have
$\ttk_\mu\in U_r'$. Let $\la\in X_k$. Then there exists some $\il$
such that $\la_{\il}-\la_{\ile}=k$. We need to prove $\ttk_\la\in
U_r'$.

By \ref{the property of Ur}(1) and \ref{the product formula in
Ur}, we have
$$\lk{\ti{\ttk}_\il;r \atop k+r} \rk=\sum_{\mu\in X_j,\, j\neq k}\lk{\mu_\il-\mu_\ile+r\atop k+r}\rk\ttk_\mu
+\sum_{\nu\in X_k}\lk{\nu_\il-\nu_\ile+r\atop k+r}\rk\ttk_\nu.$$
Note that for $j<k$ with $\mu\in X_j$, we have
$0\leq\mu_\il-\mu_\ile+r\leq j+r<k+r.$ Since
$0\leq\nu_\il-\nu_\ile+r\leq k+r$ for $\nu\in X_k$, we have
$0\leq\nu_\il-\nu_\ile+r< k+r$ where $\nu\in X_k$ such that
$\nu_\il-\nu_\ile\neq
 k$. It follows that
$$\lk{\ti{\ttk}_\il;r \atop k+r} \rk=\sum_{\nu\in X_k\atop\nu_\il-\nu_\ile=k}\ttk_\nu+\sum_{\mu\in X_j,\,
j>k}\lk{\mu_\il-\mu_\ile+1\atop k+1}\rk\ttk_\mu .$$ Let
$Z:=\{\nu\in X_k\mid \nu_\il-\nu_\ile=k\}$. Then by induction we have
\begin{equation}\label{the image (1) of UA(sln)}
\sum_{\nu\in Z}\ttk_\nu\in U_r'.
\end{equation}

For any $i\neq i_0$ and $-r\leq s\leq k$, let $Y_{s,i}:=\{\nu\in
Z\mid \nu_i-\nu_{i+1}=s\}$. Then for any fixed $i\neq i_0$, we have
$Z=\bin\limits_{-r\leq s\leq k}Y_{s,i}$ (disjointed union). Now
for fixed $i\neq i_0$, we prove $\sum_{\nu\in Y_{s,i}}\ttk_\nu\in
U_r'$ by induction on $s$.

For fixed $i\neq\il$, let $m:=\text{max}\{s\mid Y_{s,i}\neq \kong,\
-r\leq s\leq k\}$. By \ref{the property of Ur}(1) and \ref{the
product formula in Ur}, we have
\begin{equation*}
\begin{split}
\lk{\ti{\ttk}_i;r\atop m+r}\rk
&=\sum_{\mu\in\La(n,r)}\lk{\mu_i-\mu_{i+1}+r\atop m+r}\rk\ttk_\mu
\\&=\sum_{\nu\in Y_{m,i}}\ttk_\nu+\sum_{\nu\in Y_{s,i}\neq\kong\atop -r\leq
s<m}\lk{s+r \atop m+r}\rk\ttk_\nu+\sum_{\nu\not\in
Z}\lk{\nu_i-\nu_{i+1}+r\atop m+r}\rk\ttk_\nu\\
&=\sum_{\nu\in Y_{m,i}}\ttk_\nu+\sum_{\nu\not\in
Z}\lk{\nu_i-\nu_{i+1}+r\atop m+r}\rk\ttk_\nu\\
&(\text{since $0\leq s+r<m+r$ for $-r\leq s<m$}).
\end{split}
\end{equation*}
Hence, multiplying both sides by $\sum_{\nu\in Z}\ttk_\nu$, \eqref{the image (1) of UA(sln)} implies
$$\sum_{\nu\in Y_{m,i}}\ttk_\nu=\sum_{\nu\in Z}\ttk_\nu\lk{\ti{\ttk}_i;r\atop m+r}\rk\in U_r'.$$

Now we assume $Y_{s,i}\neq\kong$ and for any $s'$ such that $s'>s$
and $Y_{s',i}\neq\kong$ we have $\sum\limits_{\nu\in
Y_{s',i}}\ttk_\nu\in U_r'$. We now prove $\sum\limits_{\nu\in
Y_{s,i}}\ttk_\nu\in U_r'$.

By \ref{the property of Ur}(1) and \ref{the product formula in
Ur}, we have
\begin{equation*}
\begin{split}
\lk{\ti{\ttk}_i;r\atop s+r}\rk
&=\sum_{\mu\in\La(n,r)}\lk{\mu_i-\mu_{i+1}+r\atop
s+r}\rk\ttk_\mu\\
&=\sum_{\nu\in Y_{s,i}}\ttk_\nu+\sum_{\nu\in
Y_{s',i}\neq\kong\atop s<s'\leq m}\lk{s'+r \atop
s+r}\rk\ttk_\nu+\sum_{\nu\in Y_{s',i}\neq\kong\atop -r\leq
s'<s}\lk{s'+r \atop s+r}\rk\ttk_\nu +\sum_{\nu\not\in
Z}\lk{\nu_i-\nu_{i+1}+r\atop s+r}\rk\ttk_\nu\\
&=\sum_{\nu\in Y_{s,i}}\ttk_\nu+\sum_{\nu\in
Y_{s',i}\neq\kong\atop s<s'\leq m}\lk{s'+r \atop s+r}\rk\ttk_\nu
+\sum_{\nu\not\in Z}\lk{\nu_i-\nu_{i+1}+r\atop s+r}\rk\ttk_\nu.
\end{split}
\end{equation*}
By induction we have
$$\sum_{\nu\in Y_{s',i}\neq\kong\atop
s<s'\leq m}\lk{s'+r \atop s+r}\rk\ttk_\nu=\sum_{s<s'\leq
m}\lk{s'+r \atop s+r}\rk \sum_{\nu\in
Y_{s',i}\neq\kong}\ttk_\nu\in U_r'.$$ It follows that
$$\sum_{\nu\in Y_{s,i}}\ttk_\nu+\sum_{\nu\not\in
Z}\lk{\nu_i-\nu_{i+1}+r\atop
s+r}\rk\ttk_\nu=\lk{\ti{\ttk}_i;r\atop s+r}\rk-\sum_{\nu\in
Y_{s',i}\neq\kong\atop s<s'\leq m}\lk{s'+r \atop
s+r}\rk\ttk_\nu\in U_r'.$$ Hence by \eqref{the image (1) of
UA(sln)} we have
$$\sum_{\nu\in Y_{s,i}}\ttk_\nu=(\sum_{\nu\in Z}\ttk_\nu)
\cdot\biggl(\sum_{\nu\in Y_{s,i}}\ttk_\nu+\sum_{\nu\not\in
Z}\lk{\nu_i-\nu_{i+1}+r\atop s+r}\rk\ttk_\nu\biggr)\in U_r'.$$

Now we have proved $\sum\limits_{\nu\in
Y_{s,i}\neq\kong}\ttk_\nu\in U_r'$ for $i\neq i_0$ with $-r\leq
s\leq k$. It is clear that we have
\begin{equation*}
\begin{split}
\jiao\limits_{i\neq i_0 \atop 1\leq i\leq n-1
}Y_{\la_i-\la_{i+1},i}&=\{\nu\in
Z\mid \nu_i-\nu_{i+1}=\la_i-\la_{i+1},\ 1\leq i\leq n-1,\ i\neq
i_0\}\\
&=\{\nu\in X_k\mid \nu_i-\nu_{i+1}=\la_i-\la_{i+1},\ 1\leq i\leq
n-1\}\\
&=\{\la\}.
\end{split}
\end{equation*}
It follows that
\begin{equation*}
\begin{split}
\prod_{i\neq i_0\atop 1\leq i\leq n-1}\sum_{\nu\in
Y_{\la_i-\la_{i+1},i}}\ttk_\nu &=\sum_{\nu\in\jiao\limits_{i\neq
i_0,\,1\leq i\leq n-1
}Y_{\la_i-\la_{i+1},i}}\ttk_\nu\\
&=\ttk_\la\in U_r'.
\end{split}
\end{equation*}
Hence, the result follows.
\end{proof}

Note by the proof of the above theorem that we have in fact
proved $\zr(U_\sZ^0(\frak{sl}_n))=U_\sZ^0(n,r)$.
 It is natural to ask what is
the image of $\tu_k(\frak{sl}_n)$ under the map $\zr$. The following
theorem answer the question.
\begin{Thm}
If $(n,l')=1$, i.e., the integers $n$ and $l'$ are relatively prime,
then $\zr(\tu_k^0(\frak{sl}_n))=\tu_k^0(n,r)$. In particular, the
homomorphism $\zr:\tu_k(\frak{sl}_n)\ra \tu_k(n,r)$ is surjective.
\end{Thm}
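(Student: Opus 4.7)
The plan is to exploit the central identity $\ttk_1\ttk_2\cdots\ttk_n=\ep^r\cdot 1$ inside $\tu_k^0(n,r)$. This identity holds because, by Lemma~\ref{the property of Ur}(2), each $\ttk_i$ acts on the basis element $\ttk_\la$ $(\la\in\La(n,r))$ by $\ep^{\la_i}$, so the product acts by $\ep^{\sg(\la)}=\ep^r$ on every $\ttk_\la$; since $\sum_\la\ttk_\la=1$, the identity follows.

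Writing $\tilde\ttk_i:=\zr(\sfK_i)=\ttk_i\ttk_{i+1}^{-1}$ for $1\leq i\leq n-1$, I would compute the telescoping product
$$\tilde\ttk_1\tilde\ttk_2^{\,2}\cdots\tilde\ttk_{n-1}^{\,n-1}
=\ttk_1\ttk_2\cdots\ttk_{n-1}\ttk_n^{-(n-1)}
=(\ttk_1\ttk_2\cdots\ttk_n)\ttk_n^{-n}
=\ep^r\ttk_n^{-n},$$
so that $\ttk_n^{\pm n}\in\zr(\tu_k^0(\mathfrak{sl}_n))$. Because $\ttk_n^{l'}=1$ in $\tu_k^0(n,r)$ and $(n,l')=1$, one can pick integers $u,v$ with $un+vl'=1$, whence $\ttk_n=\ttk_n^{un+vl'}=(\ttk_n^n)^u\in\zr(\tu_k^0(\mathfrak{sl}_n))$. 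A downward induction on $i$ via $\ttk_i=\tilde\ttk_i\ttk_{i+1}$ then places every $\ttk_i^{\pm 1}$ into $\zr(\tu_k^0(\mathfrak{sl}_n))$. Since these elements generate $\tu_k^0(n,r)=\zr(\tu_k^0(n))$, this yields the nontrivial inclusion; the reverse inclusion is immediate from $\zr(\sfK_i)=\ttk_i\ttk_{i+1}^{-1}\in\tu_k^0(n,r)$.

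For the ``In particular'' statement, I would combine the triangular decomposition $\tu_k(n,r)=\tu_k^-(n,r)\tu_k^0(n,r)\tu_k^+(n,r)$ with the fact that the generators $E_i,F_i$ ($1\leq i\leq n-1$) of $\tu_k(\mathfrak{sl}_n)$ map under $\zr$ to the generators $\tte_i,\ttf_i$ of $\tu_k^\pm(n,r)$; together with the first part, this covers all three tensor factors.

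The only real content is the passage from $\ttk_n^n$ to $\ttk_n$, which is exactly where the hypothesis $(n,l')=1$ is used; without it one could only recover the subalgebra generated by $\ttk_n^d$ where $d=\gcd(n,l')$, which is a proper subalgebra of $\tu_k^0(n,r)$. I expect no further obstacles beyond this elementary number-theoretic step.
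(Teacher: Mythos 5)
Your proposal is correct and follows essentially the same route as the paper: both arguments rest on the relation $\ttk_1\ttk_2\cdots\ttk_n=\ep^r$, extract an $n$-th power of a single $\ttk_i$ (the paper isolates $\ttk_1^n$, you isolate $\ttk_n^{-n}$) from a product of the $\tilde\ttk_j=\zr(\sfK_j)$, and then use $(n,l')=1$ together with $\ttk_i^{l'}=1$ to recover each $\ttk_i$, concluding via the triangular decomposition $\tu_k(n,r)=\tu_k^-(n,r)\tu_k^0(n,r)\tu_k^+(n,r)$. The only cosmetic differences are that you derive the central identity directly from \ref{the property of Ur}(2) where the paper cites \cite[2.1]{DG2}, and that you telescope from the opposite end.
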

\begin{proof}
Let $s=\zr(\tu_k(\frak{sl}_n)),\ s^+=\zr(\tu_k^+(\frak{sl}_n)), \
s^-=\zr(\tu_k^-(\frak{sl}_n))$ and $s^0=\zr(\tu_k^0(\frak{sl}_n))$.
Then $\tu_k^+(n,r)=s^+$ and $\tu_k^-(n,r)=s^-$. Hence it is enough to
prove $\ttk_i\in s^0$ for all $i$. Since
$\ttk_1\ttk_2\cdots\ttk_n=\ep^r$ by \cite[2.1]{DG2}, we have
$\ttk_1^n=\ep^r\ti{\ttk}_1^{n-1}
\ti{\ttk}_2^{n-2}\cdots\ti{\ttk}_{n-2}^2\ti{\ttk}_{n-1}\in s^0.$
Since $(n,l')=1$, there is some integers $a,b$ such that $na+bl'=1$.
So $\ttk_1=\ttk_1^{na+bl'}=\ttk_1^{na}\in s^0$. Then
$\ttk_1^{-1}=\ttk_1^{l'-1}\in s^0.$ Hence
$\ttk_{i+1}^{-1}=\ttk_1^{-1}\ti{\ttk}_1\ti{\ttk}_2\cdots\ti{\ttk}_i\in
s^0$ for all $i$. It follows $\ttk_{i+1}=\ttk_{i+1}^{-(l'-1)}\in
s^0$ for all $i$. Hence the result follows.
\end{proof}
\begin{Rem}Note that if $(n,l')\neq 1$, the above theorem may be
not true. For example, Suppose $n=l'=3=l$ and $r\geq 4$. Then
$\ti{\ttk}_2=\ttk_2\ttk_3^{-1}=\ep^{-r}\ttk_1\ttk_2^2$ since
$\ttk_1\ttk_2\ttk_3=\ep^r.$ Since $\ttk_2^3=\ttk_2^l=1$, we have
$\ti{\ttk}_1=\ttk_1\ttk_2^{-1}=\ttk_1\ttk_2^2$. Hence
$\ti{\ttk}_2=\ep^{-r}\ti{\ttk}_1.$ It follows that $\zr(\tu_k(\frak
{sl}_n)^0)=\text{span}\{1,\ti{\ttk}_1,\ti{\ttk_1}^2\}$. So
dim$\zr(\tu_k^0(\frak{sl}_n))\leq 3$. But, by \cite[9.2]{DFW},  dim\,$\tu_k^0(n,r)=9$.
Hence, in general, $\zr(\tu_k^0(\frak{sl}_n))\neq \tu_k^0(n,r).$
Thus,, it is very likely that $\zr:\tu_k(\frak{sl}_n)\ra \tu_k(n,r)$ is not surjective.
\end{Rem}

\end{document}